\newtheorem{theorem}{Theorem}[section]
\newtheorem{defn}{Definition}[section]
\newtheorem{lem}{Lemma}[section]
\newtheorem{lemma}[lem]{Lemma}
\newtheorem{observe}{Observation}[section]
\newtheorem{prop}{Proposition}[section]
\newtheorem{remark}{Remark}[section]
\newtheorem{cor}{Corollary}[section]%[subsection]
\newcommand{\br}{\ensuremath{\mathbf{r}}}
\newcommand{\bc}{\ensuremath{\mathbf{c}}}
\newcommand{\cR}{\ensuremath{\mathcal{R}}}
\newcommand{\cS}{\ensuremath{\mathcal{S}}}
\begin{document}

\title{\bf On Combinatorial Rectangles with Minimum $\infty-$Discrepancy}

\author{Chunwei Song\footnote{School of Mathematical Sciences \& LMAM, Peking University, Beijing 100871, P.R. China {\tt csong@math.pku.edu.cn},
{\tt byao@pku.edu.cn}}, Bowen Yao\footnotemark[2] \thanks{The authors were partially supported by NSF of China grant \#11771246.}}

\date{}

\maketitle

\begin{abstract}
A combinatorial rectangle may be viewed as a matrix whose entries are all $\pm1$.
The discrepancy of an $m \times n$ matrix is the maximum among the absolute values of its $m$
row sums and $n$ column sums.
In this paper, we investigate combinatorial rectangles with minimum discrepancy
($0$ or $1$ for each line depending on the parity).
%Here we compute the number of combinatorial matrices with minimum $L^\infty$-discrepancy.
Specifically, we get explicit formula for the number of matrices with minimum $L^\infty$-discrepancy
up to 4 rows, and establish the order of magnitude of the number of such matrices
with $m$ rows and $n$ columns while $m$ is fixed and $n$ approaches infinity.
By considering the number of column-good matrices with a
fixed row-sum vector, we have developed a theory of
decreasing criterion on based row-sum vectors with majorization relation,
which turns out to be a helpful tool in the proof of our main theorems.
\end{abstract}

\noindent \vskip3mm \noindent 2010 {\it MSC}:
Primary: 05A05,05A16; Secondary: 05A15,11K38

\noindent \vskip3mm \noindent \emph{Keywords: } Minimum Discrepancy; Combinatorial Rectangle; Matrix enumeration; Majorization Relation

% --------------------
\section{Introduction}
% --------------------

Considered with respect to various kinds of mathematical objects, such as functions, groups, natural numbers and other summable or integrable  mathematical objects,
discrepancy is widely interested in the fields of analytic number theory, numerical analysis, P.D.E., graph theory and so on \cite{LSV86}.

In discrete mathematics, especially in combinatorics, discrepancy is taken over a finite set $\Omega$.
The $L^p$-discrepancy of a family of subsets $\mathscr{F}\subset2^\Omega$
with respect to a coloring $\chi:\Omega\rightarrow\{1,-1\}$ is defined to be \cite{BJS95},
$$\textrm{disc}_p(\mathscr{F},\chi)=\left(\frac1{|\mathscr{F}|}\sum_{X\in\mathscr{F}}\left|\chi(X)\right|^p\right)^{\frac1p}
=\left(\frac1{|\mathscr{F}|}\sum_{X\in\mathscr{F}}\left|\sum_{x\in X}\chi(x)\right|^p\right)^{\frac1p}.$$
%here $\chi:\Omega\rightarrow\{1,-1\}$ is a 2-coloring defined on a finite set $\Omega$, and family of subsets $\mathscr{F}\subset2^\Omega$,
%the characterization of a set $X\in\mathscr{F}$, $\chi(X)=\sum_{x\in X}\chi(x)$

%Combinatorial rectangles is an interesting topic for scholars, with discrepancy problems studied \cite{ADLS02}

A combinatorial rectangle is a bi-colored $m$ by $n$ grid with each square colored either red or blue \cite{SUN03}.
Discrepancy on combinatorial rectangles is studied in, say, \cite{ADLS02}. In the language above,
the family of subsets $\mathscr{F}$ is the set of all lines, i.e., rows and columns, and each subset $F$ is a row or a column.
A coloring of the grids $\chi:\Omega\rightarrow\{1,-1\}$ corresponds to filling the grids with $\pm1$.
Clearly, a combinatorial rectangle may simply be viewed a $\pm 1$ matrix.
%So our discussion of coloring of a rectangle can be replaced by coloring of a $\pm 1$ matrix.
The enumeration of nonnegative integer matrices has been a popular topic in combinatorics,
with results and useful techniques of enumerations of sparse or dense matrices given
by \cite{Hen95}, \cite{BBK72}, \cite{GC77}, \cite{AH09}, etc.% on the case of enumeration of sparse or dense matrices.

In this paper, we investigate combinatorial rectangles with minimum discrepancy ($0$ or $1$ for each line depending on the parity).
This is of particular interests because of the attainment of optimum balance (see \cite{CN05} or \cite{GD04}).
% We concentrate on the number of colorings when $n$ tends to infinity and we use enumeration technique to calculate and estimate the number of matrices who has a minimum discrepancy $1$ or $0$.

Specifically, the $L^\infty$-discrepancy of a $\pm1$ matrix $M$, which uniquely corresponds to a combinatorial rectangle, is as follows.
\begin{defn}
The $L^\infty$-discrepancy $\delta(M)$ of a $\pm1$ matrix $M=\{a_{ij}\}_{1\leq i\leq m}^{1\leq j\leq n}$ is defined as
$$\delta(M)= \lim_{p\rightarrow\infty}\left(\frac{1}{m+n} \left( \sum_{j=1}^n |\sum\limits_{i=1}^m a_{ij}|^p+ \sum_{i=1}^n |\sum\limits_{j=1}^n a_{ij}|^p\right)\right)^{\frac1p}
=\max\{\max_{1\leq j\leq n}|\sum_{i=1}^m a_{ij}|,\max_{1\leq i\leq m}|\sum_{j=1}^n a_{ij}|\},$$
i.e., $\delta(M)$ is the maximum value among all absolute values of row-sums and column-sums of
$M$.%, i.e., the diffenrence of the numbers of $+1$'s and $-1$'s.
\end{defn}

Let $A(m,n)$ denote the collection of all $\pm1$ matrices with $m$ rows and $n$ columns such that the discrepancy (i.e. the absolute value of the sum of
each row and column) is no more than $1$.
We say that a $\pm1$ matrix $M$ is ``good" if $M$ is in $A(m,n)$.

By checking at certain alternating arrangement pattern,
the minimum discrepancy is easily obtained.
Clearly, for any good matrix,
if the number of rows $m$ is even, the sum of $\pm1$'s in every column must be 0 with the number of $+1$'s and $-1$'s both $\frac n2$.
While if $m$ is odd,
the sum of every column is $1$ or $-1$ with the number of  $+1$'s and $-1$'s $\frac {m\pm1}2$ and $\frac {m\mp1}2$, respectively.
Let $\alpha(m,n)=|A(m,n)|$. It is an intriguing question to
estimate $\alpha(m,n)$, the total number of good matrices.

The case of $m=3$ is investigated in \cite{Hen95}.

%\section{x}
\subsection{Preliminaries and Main results}

First to warm up we observe the following fact which is straightforward
while being helpful to understand the mechanism.

\begin{observe}\label{observe:1}
In case of one-row or two-row matrices, clearly,
$$\alpha(1,n)=\alpha(2,n)=\binom{2\lfloor\frac{n+1}{2}\rfloor}{\lfloor\frac{n+1}{2}\rfloor}.$$
\end{observe}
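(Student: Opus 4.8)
The plan is to treat the two equalities separately, first computing $\alpha(1,n)$ directly and then reducing the $m=2$ case to it via a bijection.

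For $m=1$, I would observe that a good matrix is just a row $(a_1,\ldots,a_n)\in\{+1,-1\}^n$: each column sum is a single entry $a_j$, so $|a_j|=1$ makes every column constraint vacuous, and the only condition is $|\sum_{j=1}^n a_j|\le 1$. Writing $k$ for the number of $+1$ entries, the row sum is $2k-n$, so $|2k-n|\le 1$ forces $k=n/2$ when $n$ is even and $k\in\{(n-1)/2,(n+1)/2\}$ when $n$ is odd. Thus $\alpha(1,n)=\binom{n}{n/2}$ for even $n$ and $\alpha(1,n)=\binom{n}{(n-1)/2}+\binom{n}{(n+1)/2}=2\binom{n}{(n-1)/2}$ for odd $n$. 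The last step is to verify that both cases match $\binom{2\lfloor(n+1)/2\rfloor}{\lfloor(n+1)/2\rfloor}$: for even $n$ this is immediate since $\lfloor(n+1)/2\rfloor=n/2$, and for odd $n$, with $\lfloor(n+1)/2\rfloor=(n+1)/2$, Pascal's rule gives $\binom{n+1}{(n+1)/2}=\binom{n}{(n-1)/2}+\binom{n}{(n+1)/2}=2\binom{n}{(n-1)/2}$, as needed.

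For $m=2$, writing the rows as $(a_j)$ and $(b_j)$, each column sum $a_j+b_j$ lies in $\{-2,0,2\}$, so $|a_j+b_j|\le 1$ forces $b_j=-a_j$; hence the bottom row is the negation of the top row and the matrix is completely determined by its first row. The two row sums then become $\sum_j a_j$ and $-\sum_j a_j$, so the row constraint reduces to $|\sum_j a_j|\le 1$ --- precisely the condition defining a good $1\times n$ matrix. This sets up a bijection between $A(2,n)$ and $A(1,n)$, whence $\alpha(2,n)=\alpha(1,n)$, and the common value is the one computed above.

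I do not expect any genuine obstacle: the result follows by unwinding the definition of ``good'' in these two degenerate cases, where the column constraints are either vacuous ($m=1$) or rigidly determine the second row ($m=2$). The only mildly delicate point is the parity bookkeeping that collapses the even-$n$ and odd-$n$ answers into the single closed form $\binom{2\lfloor(n+1)/2\rfloor}{\lfloor(n+1)/2\rfloor}$, which the Pascal's-rule identity above takes care of.
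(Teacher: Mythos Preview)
Your proposal is correct and takes essentially the same approach as the paper: the paper simply records $\alpha(1,n)=\binom{n}{\lfloor n/2\rfloor}\cdot 2^{\chi(2\nmid n)}$ and asserts $\alpha(2,n)=\alpha(1,n)$ ``trivially,'' whereas you supply the details behind both steps (the row-sum count for $m=1$ and the forced-negation bijection for $m=2$) and additionally verify via Pascal's rule that the two parity cases collapse into the stated closed form.
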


\begin{proof}
Note that $\alpha(1,n) = {n \choose \lfloor \frac{n}{2} \rfloor} \cdot 2^{\chi(2 \nmid n)}$,
where by convention $\chi(P)$ is to be taken equal to one if the statement $P$ is true and zero
if $P$ is false. Moreover, trivially $\alpha(2,n)=\alpha(1,n)$.
\end{proof}

%\begin{cases}\binom{n}{\frac{n}{2}},~2|n\\ \binom{n+1}{\frac{n+1}{2}},~2\nmid n\end{cases}$$
%The 1 added stands for the auxiliary entry in case of $2\nmid n$.

The following results are useful in that they help
reduce discussions into neighboring cases.
%neighboring cases.
%这里应该是化归为m和n同奇偶的情况

\begin{observe}\label{0}
If $2|m$, then $$\alpha(m,n) \leq \alpha(m-1,n).$$
Moreover, if $2|m,2|n$, then $$\alpha(m,n)=\alpha(m-1,n)=\alpha(m,n-1)\leq\alpha(m-1,n-1).$$
\end{observe}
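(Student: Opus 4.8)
The plan is to exploit one elementary operation---deleting (or appending) a single row---together with transposition.

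\emph{The inequality $\alpha(m,n)\le\alpha(m-1,n)$ for even $m$.} Let $M\in A(m,n)$. Since $m$ is even, every column of $M$ sums to $0$. Let $\phi(M)$ be the $(m-1)\times n$ matrix obtained by deleting the last row of $M$. Its row sums are those of rows $1,\dots,m-1$ of $M$, unchanged and still of absolute value at most $1$; its $j$-th column sum equals $0-a_{mj}=-a_{mj}\in\{+1,-1\}$, again of absolute value $1$. Hence $\phi(M)\in A(m-1,n)$. The map $\phi$ is injective: if two good $m\times n$ matrices agree on the first $m-1$ rows, then, all their column sums being $0$, the last entry of column $j$ must equal $-\sum_{i=1}^{m-1}a_{ij}$ in both, so the matrices coincide. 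Therefore $\alpha(m,n)=|A(m,n)|\le|A(m-1,n)|=\alpha(m-1,n)$.

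\emph{The equalities when $2\mid m$ and $2\mid n$.} In this case I claim $\phi$ is a bijection. Given $M'\in A(m-1,n)$, define $M$ by appending the row $a_{mj}:=-\sum_{i=1}^{m-1}a_{ij}$. Because $m-1$ is odd, each column sum of $M'$ is $\pm1$, so every appended entry $a_{mj}$ lies in $\{+1,-1\}$; by construction every column of $M$ sums to $0$; rows $1,\dots,m-1$ retain their old sums; and the sum of the appended row equals $-\sum_{i=1}^{m-1}(\text{row-}i\text{ sum of }M')$, which is $0$ because $n$ even forces every row sum of $M'$ to vanish. Thus $M\in A(m,n)$, and $M\mapsto M$ is a two-sided inverse of $\phi$, giving $\alpha(m,n)=\alpha(m-1,n)$. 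Transposition is a bijection between $A(m,n)$ and $A(n,m)$, so $\alpha$ is symmetric in its arguments; applying the equality just proved to the pair $(n,m)$, both even, yields $\alpha(m,n)=\alpha(n,m)=\alpha(n-1,m)=\alpha(m,n-1)$. Finally, since $m$ is even, the first part of the statement applied with $n$ replaced by the (odd) value $n-1$ gives $\alpha(m,n-1)\le\alpha(m-1,n-1)$. Chaining these three identities and the inequality proves the ``moreover'' clause.

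\emph{Where the work is.} No step is deep, but the argument is parity-sensitive: the reconstruction of the deleted row uses \emph{both} that $m-1$ is odd (so the appended entries are genuine $\pm1$ values) and that $n$ is even (so the appended row sums to $0$ rather than to $\pm2$), and one must check that these are precisely the hypotheses available in each invocation. The only real point of care is therefore the bookkeeping of which line sums are $0$ and which are $\pm1$, which is exactly the dichotomy recorded in the introduction---an even line-count forces the corresponding line sums to be $0$, an odd line-count forces them to be $\pm1$.
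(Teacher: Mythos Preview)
Your proof is correct and follows essentially the same approach as the paper: delete a row to obtain an injection $A(m,n)\hookrightarrow A(m-1,n)$, then show the inverse (appending the forced row) lands in $A(m,n)$ precisely when $n$ is also even, and invoke row--column symmetry for the remaining equality. You are in fact slightly more careful than the paper in two places---you explicitly verify that the appended entries lie in $\{\pm1\}$ (using that $m-1$ is odd), and you spell out that the final inequality $\alpha(m,n-1)\le\alpha(m-1,n-1)$ is just the first part applied with the odd value $n-1$---but these are refinements of the same argument, not a different route.
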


\begin{proof}
Suppose $2|m$. Note that deletion of the uppermost row from any arbitrary matrix in $A(m,n)$ results in a
uniquely decided matrix in $A(m-1,n)$. This may not be revertible, though, as completing every column of
a matrix in $A(m-1,n)$ with opposite-signed sum of the column entries might generate an extra row of
unwanted row sum. Thus $\alpha(m,n)\leq\alpha(m-1,n)$ for even $m$.

In case $m$ and $n$ are both even, the completed $m$th row will not have row sum problem. In fact,
for any matrix $M=\{a_{ij}\}_{1\leq i\leq m-1}^{1\leq j\leq n}\in A(m-1,n)$,
an extra row ($a_{mj}=-\sum_{i=1}^{m-1}a_{ij}, 1 \leq j \leq n$) is added to the top of $M$ according
to the above map. Because $n$ is even, every row sum of $M$ must be zero, and hence,
$$\begin{aligned}
\sum_{j=1}^na_{mj}&=\sum_{j=1}^n (-\sum_{i=1}^{m-1}a_{ij}) = -\sum_{i=1}^{m-1} \sum_{j=1}^n a_{ij}=-\sum_{i=1}^{m-1}0=0.
\end{aligned}$$
So the enhanced matrix $M'=\{a_{ij}\}_{1\leq i\leq m}^{1\leq j\leq n}$ belongs to $A(m,n)$ and
the map described above is a bijection. Thus when $m$ and $n$ are both even
we have $\alpha(m,n)=\alpha(m-1,n)$, and by symmetry $\alpha(m,n)=\alpha(m,n-1)$ in this case.

%Observe the natural mapping from $A(m,n)$ to $A(m-1,n)$ which we delete the uppermost row from the matrix in $A(m,n)$, as well as each $\pm1$
%entry in the deleted row equals to the opposite number of the sum of the entries in the same column. It results that the mapping is an injection, thus %$\alpha(m,n)\leq\alpha(m-1,n)$.

\end{proof}

\begin{prop}\label{prop:geq}
If $2 \nmid m$, i.e., $m-1$ is even, then $$\alpha(m,n)\geq\alpha(1,n)\alpha(m-1,n).$$
\end{prop}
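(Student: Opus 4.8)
The plan is to exhibit an explicit injection from $A(1,n)\times A(m-1,n)$ into $A(m,n)$ by vertical concatenation, so that $\alpha(m,n)=|A(m,n)|\ge |A(1,n)|\cdot|A(m-1,n)|=\alpha(1,n)\alpha(m-1,n)$.

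First I would fix a good row vector $v=(v_1,\dots,v_n)\in A(1,n)$ and a good matrix $M=\{a_{ij}\}_{1\le i\le m-1}^{1\le j\le n}\in A(m-1,n)$, and define $\Phi(v,M)=M'$ to be the $m\times n$ matrix obtained by placing $v$ as the top row and $M$ as the remaining $m-1$ rows, i.e. $a'_{1j}=v_j$ and $a'_{ij}=a_{i-1,j}$ for $2\le i\le m$. The map $\Phi$ is visibly injective: from $M'$ one recovers $v$ as its first row and $M$ as the matrix of its last $m-1$ rows.

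The one thing that actually requires checking is that $M'\in A(m,n)$, and here is where the hypothesis $2\nmid m$ enters. Since $m-1$ is even, the remark preceding the statement (on the alternating pattern / parity of column sums) tells us that every column sum of $M\in A(m-1,n)$ equals $0$; hence each column sum of $M'$ equals $\sum_{i=1}^m a'_{ij}=v_j+0=v_j\in\{+1,-1\}$, so $|\sum_i a'_{ij}|=1\le 1$. For the row sums: rows $2,\dots,m$ of $M'$ are exactly the rows of $M$, whose sums already satisfy $|\sum_j a_{ij}|\le 1$ since $M\in A(m-1,n)$, while the first row has sum $\sum_j v_j$ with $|\sum_j v_j|\le 1$ because $v\in A(1,n)$. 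Thus every line sum of $M'$ has absolute value at most $1$, so $M'\in A(m,n)$.

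Combining injectivity of $\Phi$ with the fact that its image lies in $A(m,n)$ gives the claimed inequality. There is no real obstacle: the only point needing care is invoking the parity fact that even-row good matrices have all column sums zero, which guarantees that prepending an arbitrary good single row does not push any column sum past $1$; everything else is a routine counting-by-injection argument. (One could equally append $v$ as the bottom row, or by symmetry concatenate horizontally when $n$ is odd; the top-row version suffices.)
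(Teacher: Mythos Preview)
Your proof is correct and is essentially identical to the paper's own argument: both stack an arbitrary good row from $A(1,n)$ on top of an arbitrary matrix from $A(m-1,n)$, use the fact that $m-1$ even forces all column sums of the latter to be zero, and conclude that the resulting $m\times n$ matrix lies in $A(m,n)$. The paper's version is a two-sentence sketch, while yours spells out the injectivity and the row/column-sum verification in full, but the content is the same.
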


\begin{proof}
Note that when $m-1$ is even, every column of an arbitrarily chosen matrix $M$ in $A(m-1,n)$ has discrepancy 0.
Therefore any row matrix
from $A(1,n)$ adding to the top of $M$ will result in a uniquely decided matrix in $A(m,n)$.
\end{proof}

In this paper we have done the followings. For general $n$, we evaluate the exact values of $\alpha(m,n)$
for $m$ up to 4, and analyze the structure of $A(3,n)$ by mapping the
matrices bijectively to certain 3-dimensional walks.
For fixed $m$, we determine the limit of the ratio of adjacent $\alpha(m,n)$ pairs.
Finally, we establish the magnitude of $\alpha(m,n)$ in general.

The next few results are proved in later sections.

%We have done some computations on the case of $m=3$ and $m=4$, and determined the limit of the ratio of adjacent $(m,n)$ pairs.
\begin{theorem}\label{1}If $n=2k$ is even, then
$$\alpha(3,2k)=\sum_{i=0}^{k}\binom{n}{k-i,k-i,i,i}\binom{2i}{i}=\binom{n}{k} \sum_{i=0}^{k}\binom{k}{i}^2\binom{2i}i.$$
If $n=2k+1$ is odd, then
$$\alpha(3,2k+1)=2\binom{n}{k}\sum_{i=0}^{k}\binom{k}{i}\left[\binom{2i+1}{i+1}\binom{k+1}i+2\binom{k+1}{i+1}\binom{2i+1}{i}\right].$$
\end{theorem}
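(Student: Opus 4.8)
The plan is to enumerate good $3\times n$ matrices by conditioning on the column-sum vector and the way each of the three possible column types is distributed. Since $m=3$ is odd, every column of a good matrix has entries summing to $+1$ or $-1$; a column summing to $+1$ has two $+1$'s and one $-1$, and there are three such columns (indexed by the position of the $-1$), and symmetrically three columns summing to $-1$. So a good $3\times n$ matrix is just a word of length $n$ over a six-letter alphabet, subject only to the three row-sum constraints. The strategy is: (i) parametrize by how many columns of each of the six types occur; (ii) write the number of arrangements as a multinomial coefficient; (iii) impose the row-sum conditions to cut the six free parameters down to a manageable sum; (iv) simplify the resulting double or triple sum into the claimed single sum via a Vandermonde-type identity.

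\textbf{Even case $n=2k$.} Here each row sum must be $0$. Let me name the column types: $R_t$ ($t\in\{1,2,3\}$) is the column with a $-1$ in row $t$ and $+1$ elsewhere (sum $+1$); $B_t$ is its negative (sum $-1$). Say $R_t$ occurs $x_t$ times and $B_t$ occurs $y_t$ times, with $\sum x_t+\sum y_t=n$. The number of matrices with this type-profile is the multinomial $\binom{n}{x_1,x_2,x_3,y_1,y_2,y_3}$. The row-$i$ sum is $\sum_{t}(\text{contribution})$: row $i$ gets $+1$ from every $R_t$ with $t\ne i$ and from $B_i$, and $-1$ from $R_i$ and from every $B_t$ with $t\ne i$. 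Setting each of the three row sums to zero gives three linear equations; combined with $\sum x_t+\sum y_t = n$ these force, after a short computation, $x_t = y_t$ for each $t$ is \emph{not} quite it --- rather they force relations that leave essentially two free parameters, matching the double-indexed form $\binom{n}{k-i,k-i,i,i}\binom{2i}{i}$ in the statement. I would solve the linear system explicitly: writing $s=\sum x_t$, the equations read $s - 2x_i - 2\sum_{t\ne i} y_t + \text{(something)} = 0$; carefully done, one finds $x_t+y_t$ is the same for all $t$ only in a degenerate reading, so instead I expect the clean outcome that the profile is determined by one global parameter $i$ (with two types appearing $k-i$ times and the other configuration contributing the $\binom{2i}{i}$), and summing over $i$ from $0$ to $k$ gives the first formula. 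The second equality $\binom{n}{k}\sum_i\binom{k}{i}^2\binom{2i}{i}$ then follows by pulling out $\binom{n}{k}=\binom{2k}{k}$ and recognizing $\binom{2k}{k-i,k-i,i,i}/\binom{2k}{k} = \binom{k}{i}^2$ (a standard multinomial-to-binomial identity: $\binom{2k}{k-i,k-i,i,i} = \binom{2k}{k}\binom{k}{i}^2$, since $\binom{2k}{k-i,k+i}\binom{k+i}{i}\binom{k-i}{i}$ collapses appropriately).

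\textbf{Odd case $n=2k+1$.} Now each row sum must be $\pm1$. The same type-profile setup applies, but the three linear constraints now each have a free sign, giving $2^3=8$ sign patterns; by an overall $\pm$ symmetry (negate the whole matrix) these pair up, and one checks which patterns are actually realizable given $n$ odd. This is where the factor $2$ out front and the two-term bracket $\binom{2i+1}{i+1}\binom{k+1}{i}+2\binom{k+1}{i+1}\binom{2i+1}{i}$ come from: the ``all three row sums equal'' pattern (say all $+1$) contributes one family of profiles, while the ``two row sums one sign, one the other'' pattern (three choices of which row is the odd one out, hence a coefficient related to $3$, though it will reorganize) contributes another, and after simplification these coalesce into the displayed bracket with its coefficients $1$ and $2$. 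Again I would solve each linear system, express the count as a multinomial, factor out $\binom{n}{k}$, and reduce to the single sum over $i$.

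\textbf{Main obstacle.} The genuinely delicate part is the combinatorial bookkeeping in the odd case: correctly identifying which of the eight sign-patterns for the row sums are feasible, avoiding double-counting profiles that satisfy more than one pattern, and then massaging the resulting sum of several multinomial expressions into exactly the two-term bracket with coefficients $1$ and $2$. The even case is comparatively routine once the linear system is solved, with the only non-trivial step being the multinomial identity $\binom{2k}{k-i,k-i,i,i}=\binom{2k}{k}\binom{k}{i}^2$. A possible alternative route for both cases --- which I would keep in reserve as a cross-check --- is the bijection with $3$-dimensional lattice walks alluded to in the paper's introduction: each column is a step in one of six directions, good matrices correspond to closed (even case) or near-closed (odd case) walks, and the stated formulas can be verified by a transfer-matrix / generating-function computation. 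But the direct multinomial count should be the cleanest path to the exact closed forms as written.
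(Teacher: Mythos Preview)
Your six-column-type approach is valid and is genuinely different from the paper's route. The paper does \emph{not} parametrize by the six full column types $R_t,B_t$; instead it classifies columns by their \emph{top two} entries (four types: $\binom{+}{+},\binom{-}{-},\binom{+}{-},\binom{-}{+}$ with multiplicities $x,y,z,w$), and then counts the ways to fill in row~3. In the even case this gives $x=y$, $z=w$, and the row-3 completion contributes a factor $\binom{2z}{z}$, which is exactly where the $\binom{2i}{i}$ in the formula comes from. In the odd case the same top-two classification produces eight $(x-y,z-w,u-v)$ patterns (one per row-sum sign vector), and the paper simply writes out all eight multinomial contributions and collects terms.

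Your setup would also work, but you second-guess yourself at the wrong moment. In the even case the three row-sum equations really do force $x_t=y_t$ for every $t$ (add any two of the three equations); together with $\sum_t 2x_t=2k$ this leaves \emph{two} free parameters, say $x_1,x_2$ with $x_3=k-x_1-x_2$, so your count is the double sum
\[
\alpha(3,2k)=\sum_{x_1+x_2+x_3=k}\binom{2k}{x_1,x_1,x_2,x_2,x_3,x_3}.
\]
This is not yet the single-indexed formula in the statement; to get there, fix $x_3=k-i$ and sum over $x_1+x_2=i$ using Vandermonde $\sum_{x_1}\binom{i}{x_1}^2=\binom{2i}{i}$, which collapses the inner sum to $\binom{2k}{k-i,k-i,i,i}\binom{2i}{i}$. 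So the ``one global parameter $i$'' you were hoping for does not emerge from the linear system directly---it appears only after this Vandermonde step, which is precisely the step the paper's top-two-rows decomposition builds in automatically.

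What each approach buys: the paper's asymmetric (rows $1,2$ vs.\ row $3$) decomposition lands immediately on the single-sum form and makes the odd case a mechanical eight-term case split. Your symmetric six-type decomposition is conceptually cleaner and makes the connection to 3D lattice walks transparent (each column type is literally a unit step), but requires an extra Vandermonde collapse to reach the stated closed form, and in the odd case you will have $2^3$ sign patterns each giving a two-parameter family, which is more bookkeeping than the paper's version. Both are complete once carried through; the main thing to fix in your write-up is to stop hedging on $x_t=y_t$ and to recognize that the reduction to a single index is a separate algebraic step, not a consequence of the constraint count.
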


\begin{theorem}\label{n=4}
$$\alpha(4,n)=\binom{2\lceil \frac{n}{2} \rceil}{\lceil \frac{n}{2} \rceil} \sum_{i=0}^{\lceil \frac{n}{2} \rceil}\binom{\lceil \frac{n}{2} \rceil}{i}^2\binom{2i}i.$$
\end{theorem}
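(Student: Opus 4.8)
The plan is to reduce the four-row enumeration to the three-row enumeration already recorded in Theorem \ref{1}, using only the parity identities of Observation \ref{0}. Write $h=\lceil n/2\rceil$, so $h=k$ when $n=2k$ and $h=k+1$ when $n=2k+1$; in either case the asserted value is $\binom{2h}{h}\sum_{i=0}^{h}\binom{h}{i}^2\binom{2i}{i}$, which is precisely the right-hand side of the first identity of Theorem \ref{1} with $k$ replaced by $h$. Hence it suffices to establish $\alpha(4,n)=\alpha(3,2h)$, and then substitute $h=\lceil n/2\rceil$.

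\textbf{Even $n$.} When $n=2k$, both $m=4$ and $n$ are even, so the even--even clause of Observation \ref{0} gives $\alpha(4,2k)=\alpha(3,2k)=\alpha(3,2h)$, and Theorem \ref{1} (even case) supplies the closed form.

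\textbf{Odd $n$.} When $n=2k+1$, Observation \ref{0} only yields the one-sided bound $\alpha(4,n)\le\alpha(3,n)$, so I would instead pass through $n+1$ columns. Applying the even--even clause of Observation \ref{0} to the pair $(m,n')=(4,2k+2)$, both entries being even, gives simultaneously $\alpha(4,2k+2)=\alpha(4,2k+1)$ and $\alpha(4,2k+2)=\alpha(3,2k+2)$. Therefore $\alpha(4,2k+1)=\alpha(3,2k+2)=\alpha(3,2h)$, and Theorem \ref{1} with its parameter equal to $k+1=h$ yields the stated expression. Combining the two cases finishes the proof.

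I do not expect a genuine obstacle here: the whole argument is a short chain of equalities from Observation \ref{0} and Theorem \ref{1}. The one place that needs care is the odd case, where one must not try to identify $\alpha(4,n)$ with $\alpha(3,n)$ directly — the column-completion map $A(3,n)\to A(4,n)$ can force the appended fourth row to have sum $\pm2$ when $n$ is odd — so the argument is routed through $A(4,n+1)$, where the completed row automatically has zero sum and the bijection goes through. As an independent sanity check one can also count $A(4,n)$ from scratch by sorting the $\binom{4}{2}=6$ admissible column types into three complementary pairs; the row-discrepancy constraints then force the two types in each pair to occur equally often when $n$ is even, which gives a multinomial sum for $\alpha(4,n)$ that one verifies is equal to the closed form above, but this route requires an extra binomial identity that the reduction avoids.
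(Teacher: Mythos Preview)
Your argument is correct and is essentially the same as the paper's: the paper states the corollary $\alpha(4,2k)=\alpha(4,2k-1)=\alpha(3,2k)$ and proves it ``By Observation \ref{0} and Theorem \ref{1},'' which is exactly the chain of equalities you route through (with a harmless shift of index in the odd case). Your extra remark about why one must pass through $n+1$ rather than identify $\alpha(4,n)$ with $\alpha(3,n)$ directly for odd $n$ is a helpful clarification not spelled out in the paper.
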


\begin{theorem}\label{thm:bij3}
The exists a bijection between $A_{3, n}$ and the collection of $n$-step walks that start from the origin and end at either the origin (when
$n$ is even), or one of $\{(1,0,0),(-1,0,0),(0,1,0),(0,-1,0),(0,0,1),(0,0,-1),\\
(1,1,1), (-1,-1,-1)\}$ (when $n$ is odd).
\end{theorem}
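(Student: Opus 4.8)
The plan is to set up an explicit column-by-column encoding of the matrices in $A(3,n)$ (written $A_{3,n}$ in the statement) as $n$-step walks on $\mathbb{Z}^3$ whose steps lie in $\{\pm e_1,\pm e_2,\pm e_3\}$, where $e_1,e_2,e_3$ is the standard basis. Since $m=3$ is odd, by the parity discussion following the definition of $A(m,n)$ every column of a good matrix has sum $+1$ or $-1$; the columns with sum $+1$ are the three vectors whose unique $-1$ lies in a single row — call the one with $-1$ in row $i$ by $P_i^{+}$ — and the columns with sum $-1$ are $P_i^{-}:=-P_i^{+}$. Reading a matrix $M\in A(3,n)$ from left to right and replacing a column equal to $P_i^{\pm}$ by the step $\pm e_i$ produces an $n$-step walk $W(M)$ from the origin. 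The claim is that $M\mapsto W(M)$ is the bijection asserted by the theorem.

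The key observation is an affine identity between the endpoint of $W(M)$ and the row sums of $M$. Let $(d_1,d_2,d_3)$ be the endpoint of $W(M)$ and put $D=d_1+d_2+d_3$. Counting how many columns of each of the six admissible types occur, and using that $P_i^{\pm}$ contributes $\mp1$ to row $i$ and $\pm1$ to each of the other two rows, one gets that the $j$th row sum of $M$ equals $D-2d_j$ for $j=1,2,3$; moreover $D\equiv n\pmod 2$. Hence goodness of $M$ — all three row sums equal $0$ when $n$ is even, all in $\{+1,-1\}$ when $n$ is odd — becomes a condition on the endpoint alone. If $n$ is even, $D-2d_j=0$ for all $j$ forces $d_1=d_2=d_3=D/2$, whence $D=3D/2$, so $D=0$ and the endpoint is the origin. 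If $n$ is odd, writing $k$ for the number of rows of $M$ with sum $+1$ and solving $D-2d_j=1$ for those $k$ rows and $D-2d_j=-1$ for the other $3-k$, one gets $D=2k-3$; as $k$ runs over $0,1,2,3$ the endpoint is forced to be, respectively, $(-1,-1,-1)$, a permutation of $(-1,0,0)$, a permutation of $(1,0,0)$, or $(1,1,1)$ — exactly the eight points listed.

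For the inverse, note the encoding is reversible: a step $\pm e_i$ decodes to the column $P_i^{\pm}$, so any $n$-step walk $W$ of the above type yields a unique $3\times n$ matrix $M_W$ of $\pm1$'s, with $W(M_W)=W$ and $M_{W(M)}=M$. If $W$ ends at one of the allowed points, then $M_W$ is good: its column sums are $\pm1$ by construction, and by the identity its $j$th row sum is $D-2d_j$, which equals $0$ at the origin and lies in $\{+1,-1\}$ at each of the eight odd-$n$ endpoints (for instance $D=3$ with all row sums $1$ at $(1,1,1)$, and $D=1$ with row sums $1$ or $-1$ at a permutation of $(1,0,0)$). Thus $M\mapsto W(M)$ and $W\mapsto M_W$ are mutually inverse, which is the assertion.

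The construction itself is short; the one point that needs care — and the reason the naive map sending a column to itself viewed as a $\pm1$-step fails to produce the stated endpoint set — is to choose the encoding so that the walk's endpoint and $M$'s row-sum vector determine each other via the relation that the $j$th row sum equals $D-2d_j$. Once that relation is in place, no further restriction on the walk (such as confinement to a cone) is required: the admissible walks are cut out purely by the endpoint condition, which is what makes the correspondence a clean bijection. As a consistency check, for $n=2k$ the number of such closed coordinate-step walks in $\mathbb{Z}^3$ equals $\binom{2k}{k}\sum_i\binom{k}{i}^2\binom{2i}{i}$, in agreement with Theorem~\ref{1}.
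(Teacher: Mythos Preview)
Your proof is correct and uses exactly the same bijection as the paper: the paper's map $\varphi((x_j,y_j,z_j)^T)=\bigl(\tfrac{y_j+z_j}{2},\tfrac{z_j+x_j}{2},\tfrac{x_j+y_j}{2}\bigr)$ is precisely your encoding $P_i^{\pm}\mapsto\pm e_i$, and your identity $r_j=D-2d_j$ is just the inverse relation to the paper's $d_j=\tfrac12(r_{j'}+r_{j''})$. Your organization via that single affine identity is a bit cleaner than the paper's case-by-case coordinate check, but the argument is the same.
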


%\begin{theorem}\label{2}If $n=2k$ is even, then
%$$\alpha(4,2k)=\alpha(3,2k)=\sum_{i=0}^{k}\binom{n}{k-i,k-i,i,i}\binom{2i}{i}=\binom{n}{k}\sum_{i=0}^{k}\binom{k}{i}^2\binom{2i}i.$$ If $n=2k+1$ is odd, then
%$$\alpha(4,2k+1)=\alpha(4,2k+2)=2\binom{n}{k}\sum_{i=0}^{k}\binom{k}{i}\left[\binom{2i}{i}\binom{k+1}i+\binom{k+1}{i+1}\binom{2i+2}{i+1}\right].$$\end{theorem}

%We also study on the asymptotic behavior of $\alpha(m,n)$ while $n\rightarrow\infty$. The following results is proved in Section \ref{LB}.

\begin{theorem}\label{porp}For fixed integer $m$,
$$\lim_{n\rightarrow\infty}\frac{\alpha(m,n+2)}{\alpha(m,n)}
=\alpha(m,1)^2
={\binom{2\lfloor\frac{m+1}{2}\rfloor}{\lfloor\frac{m+1}{2}\rfloor}}^2.$$
\end{theorem}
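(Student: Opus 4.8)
The goal is to show that as $n \to \infty$ with $m$ fixed, consecutive ratios $\alpha(m,n+2)/\alpha(m,n)$ converge to $\alpha(m,1)^2 = \binom{2\lfloor\frac{m+1}{2}\rfloor}{\lfloor\frac{m+1}{2}\rfloor}^2$. My plan is to set this up as a transfer-matrix (column-by-column) problem. Think of building a good $m \times n$ matrix one column at a time; each column is a $\pm 1$ vector in $\{-1,1\}^m$ whose coordinate sum is the forced value ($0$ if $m$ even, $\pm 1$ if $m$ odd), and the only global constraint coupling the columns is that each of the $m$ row sums must land in $\{-1,0,1\}$ at the end. Track the state after $t$ columns by the vector of partial row sums $\mathbf{r}(t) \in \Z^m$. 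Since every admissible column has entries $\pm 1$, the coordinates of $\mathbf{r}(t)$ change parity every step, so the reachable states after $t$ columns all have coordinates of parity $t \bmod 2$; the number of good matrices $\alpha(m,n)$ is then $\mathbf{e}_{\text{start}}^{\top} T^n \mathbf{e}_{\text{end}}$ for an appropriate (infinite, but effectively finite) transfer operator $T$ on states, where we sum the end-vector over all terminal states with coordinates in $\{-1,0,1\}$.

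The key point is that the partial row sums cannot wander off: if some coordinate of $\mathbf{r}(t)$ has large absolute value, then since each remaining column changes it by $\pm 1$, it cannot return to $\{-1,0,1\}$ unless there are enough columns left — but more usefully, one shows by a reflection/ballot-type argument or by direct bounding that the total contribution of matrices whose partial sums ever exceed a fixed bound $B$ is exponentially smaller than $\alpha(m,n)$ itself. Hence $\alpha(m,n)$ is, up to a $(1+o(1))$ factor, governed by walks confined to the finite box $\{-B,\dots,B\}^m$, i.e. by a genuine finite transfer matrix $T_B$. By Perron–Frobenius (the relevant submatrix being primitive on the two parity classes, or $T_B^2$ being primitive on each class), $\alpha(m,n) = c\,\lambda^n(1+o(1))$ for the dominant eigenvalue $\lambda$ of the two-step operator, giving $\alpha(m,n+2)/\alpha(m,n) \to \lambda$. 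It then remains to identify $\lambda$. I would do this by noting that the map $T^2$ composed "per row" decouples in the bulk: far from the boundary of the box, adding two columns to a fixed row-sum pattern acts on each coordinate independently as the one-dimensional step operator (sum of two $\pm 1$'s, i.e. $-2,0,+2$ with multiplicities $1,2,1$), and the column-sum constraint contributes the combinatorial factor counting columns with the forced sum. The dominant eigenvalue of the one-row two-step walk on $\Z$ (or on a large interval) is $4 = \binom{2}{1}^2$ for $m$ odd-type single rows, and the relevant normalization across the forced column-sum structure collapses to exactly $\alpha(m,1)^2$; concretely one checks that the constant row-sum vector $\mathbf{r} \equiv 0$ (or $\mathbf{r}\equiv 1$, $m$ odd) is, after symmetrization, the Perron eigenvector and reads off $\lambda = \alpha(m,1)^2$.

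An alternative, and perhaps cleaner, route to the identification of the limit is via the "based row-sum vector" / majorization machinery the abstract advertises: let $g(\br, n)$ be the number of $m\times n$ column-good matrices with prescribed row-sum vector $\br$; then $\alpha(m,n) = \sum_{\br \in \{-1,0,1\}^m} g(\br,n)$, and one proves a recursion $g(\br, n+2) = \sum_{\br'} K(\br,\br') g(\br', n)$ where $K(\br,\br')$ counts pairs of admissible columns transforming $\br'$ into $\br$. The decreasing criterion under majorization should say that $g(\br,n)$ is maximized at the most-balanced $\br$ (all zeros, resp. all $\pm 1$), and that all the $g(\br,n)$ are within constant factors of each other and grow at the same exponential rate; summing the recursion and using that the total column count with the forced sum is $\alpha(m,1)$ per column, hence $\alpha(m,1)^2$ per two columns, pins the rate. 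The main obstacle in either approach is the same: rigorously controlling the boundary, i.e. proving that the constraint "partial sums stay bounded" does not change the exponential rate and that the leading eigenvalue is attained by the symmetric state — equivalently, that the Perron eigenvector of $T_B$ does not concentrate on the walls of the box as $B\to\infty$. I expect this localization/no-escape estimate, rather than the eigenvalue bookkeeping, to be where the real work lies; the majorization results proved earlier in the paper are presumably exactly the tool that makes this tractable.
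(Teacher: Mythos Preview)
Your second, ``cleaner'' route is essentially the paper's: peel off two columns to get $\alpha_\br(m,n+2)=\sum_{\mathbf{x},\mathbf{y}\in A(m,1)}\alpha_{\br-\mathbf{x}-\mathbf{y}}(m,n)$ and then compare the $\alpha(m,1)^2$ summands using the majorization machinery. The gap in your sketch is that ``within constant factors'' does not pin the limit of the ratio: if all you know is $c_1\le \alpha_{\br'}(m,n)/\alpha_\br(m,n)\le c_2$, the recursion only traps $\alpha_\br(m,n+2)/\alpha_\br(m,n)$ in $[c_1\alpha(m,1)^2,\,c_2\alpha(m,1)^2]$, and nothing prevents oscillation there. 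The paper closes this by proving the sharper Theorem~\ref{lim}, namely $\alpha_{\br'}(m,n)/\alpha_\br(m,n)\to 1$ for any two fixed row-sum vectors of the same parity. That in turn is reduced via Dalton transfers to a single adjustment $\br'=\br+(2,-2,0,\dots,0)$; both $\alpha_\br$ and $\alpha_{\br'}$ are expanded over the column-sum profile of the bottom $m-2$ rows with common weights $h_i$, and a Chebyshev-type inequality (Lemma~\ref{eineq}) together with the Vandermonde convolution strips the $h_i$ and leaves a ratio of binomials that visibly tends to~$1$. Once every summand is $\sim\alpha_\br(m,n)$, the theorem is immediate.

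Your first approach, however, contains a genuine error. The claim that matrices whose partial row sums ever leave $[-B,B]^m$ contribute an exponentially smaller share is backwards. Even conditioned on each row sum landing in $\{-1,0,1\}$ at time $n$, the partial row-sum process is a simple-random-walk bridge with maximum of order $\sqrt n$; for any \emph{fixed} $B$ it is the confined walks that are exponentially rare (per coordinate their count grows like $\bigl(2\cos\frac{\pi}{2B+2}\bigr)^n$, not $2^n$). Consequently the Perron--Frobenius eigenvalue of your truncated $T_B^2$ is strictly below $\alpha(m,1)^2$ for every finite $B$, and no finite-box transfer matrix recovers the correct rate. The right spectral picture is a random walk on $\Z^m$ with step set $A(m,1)$: the rate $\alpha(m,1)$ is the spectral radius of the convolution operator, not an isolated eigenvalue, and the polynomial correction $n^{-(m-1)/2}$ of Theorem~\ref{appro} comes from a local-limit-theorem mechanism rather than a spectral gap.
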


\begin{theorem}\label{appro}For fixed integer $m$,
$$\alpha(m,n)=\Theta\left(\frac{\alpha(m,1)^n}{n^{\lfloor\frac{m-1}{2}\rfloor+\frac12}}\right),$$
while $n$ tends to infinity, here $\alpha(2m-1,1)=\alpha(2m,1)=\binom{2m}m$.
\end{theorem}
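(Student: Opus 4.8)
The plan is to extract the asymptotics of $\alpha(m,n)$ from a generating-function / transfer-matrix description of the column-by-column construction of good matrices, and then read off both the exponential rate $\alpha(m,1)^n$ and the polynomial correction $n^{-(\lfloor (m-1)/2\rfloor + 1/2)}$ from the dominant singularity. Concretely, I would fix $m$ and think of a good $m\times n$ matrix as a walk of $n$ steps in $\Z^m$ (or rather in the quotient recording only the partial row-sum vector modulo the admissible window), where at step $j$ one appends a column $v_j\in\{\pm1\}^m$; the constraint is that every partial row-sum coordinate stays in $\{-1,0,1\}$ (for the relevant parity) and that the final row-sum vector is again ``balanced.'' The number of legal single columns from a given state is encoded by a finite transfer matrix $T$, so $\alpha(m,n) = \br^{\!\top} T^{\,n}\, \bc$ for suitable boundary vectors; equivalently $\sum_n \alpha(m,n) x^n$ is rational, with denominator $\det(I - xT)$. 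By Perron--Frobenius (the state graph is irreducible and aperiodic once one passes to a fixed parity class, and one restores the full sequence afterwards) the largest eigenvalue $\lambda$ of $T$ is simple and positive, giving $\alpha(m,n) = \Theta(\lambda^n)$ — but this only recovers the exponential factor, and in fact I expect $\lambda = \alpha(m,1)$, since a single column into the all-zero state contributes exactly $\alpha(m,1) = \binom{2\lceil m/2\rceil}{\lceil m/2\rceil}$ choices and that state dominates.

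To get the true order of magnitude, the polynomial factor $n^{-(\lfloor(m-1)/2\rfloor+1/2)}$, the transfer-matrix/rational-generating-function picture is too crude, because it can only produce powers of $n$ from eigenvalue multiplicities, whereas the correct exponent is a half-integer. This signals that the right tool is a continuous (local central limit) estimate rather than a discrete spectral one. So instead I would set up the count as a coefficient extraction of the form
\[
\alpha(m,n) \;=\; [\text{diagonal term}]\ \text{in}\ \Bigl(\,\textstyle\prod_{v\in\{\pm1\}^m}\text{something}\Bigr)^n,
\]
more precisely: write $\alpha(m,n)$ as the constant term (in $m$ formal variables $z_1,\dots,z_m$, after accounting for the $\{-1,0,1\}$ window via a projection) of $\bigl(\sum_{v\in\{\pm1\}^m} z^v\bigr)^n$ restricted appropriately, i.e. as a $2^m$-nomial count of lattice walks of $n$ steps returning near the origin. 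Then $\alpha(m,1) = \binom{2\lceil m/2\rceil}{\lceil m/2\rceil}$ is the total number of steps and a local CLT for the step distribution on the relevant sublattice gives a return probability of order $n^{-d/2}$ where $d$ is the dimension of the lattice spanned by the steps: the steps $v\in\{\pm1\}^m$ span a rank-$m$ lattice when the window is symmetric (even $m$), but the parity/sum constraints effectively reduce the number of ``free'' coordinates, and counting carefully one finds the effective dimension is $2\lfloor(m-1)/2\rfloor + 1$, so $n^{-d/2} = n^{-(\lfloor(m-1)/2\rfloor + 1/2)}$, matching the claim. The nondegeneracy (positive-definiteness) of the covariance of the step distribution, which is what makes the local CLT applicable with the stated exponent, is exactly where the majorization/``decreasing criterion'' machinery developed earlier enters: it lets one identify the dominant row-sum vector and control the Gaussian approximation near it.

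The steps in order: (1) formalize the column-append walk model and write $\alpha(m,n)$ as a windowed constant-term / lattice-walk return count; (2) identify the step-set lattice and its rank $d$, showing $d = 2\lfloor(m-1)/2\rfloor+1$ after incorporating parity and balancedness, and identify $\alpha(m,1) = \binom{2\lceil m/2\rceil}{\lceil m/2\rceil}$ as the total step count (consistent with Theorem \ref{porp}, where $\alpha(m,n+2)/\alpha(m,n)\to\alpha(m,1)^2$ forces the exponential base to be $\alpha(m,1)$ and shows the two-step ratio kills the $n^{-d/2}\to n^{-d/2}$ correction in the limit); (3) apply a local central limit theorem (with exponentially small large-deviation tails for the off-center contributions) to get the sharp $\Theta\!\left(\alpha(m,1)^n n^{-d/2}\right)$; (4) handle the two parity classes of $m$ and of $n$ separately and check the floor functions line up. The main obstacle I anticipate is Step (2)–(3): getting the effective dimension $d$ exactly right — the interplay between the sliding window $\{-1,0,1\}$, the evenness of the number of $\pm1$'s per line, and the global balancedness constraint can shave the naive dimension $m$ down to $2\lfloor(m-1)/2\rfloor+1$, and one must verify the covariance matrix of the conditioned step distribution is nondegenerate of exactly that rank, so that neither an extra factor of $n^{1/2}$ nor a loss of one is introduced. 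For small $m$ this can be cross-checked against Theorems \ref{1} and \ref{n=4}, which already pin down the exponents $n^{-3/2}$ (for $m=3$) and $n^{-3/2}$ (for $m=4$), agreeing with $\lfloor(m-1)/2\rfloor+\tfrac12 = \tfrac32$ in both cases.
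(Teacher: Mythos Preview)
Your proposal takes a genuinely different route from the paper, but it carries a basic misconception that infects both of your attempts.

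First, the misconception: you write that ``every partial row-sum coordinate stays in $\{-1,0,1\}$'' as one appends columns. This is false. The discrepancy condition constrains only the \emph{final} row sums (and each column sum), not the partial row sums. There is no sliding window. Consequently your transfer-matrix setup with a finite state space does not exist: the state (the partial row-sum vector) is unbounded, and no rational generating function arises this way. You correctly abandon that picture, but the ``window'' language then recurs in your local-CLT discussion and should be excised entirely.

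Second, the local-CLT approach. Stripped of the window language, this is viable and is \emph{not} what the paper does. The correct model is: an $n$-step walk in $\Z^m$ with i.i.d.\ steps uniform on the $\alpha(m,1)$ column-good vectors in $\{\pm1\}^m$; then $\alpha(m,n)$ is (up to a bounded factor) the number of walks ending in $\{0\}^m$ or $\{\pm1\}^m$. A standard local CLT gives $\Theta(\alpha(m,1)^n\, n^{-d/2})$ where $d$ is the rank of the step covariance. For even $m$ the steps lie in the hyperplane $\{\sum x_i=0\}$ and span it, so $d=m-1$; for odd $m$ they span $\mathbb{R}^m$, so $d=m$. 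In both cases $d=2\lfloor(m-1)/2\rfloor+1$, matching your claim. Verifying nondegeneracy of the covariance and identifying the support lattice is straightforward linear algebra; the majorization machinery is \emph{not} needed here, contrary to what you suggest.

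The paper's own proof is quite different and more elementary: it avoids local CLT entirely. For even $m,n$ it works in the space $\Omega$ of all column-good matrices (size $\binom{m}{m/2}^n$) and uses the majorization criterion (Theorem~\ref{monot}) in both directions. For the lower bound, an \emph{ordinary} CLT shows most matrices in $\Omega$ have row-sum vector in a box of side $O(\sqrt n)$, and since $\br=\mathbf{0}$ maximizes $|A_\br|$ among all comparable $\br$, one gets $\alpha(m,n)\ge |\Omega|/(c\sqrt n)^{m-1}$. For the upper bound, majorization plus a Vandermonde computation (Lemma~\ref{proportion}) shows $|A_\br|/|A_{\mathbf 0}|$ is bounded below uniformly for $\br$ in such a box, so $\alpha(m,n)\le |\Omega|/(c'(\sqrt n)^{m-1})$. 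Your route would yield the same $\Theta$-result with less combinatorics but a heavier analytic hammer; the paper's route is self-contained and makes essential, not incidental, use of the majorization lemma it develops.
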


The paper is organized as follows.

%In Section \ref{pre}, we prepare some definitions and some facts about the parity of $m,n$.% which we needed to reduce the discussions.

In Section \ref{34}, we prove Theorems \ref{1} and \ref{thm:bij3}. The exact values of $\alpha(3,n)$ and $\alpha(4,n)$
are established via discussing the first two rows of the matrix.
We also construct a bijection between $A_{3, n}$ and the collections of $n$-step walks in 3-dimensional space.

In Section \ref{LB}, we prove Theorems \ref{porp} and \ref{appro}.
In that process,  we develop a theory of
decreasing criterion on based row-sum vectors with majorization relation
(most notably Theorem \ref{monot}) by considering the number of column-good matrices with a
fixed row-sum vector, which turns out to be a helpful tool in the proof of our theorems.

In Section \ref{conj}, we discuss a related situation of not-fully occupied arrangements.
for possible directions.

%In Section \ref{conj}, we mention a few future directions for possible improvements, and
%discuss a related situation of not-fully occupied arrangements.

%We will give the complete proof thoroughly during our discussions.

%\subsection{Preliminaries and Auxiliary Facts}\label{pre}

For convenience, we introduce the following notation which shall turn out to be a practical tool.
We will write the row-sums and column-sums of a matrix $M$ in vector form.
Suppose $M$ is a $\pm1$ matrix of $m$ rows and $n$ columns,
but not necessarily in $A(m,n)$.

\begin{defn}\label{defn:rowsum}
(1) Let $\br(M)=(r_1,r_2,r_3,\cdots,r_m)=(1,1,\cdots,1)_{1 \times n} M^T$ denote the \emph{row-sum vector} of %a $\pm1$
matrix $M$.
 For a fixed vector $\br$,
 $A_\br(m,n)$ denotes the collection of column-good $\pm1$ matrices (all column-sums are $\pm1$ or $0$) with row-sum vector $\br$,
%whose row-sums are $\br^T=(r_1,r_2,r_3,\cdots,r_m)^T$ respectively.
and let $\alpha_\br(m,n)=|A_\br(m,n)|$.

(2) Similarly,  ${\bf{c}}(M)=(c_1,c_2,c_3,\cdots,c_n)=(1,1,\cdots,1)_{1 \times m} M$ represents the \emph{column-sum vector} of %a $\pm1$
matrix $M$.

(3) Let $A_{\br,{\bf{c}}}(m,n)$ be the set of $\pm1$ matrices whose row-sum vector and column-sum vector are $\br$ and ${\bf{c}}$, respectively.
\end{defn}

%Thus if the column-sums are all zero, then ${\bf{c}}$ will be abbreviated in the subscript: $A_{\br}(m,n)=A_{\br,{0}}(m,n)$.

By definition, it is easy to see  $A(m,n)=\bigcup\limits_{\br:|r_i|\leq 1}A_\br(m,n)$,
so that $\alpha(m,n)=\sum\limits_{\br:|r_i|\leq 1}\alpha_\br(m,n)$.
In particular, if $2 | n$, then $A(m,n)=A_{\mathbf{0}}(m,n)$.

The following fact and its corollary may be compared with Observation \ref{0}.
\begin{observe}\label{observe:rowsum}
Suppose $m$ is even and $n$ is arbitrary.  Let $\br=(r_1,r_2,\cdots,r_{m-1})$ and  ${\bf{r}}^{+}=(r_1,r_2,\cdots,r_{m-1}, -\sum_{i=1}^{m-1} r_i)$
be row-sum vectors of length $m-1$ and $m$, respectively, such that $\bf{r}^{+}$ extends $\br$ and the last entry of $\bf{r}^{+}$ is
equal to the negative sum of all the previous terms. Then there is a natural bijection between $A_\br(m-1,n)$ and $A_{{\bf{r}}^{+}}(m,n)$,
so that $\alpha_\br(m-1,n)=\alpha_{{\bf{r}}^{+}}(m,n)$.
\end{observe}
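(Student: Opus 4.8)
The plan is to exhibit an explicit map from $A_{\br}(m-1,n)$ to $A_{\br^+}(m,n)$ that simply appends a new bottom row forced by the column constraints, and then verify it is a well-defined bijection. Concretely, given $M=\{a_{ij}\}_{1\le i\le m-1}^{1\le j\le n}\in A_{\br}(m-1,n)$, define $M^+$ by keeping the first $m-1$ rows and setting $a_{mj}=-\sum_{i=1}^{m-1}a_{ij}$ for each $j$. The first thing to check is that $M^+$ is actually a $\pm1$ matrix: since $M$ is column-good and has $m-1$ rows with $m-1$ odd (because $m$ is even), every column sum $\sum_{i=1}^{m-1}a_{ij}$ is exactly $+1$ or $-1$, so $a_{mj}=\mp1\in\{\pm1\}$. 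This is the one place where the parity hypothesis "$m$ even" is essential, and it is the step I would expect to be the only real content of the argument.

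Next I would check that $M^+$ lands in $A_{\br^+}(m,n)$. For the column sums: the new $m$-th column sum is $\sum_{i=1}^{m-1}a_{ij}+a_{mj}=0$ for every $j$, so $M^+$ is column-good (indeed all column sums are $0$, consistent with $m$ even). For the row-sum vector: the first $m-1$ row sums are unchanged, namely $r_1,\dots,r_{m-1}$, and the new bottom row has sum $\sum_{j=1}^{n}a_{mj}=\sum_{j=1}^{n}\bigl(-\sum_{i=1}^{m-1}a_{ij}\bigr)=-\sum_{i=1}^{m-1}\sum_{j=1}^{n}a_{ij}=-\sum_{i=1}^{m-1}r_i$, which is exactly the last coordinate of $\br^+$ by definition. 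Hence $\br(M^+)=\br^+$ and $M^+\in A_{\br^+}(m,n)$.

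Finally I would argue bijectivity by producing the inverse: deletion of the bottom row. Given $N=\{b_{ij}\}_{1\le i\le m}^{1\le j\le n}\in A_{\br^+}(m,n)$, let $\phi(N)$ be the submatrix on rows $1,\dots,m-1$. Its row-sum vector is $(r_1,\dots,r_{m-1})=\br$ by construction, and it is column-good because every column sum of $\phi(N)$ equals $\sum_{i=1}^{m-1}b_{ij}=c_j-b_{mj}$ where $c_j\in\{0,\pm1\}$ is the $j$-th column sum of $N$; since $b_{mj}=\pm1$ and $\sum_{i=1}^{m-1}b_{ij}$ is an odd integer (sum of $m-1$ many $\pm1$'s, $m-1$ odd), it must in fact be $\pm1$, so $\phi(N)\in A_{\br}(m-1,n)$. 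One then checks $\phi$ and $M\mapsto M^+$ are mutually inverse: deleting the row we just appended returns $M$, and conversely for $N\in A_{\br^+}(m,n)$ the forced value of the appended row is $-\sum_{i=1}^{m-1}b_{ij}$, which must coincide with $b_{mj}$ precisely because the $j$-th column sum of $N$ is $0$ (again using $m$ even). Therefore the map is a bijection and $\alpha_{\br}(m-1,n)=\alpha_{\br^+}(m,n)$. No step beyond the $\pm1$-membership check is nontrivial; the rest is bookkeeping with the definitions in Definition \ref{defn:rowsum}.
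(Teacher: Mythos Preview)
Your proof is correct and is precisely the natural bijection the paper has in mind; the paper's own proof simply reads ``Straightforward.'' You have faithfully expanded the intended argument, including the key point that $m$ even forces every column sum of an $m$-row column-good matrix to be $0$, which is what makes the appended row both $\pm1$-valued and uniquely recoverable.
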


\begin{proof}
  Straightforward.
\end{proof}

\begin{cor}\label{cor:bij}
Suppose $2|m$. There is an obvious bijection between  $A(m,n)$ and a subset of $A(m-1,n)$, in that
every matrix $M \in A(m,n)$ is mapped to the matrix $M^{-}$ comprised of the top $m-1$ rows of $M$.
As a matter of fact, the image set exactly consists of matrices in $A(m-1,n)$ such that the sum of
all the entries in its row-sum vector is equal to 0 or $\pm1$.
\end{cor}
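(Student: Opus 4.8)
The plan is to realise the row-deletion map $M\mapsto M^{-}$ as an amalgam, over admissible row-sum vectors, of the bijections furnished by Observation~\ref{observe:rowsum}. First I would isolate the parity phenomenon behind the statement: since $m$ is even, every column of a matrix $M\in A(m,n)$ has $m$ entries drawn from $\{+1,-1\}$, so its column-sum is an even integer; being also of absolute value at most $1$, that column-sum must be $0$. Hence all column-sums of $M$ vanish, and therefore $\sum_{i=1}^{m}r_i(M)=\sum_{j=1}^{n}c_j(M)=0$. Equivalently, the row-sum vector of any $M\in A(m,n)$ has the form $\br^{+}=(r_1,\dots,r_{m-1},-\sum_{i=1}^{m-1}r_i)$, where $\br=(r_1,\dots,r_{m-1})$ is its truncation, and the discrepancy conditions defining $A(m,n)$ amount precisely to $|r_i|\le 1$ for every $1\le i\le m-1$ together with $\bigl|\sum_{i=1}^{m-1}r_i\bigr|\le 1$.

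Next I would combine this with the decomposition $A(m,n)=\bigcup_{\br:\,|r_i|\le 1}A_{\br}(m,n)$: the only classes that occur are the $A_{\br^{+}}(m,n)$ with $\br$ ranging over length-$(m-1)$ vectors satisfying $|r_i|\le 1$ for all $i$ and $\bigl|\sum_i r_i\bigr|\le 1$, and this union is disjoint because every matrix has a single row-sum vector. For each such $\br$, Observation~\ref{observe:rowsum} supplies a bijection $A_{\br^{+}}(m,n)\to A_{\br}(m-1,n)$, and tracing through it one sees that it simply deletes the last row: for even $m$ the removed entries $a_{mj}=-\sum_{i=1}^{m-1}a_{ij}$ are already determined by the retained rows. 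Gluing these bijections over all admissible $\br$ exhibits $M\mapsto M^{-}$ as a well-defined bijection from $A(m,n)$ onto $\bigcup_{\br}A_{\br}(m-1,n)$, the union being over the same index set.

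Finally I would identify this image inside $A(m-1,n)=\bigcup_{\br':\,|r'_i|\le 1}A_{\br'}(m-1,n)$: it is exactly the sub-union over those $\br$ carrying the extra constraint $\bigl|\sum_{i=1}^{m-1}r_i\bigr|\le 1$, which is to say the set of $N\in A(m-1,n)$ whose row-sum-vector entries add up to $0$ or $\pm 1$ --- the description asserted in the corollary. I do not expect a genuine obstacle here; the one point deserving a word of care is that the $\br$-indexed bijections really assemble into the single global map $M\mapsto M^{-}$ and not into a family of unrelated maps --- which is clear because the bijection of Observation~\ref{observe:rowsum} forgets the last row uniformly in $\br$ --- together with the check that the index set slicing $A(m,n)$ matches the one slicing the claimed image, so that nothing is dropped or double-counted.
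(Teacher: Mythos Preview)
Your argument is correct and essentially the same as the paper's: both hinge on the observation that for even $m$ every column sum of $M\in A(m,n)$ is zero, so the deleted $m$th row is forced by the first $m-1$ rows, and the only extra constraint for the inverse is that this forced row have sum in $\{0,\pm1\}$, i.e.\ $\bigl|\sum_{i=1}^{m-1}r_i\bigr|\le 1$. You package this via the row-sum stratification and an explicit appeal to Observation~\ref{observe:rowsum}, whereas the paper argues the forward and inverse maps directly, but the content is identical.
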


\begin{proof}
When $2|m$, the sum of every column in $M \in A(m,n)$ must be zero. Of course a matrix $M^{-} \in A(m-1,n)$
may be mapped by deleting the very last row and keeping the top $m-1$ rows of $M$. Conversely, starting from
a matrix $N \in A(m-1,n)$,  a matrix of $m$ rows might be recovered by completing the $j$th column with
an $m$th entry $a_{mj}$ equal to the negative sum of all the entries in the $j$th column of $N$. Nonetheless,
to guarantee $M \in A(m,n)$, the only new condition to meet is that for the newly added row meets the discrepancy requirement,
which is true if and only if $\sum_{j=1}^{n} a_{mj}$ is equal to 0 or $\pm1$.
\end{proof}

Definition \ref{defn:rowsum} will be useful in proving Theorem \ref{porp} and Theorem \ref{appro} in Section 3.

\section{Deciding $\alpha(3,n)$ and $\alpha(4,n)$}\label{34}

%\subsection{$\alpha(3,n)$}

% First we compute $\alpha(3,n)$ with explicit counting techniques.
Take a matrix $M \in A(3,n)$, consider the upper two rows.
Let $x,y,z,w$ denote the number of 2-tuples
$\binom{+1}{+1},\binom{-1}{-1},\binom{+1}{-1},\binom{-1}{+1}$ in the upper two rows of $M$,
respectively.
Apparently $x+y+z+w=n$ as $M$ has $n$ columns.

\begin{table}[h]
  \centering
\begin{tabular}{|c|c|c||c|c|}
  \hline
   \# of such columns &$x$&$y$&$z$&$w$ \\ \hline
   \multirow{2}{*}{First 2 rows}& +1 & -1 & +1 & -1 \\
   {}&+1 & -1 & -1 & +1 \\
  \hline
\end{tabular}
  \caption{Table 1}  \label{table1}
\end{table}

%We classify the columns whose sums of first two entries is zero
Let $u, v$ be the number of columns with sum of first two entries zero whose third entry is $1$ or $-1$, respectively.
 Therefore $u+v=z+w=n-x-y$.

\begin{table}[h]
 \centering
\begin{tabular}{|c|c|c||c|c|}
  \hline
   \# of such columns &$x$&$y$&$u$&$v$ \\ \hline
   {Sum of first two rows}&+2 & -2 & 0& 0 \\ \hline%\hline
   {3rd row}&-1 & +1 & +1 & -1 \\ \hline
\end{tabular}
  \caption{Table 2}\label{table2}
\end{table}

\begin{lemma}\label{3} For any fixed $M \in A(3,n)$, using the above notation,
If $n$ is even,   $x=y, z=w=u=v$.
If $n$ is odd,  $$x=y\pm1,z=w,u=v; ~or~ x=y\pm1,z=w,u=v\pm2; ~or~ x=y,z=w\pm1, u=v\pm1.$$

It is implied that for any $M \in A(3,n)$,
$$(x-y,z-w,u-v) \in \{(0,1,1),(0,-1,-1),(0,1,-1),(0,-1,1),(1,0,0),(-1,0,0),(1,0,2),(-1,0,-2)\}.$$
\end{lemma}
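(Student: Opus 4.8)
The plan is to translate the global discrepancy constraints on $M$ into arithmetic constraints on the four column-type counts, exactly as the two tables set up. First I would record the three row-sum identities: row~1 has sum $x - y + z - w$ (a $+1$ in the $x$- and $z$-columns, a $-1$ in the $y$- and $w$-columns), row~2 has sum $x - y - z + w$, and row~3 has sum $-x + y + u - v$. Since $M \in A(3,n)$, each of these must be $0$ or $\pm 1$, and moreover the first two rows must have the \emph{same} parity as $n$ while summing compatibly. From $|x-y+z-w| \le 1$ and $|x-y-z+w| \le 1$, adding and subtracting gives $|2(x-y)| \le 2$ and $|2(z-w)| \le 2$, hence $x - y \in \{-1,0,1\}$ and $z - w \in \{-1,0,1\}$; parity of $n$ then forces, when $n$ is even, $x - y = z - w = 0$, and when $n$ is odd, exactly one of $x-y$, $z-w$ is $\pm 1$ and the other is $0$ (they cannot both be odd, since the two row sums $x-y\pm(z-w)$ would then be even, i.e. $0$, forcing $x-y = z-w$ \emph{and} $x - y = -(z-w)$, a contradiction unless both are $0$).

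Next I would feed this into the third row. We have $u + v = z + w$, so $u - v$ has the same parity as $z + w = z - w \pmod 2$, which by the previous step equals the parity of... more directly, $u+v = n - x - y$, so $u - v \equiv n - x - y \equiv n - (x+y) \pmod 2$; combined with $x - y \equiv n \pmod 2$ this pins down the parity of $u - v$. The row-3 constraint $|-x+y+u-v| \le 1$, i.e. $|(u-v) - (x-y)| \le 1$, then restricts $u - v$ to at most two consecutive values once $x - y$ is known. Running the three cases:
\emph{(i)} $n$ even: $x=y$, $z=w$, and $u - v \in \{-1,0,1\}$ with $u - v$ even (since $u+v = n - 2x$ is even), so $u = v$.
\emph{(ii)} $n$ odd with $x - y = \pm 1$, $z = w$: then $u + v = n - x - y$ is even, so $u - v$ is even; the bound $|(u-v) \mp 1| \le 1$ gives $u - v \in \{0, \pm 2\}$, whence $u = v$ or $u - v = \pm 2$ (matching the sign of $x - y$, which one checks from $-x+y+u-v \in \{0,\pm1\}$).
\emph{(iii)} $n$ odd with $x = y$, $z - w = \pm 1$: then $u + v = n - 2x$ is odd, so $u - v$ is odd; the bound $|u - v| \le 1$ gives $u - v = \pm 1$.
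This is precisely the case list in the statement, and collecting $(x-y, z-w, u-v)$ over all cases yields the eight admissible triples. I would also note the converse-type remark is not claimed here — only that every $M$ produces one of these triples — so no realizability argument is needed.

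The main obstacle, such as it is, is bookkeeping: being careful that the parity of $u - v$ is governed by $u + v = z + w$ (equivalently $n - x - y$) and not independently chosen, and that in case~(ii) the sign of $u - v = \pm 2$ is forced to agree with the sign of $x - y$ rather than being free — both follow from re-examining $-x + y + u - v \in \{0, \pm 1\}$ once the magnitude is known. There is no deep idea; the content is entirely in organizing the linear inequalities $|r_1|, |r_2|, |r_3| \le 1$ together with the parity relations $r_i \equiv n \pmod 2$ and the counting identity $x + y + u + v = x + y + z + w = n$.
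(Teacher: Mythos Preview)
Your proposal is correct and follows essentially the same approach as the paper: extract the three row sums $x-y+z-w$, $x-y-(z-w)$, $-(x-y)+u-v$, combine the first two to bound $x-y$ and $z-w$, then use the parity of $u+v=n-x-y$ together with the third-row constraint to pin down $u-v$. The paper's proof is terser and leans on the tables rather than writing the row sums out, but the logical skeleton is identical; one small point worth making explicit in your write-up is that in the odd-$n$ case you should also say why $x-y$ and $z-w$ cannot \emph{both} be $0$ (the row sums would then be even, contradicting $n$ odd), since your parenthetical only rules out both being odd.
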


\begin{proof}
Suppose $n=2k$. As is clear from Table 2, the discrepancy requirement of the first two rows require that $x=y$.
With $x=y$ true, the discrepancy requirement of the third row gives that $u=v$, and Table 1 shows that $z=w$.
Thus $u=v=z=w$ as $u+v=z+w$.
%For even $n=2k$, the number of $+1$'s in the first row is $x+z=k$, then $y+w=k$, as well as the number of $+1$'s in the second row is $x+w=k$, then $y+z=k$.
%Immediately we get $x=y,z=w$.

Now suppose $n=2k+1$. If $x=y$ is true, Table 1 shows that $z=w\pm1$ and the last line of Table 2 shows that $u=v\pm1$.
If $x=y$ is false, taken into account of the first two rows of $M$,
Table 2 indicates that $x-y=\pm 1$. Accordingly, either line of Table 1 shows that
$z=w$, and the last line of Table 2 requires that either $u=v$ or $u-v=\pm 2$ (be reminded that $x+y+u+v=2k+1$ is odd).

%For odd $n=2k+1$, the number of $+1$'s in the first row is $x+z=k+1$ or $k$, and $y+w=k$ or $k+1$;
%as well as the number of $+1$'s in the second row is $x+w=k+1$ or $k$, then $y+z=k$ or $k+1$. Immediately we get $x=y\pm1,z=w$ ~or~ $x=y,z=w\pm1$.

%In the 3rd row, the number of $+1$'s in the 3rd row is $y+u=k+1$ or $k$, and $x+v=k$ or $k+1$; so
%it holds $(y+u)-(x+v)=\pm1$, thus $u-v=x-y\pm1$. Immediately we get $$x=y\pm1,z=w,u=v~OR~x=y\pm1,z=w,u=v\pm2~OR~x=y,z=w\pm1;u=v\pm1.$$ Hence
%$$(x-y,z-w,u-v)\in\{(0,1,1)(0,-1,-1)(0,1,-1)(0,-1,1)(1,0,0)(-1,0,0)(1,0,2)(-1,0,-2)\}.$$

\end{proof}

\begin{proof}[Proof of Theorem \ref{1}]
For any matrix $M\in A(3,n)$, every column is partially decided by the upper 2 entries. The 4 types
of such columns are listed in Table 1.

In case
the sum of the top two tries is 2 or -2, the bottom entry is uniquely determined as illustrated in
Table 2. There are $x$ and $y$ of them, respectively.

%the third entry in every column is partially determined by the 2 entries above it.

(i) ~Let $n=2k$.
Based on the notation and fact of Lemma \ref{1}, we have $x=y$ and $z=w=u=v$.
First we have to decide on which of the $n$ columns are of types
$\binom{+1}{+1},\binom{-1}{-1},\binom{+1}{-1}$ or $\binom{-1}{+1}$
by looking at the upper two entries.
There are $\binom {2k}{x,x,z,z}$ ways to do this. For the former two types,
the third entry of every column is fully decided as discussed. For the latter two
types, we need to choose $u=z$ columns among the total of $2z$  to make the third
entry $1$ and the rest $v=z$ columns to make the third
entry $-1$.

%By Lemma \ref{1}, we compute on all the tuples of $(x,y,z,w)$ satisfies $x+y+z+w=n$ and $x=y,z=w$. For each tuple of $(x,y,z,w)$, fix the first two row of the matrix, the number of the matrices (which is determined on the third row) in $A(3,n)$ is $\binom{2z}{z}$, since we need to choose $z~``1"$'s in $2z$ $``+1"$ or $``-1"$'s.

Hence,
$$\begin{aligned}
\alpha(3,n)&=|A(3,n)|=\sum_{x,y,z,w}\binom n{x,y,z,w}\binom{2z}{z}\\
&=\sum_{x,x,z,z}\binom {2k}{x,x,z,z}\binom{2z}{z}=\sum_{x=0}^{k}\binom {2k}{x,x,k-x,k-x}\binom{2(k-x)}{k-x}\\
&=\sum_{i=0}^{k}\binom{2k}{k-i,k-i,i,i}\binom{2i}{i}=\binom{2k}k\sum_{i=0}^{k}\binom{k}{i}^2\binom{2i}i.
\end{aligned}$$

(ii)~Let $n=2k+1$.
Similar to (i), except that we have more cases to discuss.
By Lemma \ref{1}, we have,
$$\begin{aligned}
&~~~~\alpha(3,n)=|A(3,n)|=\sum_{x,y,z,w,u,v}\binom n{x,y,z,w}\binom{z+w}{u}\\
&=\sum_{x,x+1,z,z}\binom n{x,x+1,z,z}(\binom{2z}{z}+\binom{2z}{z-1})+\sum_{x,x-1,z,z}\binom n{x,x-1,z,z}(\binom{2z}{z}+\binom{2z}{z+1})\\
&+\sum_{x,x,z,z+1}\binom n{x,x,z,z+1}(\binom{2z+1}{z}+\binom{2z+1}{z+1})+\sum_{x,x,z,z-1}\binom n{x,x,z,z-1}(\binom{2z-1}{z}+\binom{2z-1}{z-1})\\
&=2\sum_{z=0}^k\binom n{z,z,k-z,k-z+1}(\binom{2z}{z}+\binom{2z}{z+1})+4\sum_{z=0}^k\binom n{k-z,k-z,z,z+1}\binom{2z+1}{z}\\
%&=2\binom nk\sum_{z=0}^k\binom kz\binom{k+1}z\binom{2z+1}{z+1}+4\binom nk\sum_{z=0}^k\binom kz\binom{k+1}{z+1}\binom{2z+1}{z}\\
&=2\binom{n}{k}\sum_{i=0}^{k}\binom{k}{i}\left[\binom{2i+1}{i+1}\binom{k+1}i+2\binom{k+1}{i+1}\binom{2i+1}{i}\right].
\end{aligned}$$
\end{proof}

%By the property of parity Lemma \ref{0},
\begin{cor}
$$\alpha(4,2k)=\alpha(4,2k-1)=\alpha(3,2k)=\binom{2k}k\sum_{i=0}^{k}\binom{k}{i}^2\binom{2i}i.$$
\end{cor}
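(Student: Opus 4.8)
The plan is to derive the corollary by chaining together the even--even case of Observation \ref{0} with Theorem \ref{1}. First I would note that the corollary asserts two separate equalities, $\alpha(4,2k)=\alpha(4,2k-1)$ and $\alpha(4,2k)=\alpha(3,2k)$; the closed form on the right is then immediate from the second equality combined with the even case of Theorem \ref{1}, since $\alpha(3,2k)=\binom{2k}{k}\sum_{i=0}^{k}\binom{k}{i}^2\binom{2i}{i}$ is exactly that formula. So the real content is establishing those two equalities.

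For the equality $\alpha(4,2k)=\alpha(4,2k-1)$: here $m=4$ and $n=2k$ are both even, so this is precisely the case ``$2\mid m$, $2\mid n$'' of Observation \ref{0}, which gives $\alpha(m,n)=\alpha(m-1,n)$ — that is, $\alpha(4,2k)=\alpha(3,2k)$ — and also $\alpha(m,n)=\alpha(m,n-1)$ by symmetry, i.e. $\alpha(4,2k)=\alpha(4,2k-1)$. Thus both halves of the string of equalities $\alpha(4,2k)=\alpha(4,2k-1)=\alpha(3,2k)$ come directly from Observation \ref{0} applied with $(m,n)=(4,2k)$. I would spell out that when $m$ and $n$ are both even, the map deleting the top row of a matrix in $A(4,2k)$ is a bijection onto $A(3,2k)$ because the completed fourth row automatically has row sum zero (every row sum of the $3\times 2k$ matrix being even and bounded by $1$, hence zero), so no row-sum obstruction arises; the column-deletion symmetry is identical with rows and columns swapped.

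Finally I would substitute the even-$n$ formula from Theorem \ref{1} for $\alpha(3,2k)$ to obtain
$$\alpha(4,2k)=\alpha(4,2k-1)=\alpha(3,2k)=\binom{2k}{k}\sum_{i=0}^{k}\binom{k}{i}^2\binom{2i}{i},$$
which is the claimed identity. I do not anticipate any genuine obstacle: the proof is purely a matter of invoking Observation \ref{0} in the doubly-even case and quoting Theorem \ref{1}. The only point requiring a word of care is making sure the hypotheses of Observation \ref{0} are met — namely that both $m=4$ and $n=2k$ are even — and noting that the chain $\alpha(4,2k-1)$ is reached through the column-symmetry clause of that observation rather than through Theorem \ref{1} directly (indeed $\alpha(4,2k-1)$ is not computed by Theorem \ref{1} at all, only $\alpha(3,n)$ is). This also matches Theorem \ref{n=4} evaluated at even $n$, providing a consistency check.
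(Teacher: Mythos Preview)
Your proposal is correct and is essentially identical to the paper's own proof, which simply reads ``By Observation \ref{0} and Theorem \ref{1}.'' You have merely unpacked the two invocations in more detail, but the logical structure---apply the doubly-even case of Observation \ref{0} with $(m,n)=(4,2k)$ to get $\alpha(4,2k)=\alpha(3,2k)=\alpha(4,2k-1)$, then quote the even case of Theorem \ref{1}---is exactly the same.
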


\begin{proof}
  By Observation \ref{0} and Theorem \ref{1}.
\end{proof}
Thus we have established Theorem \ref{n=4}.

\begin{cor}\label{4minus3}
$\alpha(4,2k-1)-\alpha(3,2k-1)=-\binom {2k}k \sum_{i=0}^{k-1}\binom ki\binom{k-1}i\binom{2i}{i+1}$.
\end{cor}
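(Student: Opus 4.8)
The plan is to compute both $\alpha(4,2k-1)$ and $\alpha(3,2k-1)$ in closed form and subtract. For $\alpha(4,2k-1)$ we already have, from the previous corollary, that $\alpha(4,2k-1)=\alpha(3,2k)=\binom{2k}{k}\sum_{i=0}^{k}\binom{k}{i}^2\binom{2i}{i}$. For $\alpha(3,2k-1)$ we use Theorem \ref{1} with $n=2k-1$, i.e.\ with ``$k$'' there replaced by $k-1$:
$$\alpha(3,2k-1)=2\binom{2k-1}{k-1}\sum_{i=0}^{k-1}\binom{k-1}{i}\left[\binom{2i+1}{i+1}\binom{k}{i}+2\binom{k}{i+1}\binom{2i+1}{i}\right].$$
So the task reduces to a binomial-coefficient identity: showing that $\alpha(4,2k-1)-\alpha(3,2k-1)$ equals $-\binom{2k}{k}\sum_{i=0}^{k-1}\binom{k}{i}\binom{k-1}{i}\binom{2i}{i+1}$.

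First I would normalize everything so each summand is a multiple of $\binom{2k}{k}$. Using $\binom{2k-1}{k-1}=\tfrac12\binom{2k}{k}$, the factor $2\binom{2k-1}{k-1}$ becomes $\binom{2k}{k}$, so we must prove the pure identity
$$\sum_{i=0}^{k}\binom{k}{i}^2\binom{2i}{i}-\sum_{i=0}^{k-1}\binom{k-1}{i}\left[\binom{2i+1}{i+1}\binom{k}{i}+2\binom{k}{i+1}\binom{2i+1}{i}\right]=-\sum_{i=0}^{k-1}\binom{k}{i}\binom{k-1}{i}\binom{2i}{i+1}.$$
Next I would rewrite the central binomials appearing on the left-hand side in a common shape. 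Using $\binom{2i+1}{i+1}=\binom{2i+1}{i}=\tfrac{2i+1}{i+1}\binom{2i}{i}$ and $\binom{2i}{i+1}=\tfrac{i}{i+1}\binom{2i}{i}$, together with the Pascal/absorption identities $\binom{k-1}{i}\binom{k}{i}=\binom{k}{i}^2\cdot\tfrac{k-i}{k}$ and $\binom{k-1}{i}\binom{k}{i+1}=\binom{k}{i}^2\cdot\tfrac{k-i}{i+1}$, every term becomes $\binom{k}{i}^2\binom{2i}{i}$ times an explicit rational function of $i$ and $k$. The claimed identity then collapses to verifying that the rational coefficients cancel term by term — i.e.\ a single elementary algebraic identity in $i,k$ — after which the index shift in the $\binom{k}{i+1}$ term (replace $i$ by $i-1$) and a check of the boundary terms $i=0,k$ finishes the argument. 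Alternatively, one can invoke Zeilberger's algorithm: all four sums are hypergeometric in $k$, so each satisfies a first-order recurrence in $k$, and matching initial values at $k=1$ (or $k=0$) suffices; I would mention this as the clean automated route.

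The main obstacle is purely bookkeeping: there are several sums with shifted indices ($i$, $i+1$, and central binomials at argument $2i$ and $2i+1$), and one must be careful to align them onto a common summation index and to track the contributions of the endpoint terms that are created or destroyed by the shifts. Once everything is expressed as $\binom{k}{i}^2\binom{2i}{i}$ times a rational function of $i$ and $k$, no deeper idea is needed — the identity is forced. I expect the cleanest writeup to first state the four ``absorption'' identities used, then display the common-denominator form of each of the three sums, and conclude by the resulting polynomial identity $\tfrac{k-i}{k}\cdot\tfrac{2i+1}{i+1}+2\cdot\tfrac{k-i}{i+1}\cdot\tfrac{2i+1}{i+1}\cdot(\text{shift correction})-1=-\tfrac{k-i}{k}\cdot\tfrac{i}{i+1}$, whose verification is immediate.
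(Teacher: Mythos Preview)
Your approach is exactly the paper's: use the previous corollary to write $\alpha(4,2k-1)=\alpha(3,2k)$, take both closed forms from Theorem~\ref{1}, pull out the common factor $\binom{2k}{k}$ via $2\binom{2k-1}{k-1}=\binom{2k}{k}$, and reduce to a routine binomial-coefficient identity. The paper simply says ``by comparing the formulas'' and leaves the manipulation implicit, whereas you spell out the strategy; one small slip to fix in your write-up is the absorption identity $\binom{k-1}{i}\binom{k}{i+1}=\binom{k}{i}^2\cdot\frac{(k-i)^2}{k(i+1)}$ (you dropped a factor of $(k-i)/k$), but this does not affect the method.
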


\begin{proof}
By comparing the formula of $\alpha(3,2k)$ and that of $\alpha(3,2k-1)$ in Theorem \ref{1}.
\end{proof}

The meaning of Corollary \ref{4minus3} is to attempt to evaluate the difference of $\alpha(2t,2k-1)$ and $\alpha(2t-1,2k-1)$.
We already know that $\alpha(2t,2k-1) \leq \alpha(2t-1,2k-1)$ and that $\alpha(2t,2k)=\alpha(2t-1,2k)$ from Observation \ref{0}.

Calculated by Maple, the initial few entries of the sequence $\{ \alpha(3,2k) \}_{k \geq 0} $ are the following:
$$\{\alpha(3,2k)\}:~1, 6, 90, 1860, 44730, 1172556, 32496156, \cdots.$$
%\begin{proof}By Theorem \ref{0},$\alpha(4,n)=\alpha(4,n-1)=\alpha(3,n)$, so we only need to estimate $\alpha(3,n)$. Apply the Stirling's Formula, we know that the permutations
%$$\begin{aligned}\alpha(3,2k)&=\binom{n}{k}\sum_{i=0}^{k}\binom{k}{i}^2\binom{2i}i\\&\sim.\end{aligned}$$\end{proof}

It turns out that $\{ \alpha(3,2k) \}_{n \geq 0} $ is recognized in the database The
On-Line Encyclopedia of Integer Sequences (OEIS) \cite{slo1}
to be sequence $A002896$: Number of $2k$-step polygons on cubic lattice.
This fact is extended by Theorem \ref{thm:bij3}.
%To better interpret this fact, we give a bijection of the two objects both
%counted by the same sequence as in Theorem \ref{thm:bij3}.

\begin{defn} Let $W_n$ be the collection of
the following walks with $n$ steps on the cubic lattice $\mathbb{Z} \times \mathbb{Z} \times
\mathbb{Z}$, starting  at the origin $(0, 0, 0)$,
each step being one of $\{ (1, 0, 0), (-1, 0, 0), (0,1,0), (0,-1,0), (0,0,1), (0,0,-1)\}$.
If $n$ is even, a walk of $W_n$ ends at precisely $(0, 0, 0)$;
if $n$ is odd, a walk of $W_n$ ends either at
one of the six neighboring points of the origin (i.e., $(\pm1,0,0)$ and their permutations)
or one of $\{(1,1,1), (-1,-1,-1)\}$.
\end{defn}

Recall Theorem \ref{thm:bij3} states that there exists a bijection between $A_{3, n}$ and $W_n$ for even $n$.

\begin{proof}[Proof of Theorem \ref{thm:bij3}]
For any given $M \in A_{3, n}$, every column $M_j=(x_j,y_j,z_j)^T$ is a permutation of
the form $(a,a,b)^T$ where $\{a, b\} =
\{-1, 1\}$, $1 \leq j \leq n$.
Let $\varphi ((x_j,y_j,z_j)^T)=\big(\frac{y_j+z_j}{2},\frac{z_j+x_j}{2},\frac{x_j+y_j}{2}\big)^T$.
Reading the columns of $M$ from left to right,
set $\Phi (M)$ to be the walk of $n$ unit steps that starts from the origin using steps $(\varphi((x_j,y_j,z_j)^T), 1 \leq j \leq n)$.
%Thus $\Phi$ maps $A_{3,n}$ to $n$ unit steps in $\mathbb{Z} \times \mathbb{Z} \times \mathbb{Z}$.

When $n$ is even, the discrepancy requirement gives that
$\sum_{j=1}^{n} \frac{y_j+z_j}{2}=\frac{\sum_{j=1}^{n}y_j}{2}+ \frac{\sum_{j=1}^{n}z_j}{2}=0$.
It is the same for the other two coordinates of the destination point. So the destination is the
origin.
When $n$ is odd, $\Phi (M)$ does not stop at the origin as
$$\sum_{j=1}^{n} \frac{y_j+z_j}{2}+\sum_{j=1}^{n} \frac{z_j+x_j}{2}+\sum_{j=1}^{n} \frac{x_j+y_j}{2}
=\sum_{j=1}^{n}x_j + \sum_{j=1}^{n} y_j + \sum_{j=1}^{n} z_j \in \{-3,-1,1,3 \}.$$
While $\sum_{j=1}^{n} \frac{y_j+z_j}{2}=\frac{\sum_{j=1}^{n}y_j}{2}+ \frac{\sum_{j=1}^{n}z_j}{2} \in \{0,1,-1\}$,
note that $\sum_{j=1}^{n} \frac{y_j+z_j}{2}=1$ if and only if $\sum_{j=1}^{n}y_j=\sum_{j=1}^{n}z_j=1$,
and the consequence is that the destination point is one of \{(1,1,1), (1,0,0)\}.
Hence by symmetry, when $n$ is odd exactly 1 or 3 of the coordinates of $\Phi (M)$
are $\pm 1$, and that the destination is either one of the six neighboring points of the origin or one of
\{(1,1,1), (-1,-1,-1)\}.

%When $n$ is odd, $\sum_{j=1}^{n} \frac{y_j+z_j}{2}=\frac{\sum_{j=1}^{n}y_j}{2}+ \frac{\sum_{j=1}^{n}z_j}{2} \in \{0,1,-1\}$.
%Likewise for the other two coordinates of the destination point. Because $n$ is odd, either 1 or 3 of the coordinates
%are $\pm 1$. So the ending point is either one of the six neighboring points of the origin
%or one of the eight points with distance $\sqrt{3}$ to the origin.
%(when $n$ is odd). Likewise for the other two coordinates of the destination point.

Thus $\Phi$ is indeed from $A_{3,n}$ to $W_n$.

Next we check that $\Phi$ is reversible.
Given any unit-step list $W=((a_j,b_j,c_j)^T_{1\leq j\leq n}) \in W_n$,
solving from $\Phi(M)=W$, we get $M_j =(b_j+c_j-a_j,a_j+c_j-b_j,a_j+b_j-c_j)^T$.
Since $W$ consists of only unit steps,
two of $\{a_j, b_j, c_j\}$ are zeros and the third one is 1 or -1,
so that $((b_j+c_j-a_j)+(a_j+c_j-b_j)+(a_j+b_j-c_j))=a_j+b_j+c_j=\pm1$ for any $j$.
This guarantees the column discrepancy condition of $M$.
By definition, $W \in W_n$. If $n$ is even, %the coordinates of the destination are all zeros.
$\sum_{j=1}^{n} a_j =\sum_{j=1}^{n} b_j=\sum_{j=1}^{n} c_j=0$.
so $\sum_{j=1}^{n} b_j+c_j-a_j =\sum_{j=1}^{n} a_j+c_j-b_j=\sum_{j=1}^{n} a_j+b_j-c_j=0$.
%so that the row discrepancy condition of $M$ is met.
If $n$ is odd, the destination of $W$ is either one of the six neighboring points of the origin or one of
\{(1,1,1), (-1,-1,-1)\}. In case $W$ ends at $(1,1,1)$,
the fact $\sum_{j=1}^{n} a_j =\sum_{j=1}^{n} b_j=\sum_{j=1}^{n} c_j=1$
results in $\sum_{j=1}^{n} b_j+c_j-a_j =\sum_{j=1}^{n} a_j+c_j-b_j=\sum_{j=1}^{n} a_j+b_j-c_j=0$.
In case $W$ ends at $(1,0,0)$,  $\sum_{j=1}^{n} a_j =1$ and $\sum_{j=1}^{n} b_j=\sum_{j=1}^{n} c_j=0$
give that \begin{align}
             & \sum_{j=1}^{n} b_j+c_j-a_j =-1, \
             \sum_{j=1}^{n} a_j+c_j-b_j =1, \
             \sum_{j=1}^{n} a_j+b_j-c_j =1. \notag
          \end{align}
Without loss of generality, the other cases are similar.
Hence the row discrepancy condition of $M$ is met too,
and $M$ is indeed in $A_3(n)$.
Thus the bijectivity of $\Phi$ is established.
\end{proof}

According to a result of Richmond and Rousseau
\cite{RR89}, $\alpha(3, 2n) = {2n \choose {n}} \sum_{i=0}^{{n}} {{n} \choose {i}}^2 {{2i \choose i}}$ may be approximated to
$$\frac{(3 \sqrt{3}/4) \cdot {6^{2n}}} {(\pi n)^{3/2}}.$$

By Theorem \ref{1}, with simple calculations on the odd case, we have:
\begin{cor}\label{cor5}For $m=3$,
$\alpha(3,n)\sim\begin{cases}\frac{3\sqrt3}{\sqrt2\pi^{\frac32}}\cdot\frac{6^n}{n^{\frac32}},~&2|n\\
\\ \frac{2\sqrt6}{\pi^{\frac32}}\cdot\frac{6^{n}}{n^{\frac32}},~&2\nmid n\end{cases}.$
%For $m=4$, $\alpha(4,n)\sim\frac{3\sqrt3}{\sqrt2\pi^{\frac32}}\cdot\frac{6^{2\cdot\lfloor\frac{n+1}2\rfloor}}{n^{\frac32}}$.
\end{cor}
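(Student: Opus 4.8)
The plan is to derive the asymptotics of $\alpha(3,n)$ for odd $n$ from Theorem \ref{1} in exactly the same spirit that the even case is handled via Richmond--Rousseau \cite{RR89}. First I would record the odd-case formula
$$\alpha(3,2k+1)=2\binom{n}{k}\sum_{i=0}^{k}\binom{k}{i}\left[\binom{2i+1}{i+1}\binom{k+1}i+2\binom{k+1}{i+1}\binom{2i+1}{i}\right],$$
with $n=2k+1$, and rewrite the bracketed summand so that the dominant term is made explicit. Using $\binom{2i+1}{i+1}=\binom{2i+1}{i}=\frac12\binom{2i+2}{i+1}$ and $\binom{k+1}{i}+\binom{k+1}{i+1}=\binom{k+2}{i+1}$-type identities, one can massage the sum into a single central-binomial-weighted sum of squared binomials plus lower-order corrections, i.e. essentially $\binom{2k}{k}\sum_i\binom{k}{i}^2\binom{2i}{i}$ up to polynomial-in-$k$ factors and a constant.

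Next I would invoke the known asymptotic for the even case in the form already quoted in the excerpt: $\binom{2k}{k}\sum_{i=0}^{k}\binom{k}{i}^2\binom{2i}{i}\sim \dfrac{(3\sqrt3/4)\,6^{2k}}{(\pi k)^{3/2}}$, together with the elementary estimate $\binom{2k}{k}\sim 4^k/\sqrt{\pi k}$, to convert whatever closed combination I obtained for $\alpha(3,2k+1)$ into an expression of the shape $C\cdot 6^{2k+1}/k^{3/2}$. Then substituting $k=(n-1)/2$, so that $6^{2k}=6^{n}/6$ and $k^{3/2}\sim (n/2)^{3/2}$, collects all constants: the factor $2$ out front, the $1/6$ from $6^{2k}=6^{n-1}$, and the $2^{3/2}$ from $k^{-3/2}\sim 2^{3/2}n^{-3/2}$ should combine to give the claimed constant $\dfrac{2\sqrt6}{\pi^{3/2}}$. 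I would double-check the bookkeeping by comparing with the even constant $\dfrac{3\sqrt3}{\sqrt2\,\pi^{3/2}}$, since the ratio of the two constants must match $\lim \alpha(3,2k+1)/\alpha(3,2k)$, which by Theorem \ref{porp} tends to a clean value ($\alpha(3,1)=2$, but one must be careful that this is the two-step ratio, not the one-step ratio).

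The main obstacle I anticipate is not any deep idea but the algebraic identification of the odd-case sum with a scalar multiple of the even-case sum up to negligible terms: one has to show that the ``extra'' pieces coming from $\binom{k+1}{i}$ versus $\binom{k}{i}$ and from $\binom{2i+1}{i}$ versus $\binom{2i}{i}$ contribute only a bounded multiplicative distortion that converges, and that no term of the same order is dropped. Concretely I would write $\binom{k+1}{i}=\frac{k+1}{k+1-i}\binom{k}{i}$ and $\binom{2i+1}{i}=\frac{2i+1}{i+1}\binom{2i}{i}$, note that the sum is dominated by the range $i=k/2+O(\sqrt k)$ where these ratios are each $\to$ a constant ($2$ and $1$ respectively in the relevant regime after accounting for where the summand peaks), and then a dominated-convergence / Laplace-method argument pins down the leading constant. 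Once that reduction is in hand, the rest is the routine Stirling bookkeeping sketched above, and the stated corollary follows.
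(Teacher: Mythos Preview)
Your overall strategy—rewrite the odd-case sum from Theorem \ref{1}, relate it term-by-term to the even-case sum $\binom{2k}{k}\sum_i\binom{k}{i}^2\binom{2i}{i}$, and invoke the Richmond--Rousseau asymptotic—is exactly what the paper means by ``simple calculations on the odd case,'' and it will work. But two of your stated intermediate computations are wrong and would give the wrong constant if carried through as written.

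First, the summand $\binom{k}{i}^2\binom{2i}{i}$ does \emph{not} peak at $i=k/2+O(\sqrt k)$. A ratio test
\[
\frac{f(i+1)}{f(i)}=\Bigl(\frac{k-i}{i+1}\Bigr)^{2}\cdot\frac{2(2i+1)}{i+1}
\]
gives the peak at $i\sim 2k/3$. Consequently your claimed limiting ratios are off: at $i\sim 2k/3$ one has $\binom{k+1}{i}\big/\binom{k}{i}=(k+1)/(k+1-i)\to 3$ and $\binom{2i+1}{i}\big/\binom{2i}{i}=(2i+1)/(i+1)\to 2$, not ``$2$ and $1$ respectively.'' Since the leading constant you are after comes precisely from these ratios evaluated at the peak, this error is not cosmetic.

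Second, $\alpha(3,1)=6$, not $2$ (there are $\binom{3}{1}+\binom{3}{2}=6$ column-good $3\times 1$ vectors), so the two-step ratio from Theorem \ref{porp} is $36$, consistent with the $6^n$ in the asymptotic. As a sanity check on your final constant, the one-step ratio $\alpha(3,2k+1)/\alpha(3,2k)$ should tend to $\dfrac{2\sqrt6}{3\sqrt3/\sqrt2}\cdot 6=8$; you can verify your corrected Laplace computation against this before declaring victory. Once the peak location is fixed, the method goes through.
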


\begin{remark}The 3-dimensional case of Theorem \ref{thm:bij3} can hardly be generalized naturally
to the $n$-dimensional case.  For instance, when $d=2$, there are
in total $\sum_{2i+2j=n} \binom{n}{i,i,j,j}$ ``$n$-step back'' walks
in $\mathbb{Z} \times \mathbb{Z}$, ,
way too many compared with
$\binom{2\lfloor\frac{n+1}{2}\rfloor}{\lfloor\frac{n+1}{2}\rfloor}= \alpha(2, n)$.
\end{remark}

\section{Limits and Bounds}\label{LB}

Let $m$ be an arbitrarily fixed integer.
In this section we investigate
the limits and bounds of the asymptotic behavior of $\alpha(m,n)$ while $n$ tends to infinity.

\subsection{Majority and Comparison while row-sum vector varies}
At the end of Section 1, the notion of row-sum vector is introduced.
For a $\pm1$ matrix $M$ of $m$ rows and $n$ columns not necessarily in $A(m,n)$,
if we modify $M$ by swapping a pair of 1 and $-1$ in the same column,
the column sums do not change, but the row-sum
vector will receive an increment of $(+2,-2,0,0,\cdots,0)$ (or a permutation of the components).
Let $\br \in \mathbb{Z}^m$. Recall that $A_\br(m,n)$ represents the collection of column-good $\pm1$ matrices
(all column-sums are $\pm1$ or $0$) with row-sum vector $\br$,
and $A_{\br,{\bf{c}}}(m,n)$ is the set of $\pm1$ matrices whose row-sum vector and column-sum vector are $\br$ and ${\bf{c}}$, respectively.
%whose row-sums are $\br^T=(r_1,r_2,r_3,\cdots,r_m)^T$ respectively.
We hope to tell whether $|A_\br(m,n)|=\sum\limits_{{\bf{c}}: |c_i|\leq 1} |A_{\br,{\bf{c}}}(m,n)|$
increases or decreases after the swapping.

First we observe that a kind of majorization relation is useful in analyzing the swap process,
which is summarized in the following definition.
\begin{defn}
We say that a vector $x=(x_1,x_2,\cdots,x_m) \in \mathbb{Z}^m$ \emph{majorizes} a vector $y=(y_1,y_2,\cdots,y_m) \in \mathbb{Z}^m$, denoted by $x \succeq  y$,
if (i) $\sum_{i=1}^m x_i=\sum_{i=1}^m y_i$, and (ii) for any $k\in\{1,2,\cdots,m\}$, $\sum_{j=1}^k x_{[i]}\geq\sum_{j=1}^k y_{[i]}$.
Here $(x_{[1]},x_{[2]},\cdots,x_{[m]}), (y_{[1]},y_{[2]},\cdots,y_{[m]})$
are the nondecreasing rearrangements of $(x_1,x_2,\cdots,x_m)$ and $(y_{1},y_{2},\cdots,y_{m})$, respectively.
\end{defn}

%In short words, the sum of the largest $k$ components of $x$ is greater than or equal to the sum of the largest $k$ components of $y$.
The concept and notation of majorization
were introduced in 1952, by Hardy, Littlewood, and P\'olya \cite{HLP52},
but the idea of it seems to first appear in the works of
R. F. Muirhead in 1902 \cite{Muir1902}. Our main result
about majorization is the following Theorem \ref{monot},
which says that for any two ``comparable'' vectors,
in the sense one majorizes the other, there is
a reverse monotonicity in terms of the amount of
column-good $\pm1$ matrices with
corresponding row-sum vectors.

%about the Decreasing criterion on majorization.

%Now we show that the majorization determines the monotonicity.
\begin{theorem} \emph{(Decreasing criterion on majorization)} \label{monot}
For any two arbitrary integer row-sum vectors $\br, {\bf{r'}}$ with $\br' \succeq  {\bf{r}}$,
it so holds that $|A_{\bf{r}}(m,n)| \geq |A_{\br'}(m,n)|$.
\end{theorem}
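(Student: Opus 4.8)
The plan is to reduce the general majorization statement to the single atomic step, since majorization between integer vectors with equal sum is generated by transfers of the form: decrease one coordinate by $1$ and increase another (smaller) coordinate by $1$. More precisely, if $\br' \succeq \br$ with $\br' \neq \br$, then there is a finite chain $\br' = \br^{(0)} \succeq \br^{(1)} \succeq \cdots \succeq \br^{(t)} = \br$ in which each $\br^{(\ell+1)}$ is obtained from $\br^{(\ell)}$ by picking two indices $a,b$ with $r^{(\ell)}_a \geq r^{(\ell)}_b + 2$ and replacing $(r^{(\ell)}_a, r^{(\ell)}_b)$ by $(r^{(\ell)}_a - 1, r^{(\ell)}_b + 1)$ (the classical Muirhead/Robin Hood lemma, adapted to $\Z^m$; one must check the intermediate vectors can be kept integral, which is automatic here). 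So it suffices to prove: if $\br^{+}$ is obtained from $\br^{-}$ by such an elementary Robin Hood transfer, then $\alpha_{\br^-}(m,n) \geq \alpha_{\br^+}(m,n)$, i.e. moving row-sums closer together can only increase (weakly) the count of column-good matrices.

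For the atomic step I would fix the two rows $a,b$ involved and use the column-sum refinement $\alpha_\br(m,n) = \sum_{\bc:\,|c_i|\le 1}|A_{\br,\bc}(m,n)|$ introduced just before the theorem. The key is that the column-sum vector $\bc$ is invariant under the transformation I want to build, so it suffices to exhibit, for each admissible $\bc$, an injection $A_{\br^+,\bc}(m,n) \hookrightarrow A_{\br^-,\bc}(m,n)$. Concretely, given $M \in A_{\br^+,\bc}(m,n)$: its rows $a$ and $b$ have sums $r_a^+ = r_a^- + 1$ and $r_b^+ = r_b^- - 1$ with $r_a^+ \geq r_b^+ + 1$; comparing rows $a$ and $b$ entrywise, the number of columns where row $a$ has $+1$ and row $b$ has $-1$ exceeds the number where row $a$ has $-1$ and row $b$ has $+1$ (because the difference of these two counts equals $(r_a^+ - r_b^+)/2 \geq 1$, and it is an integer). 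Pick the leftmost such ``$(+,-)$'' column and flip both entries in rows $a$ and $b$ there (sending $(+1,-1)$ to $(-1,+1)$). This leaves every column sum unchanged, decreases $r_a$ by $2$ and increases $r_b$ by $2$ — wait: that overshoots. The correct atomic move is instead a transfer of size $1$ in the row-sums, which cannot be realized by flipping a single $\pm1$ pair in one column. So I would instead work with transfers of size $2$: it suffices to connect $\br'$ to $\br$ through vectors differing by $(\dots,+2,\dots,-2,\dots)$-type moves, but majorization on $\Z^m$ is not generated by those alone. The resolution is to restrict attention, as the paper does, to row-sum vectors arising from $A(m,n)$-type problems where all coordinates have a common parity; within a fixed parity class, majorization is generated by size-$2$ transfers, and then the single-column flip above furnishes exactly the desired injection. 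I would state and prove this parity-respecting generation lemma first.

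Thus the ordered steps are: (1) prove the combinatorial lemma that, among integer vectors of a fixed parity and fixed sum, $\br' \succeq \br$ implies a chain of size-$2$ Robin Hood transfers from $\br'$ down to $\br$; (2) for a single size-$2$ transfer affecting rows $a,b$, and for each fixed admissible column-sum vector $\bc$, build the injection $A_{\br^+,\bc}(m,n) \hookrightarrow A_{\br^-,\bc}(m,n)$ by locating the leftmost column where row $a$ is $+1$ and row $b$ is $-1$ (such a column exists since the count difference is $\geq 1$ after the transfer) and flipping that pair; (3) check injectivity — the map is reversible on its image because the leftmost flipped column is recoverable by comparing the new vector's discrepancy in rows $a,b$; (4) sum over $\bc$ and compose along the chain from step (1) to conclude $\alpha_{\br}(m,n) \geq \alpha_{\br'}(m,n)$. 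The main obstacle is step (2)–(3): one must verify that the leftmost-column rule produces a genuine injection (two distinct preimages could conceivably map to the same matrix), which requires pinning down exactly which column was flipped from the image alone; the cleanest fix is to choose the column by a canonical rule (leftmost column in which rows $a,b$ read $(+,-)$ and such that flipping it keeps the ``surplus'' nonnegative) and then argue the inverse picks the leftmost $(-,+)$ column among those that restore the surplus — making the correspondence a bijection onto a subset. Handling the bookkeeping of this canonical choice carefully is where the real work lies; everything else is routine.
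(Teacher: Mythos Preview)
Your global architecture matches the paper exactly: reduce to a fixed parity class, connect $\br'$ to $\br$ by a chain of size-$2$ Robin Hood transfers (the paper's Lemma~\ref{lem:Dalton}), and for each atomic transfer build, at every fixed column-sum vector $\bc$, an injection $A_{\br',\bc}(m,n)\hookrightarrow A_{\br,\bc}(m,n)$ (the paper's Lemma~\ref{Dtran}), then sum over $\bc$. So steps (1) and (4) are fine.

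The gap is in your injection. The rule ``flip the leftmost column where rows $a,b$ read $(+1,-1)$'' is \emph{not} injective. Take $m=2$, $n=4$, $\br'=(2,-2)$, $\bc=(0,0,0,0)$, and
\[
M_1=\begin{pmatrix}-1&+1&+1&+1\\+1&-1&-1&-1\end{pmatrix},\qquad
M_2=\begin{pmatrix}+1&-1&+1&+1\\-1&+1&-1&-1\end{pmatrix}.
\]
In $M_1$ the leftmost $(+,-)$ column is column $2$; in $M_2$ it is column $1$. Flipping gives the \emph{same} image
\[
N=\begin{pmatrix}-1&-1&+1&+1\\+1&+1&-1&-1\end{pmatrix}
\]
in both cases. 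Your suggested patch (``leftmost $(+,-)$ column whose flip keeps the surplus nonnegative'') does not separate $M_1$ from $M_2$ either, since the global surplus after any single flip is the same number regardless of which column you choose. A rule that \emph{does} work is to match $(+,-)$ and $(-,+)$ columns as parentheses and flip the leftmost \emph{unmatched} $(+,-)$; but that is not what you wrote, and it takes a separate argument.

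The paper avoids this pitfall by using a different injection: rather than flipping a single column, it swaps rows $a$ and $b$ in \emph{every} column to the right of a threshold $k$, where $k$ is the first index at which the running difference $s_\ell=\sum_{j\le \ell}(b_{aj}-b_{bj})$ hits $r_a-r_b+2$. Because $s_\ell$ moves in steps of $0,\pm 2$ from $0$ to $r_a-r_b+4$, such a $k$ exists and is the first time the prefix difference reaches that value in the image as well, so the map is invertible on its range. This ``reflect the tail'' construction is the substantive idea you are missing; your single-column flip cannot be repaired without importing some prefix/matching structure of this kind.
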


Theorem \ref{monot} is based on Lemmas \ref{lem:Dalton} and \ref{Dtran}, especially
Lemma \ref{lem:monot}.
The key idea is to connect two ``comparable'' vectors $\br$ and ${\bf{r'}}$ with a sequence of
vectors, each of which is %a swap of its neighbors. wrong grammar
adjusted slightly from its neighbors.

Let ${\bf{r}} = (r_1, \cdots, r_m)$. We may assume that $r_1 \equiv r_2 \equiv \cdots \equiv r_m \equiv n$ (mod 2),
for otherwise $A_{\bf{r},{\bf{c}}}(m,n)$ would be empty.
%We expect to make a sequence of swaps in order to connect $\br$ and ${\bf{r'}}$.
%The following statement belongs to Muirhead (\cite{Muir1902}, pp.147), if we halve and subtract $n$ from every components:

\begin{lem}\label{lem:Dalton}
Let ${\bf{r}} = (r_1, \cdots, r_m)$ and ${\bf{r'}} = (r_1', \cdots, r_m')$ be integer vectors with
$\br' \succeq  \br$.
If for any $i$, $r_i \equiv r_i' \equiv n$ (mod 2), then it is always
possible to find a series of finite vectors connecting $\br$ and ${\bf{r'}}$:
$$\br'=\br^{(l)} \succeq \cdots \succeq \br^{(1)} \succeq \br^{(0)}={\bf{r}},$$
such that the difference between any two successive vectors is a permutation of $(+2,-2,0,0,\cdots,0)$.
\end{lem}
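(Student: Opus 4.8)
The plan is to prove this as a discrete analogue of the classical Muirhead/Hardy–Littlewood–Pólya lemma, namely that $\br' \succeq \br$ implies $\br'$ can be obtained from $\br$ by a finite sequence of "Robin Hood" transfers, adapted here to respect the parity constraint $r_i \equiv r_i' \equiv n \pmod 2$. A single transfer that moves one unit from a larger coordinate to a smaller one would violate parity; so the atomic move must be moving \emph{two} units, i.e. adding a permutation of $(+2,-2,0,\dots,0)$, which is exactly the increment produced by swapping a $\pm1$ pair in a column. So the statement is: the "two-unit transfer" partial order, restricted to the parity class of $\br$, has the same comparability relation as majorization.

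\textbf{Setup and first reduction.} First I would record that majorization is invariant under permuting coordinates, and that both $\br$ and $\br'$ lie in the same coset $\br + 2\Z^m$ (this is where $r_i \equiv r_i' \equiv n$ is used — without it $A_{\br,\bc}(m,n)$ is empty and there is nothing to connect). So it suffices to produce the chain within $\br + 2\Z^m$. I would set $\bd = \br' - \br \in 2\Z^m$, note $\sum_i d_i = 0$ by condition (i) of majorization, and aim to peel off one $(+2,-2,0,\dots,0)$-type increment at a time, decreasing some nonnegative potential.

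\textbf{The inductive step.} Assume $\br' \neq \br$ (else the chain is trivial). Pick indices after sorting: let $j$ be the largest index with $r'_{[j]} > r_{[j]}$ and $i$ the smallest index with $r'_{[i]} < r_{[i]}$ — both exist since $\sum d_i = 0$ and $\bd \neq 0$, and one checks $i < j$ using the partial-sum inequalities $\sum_{t\le k} r'_{[t]} \ge \sum_{t \le k} r_{[t]}$. Now form $\br^{(1)}$ from $\br$ by adding $+2$ in coordinate $i$ and $-2$ in coordinate $j$; the point to verify is the two inequalities $\br' \succeq \br^{(1)} \succeq \br$. The second is immediate (moving mass from a larger sorted coordinate to a smaller one, a standard elementary check on partial sums). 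For the first, I would use that $r'_{[i]} \le r'_{[j]}$ together with $r'_{[i]} < r_{[i]} \le r_{[i]} + 2 \le r'_{[i]}$... more precisely: since $d_i, d_j \in 2\Z$ and $r'_{[i]} < r_{[i]}$, we have $r'_{[i]} \le r_{[i]} - 2$, so adding $2$ to $r_{[i]}$ keeps it $\le r'_{[i]} + 2 \le \dots$; symmetrically $r'_{[j]} \ge r_{[j]} + 2$, so subtracting $2$ from $r_{[j]}$ keeps it $\ge r'_{[j]} - 2 \ge \dots$. Then $\br' \succeq \br^{(1)}$ follows because all partial sums only moved toward those of $\br'$ (again the standard elementary majorization computation, being careful that the sorted order may change only in a controlled way). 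Finally, the potential $\Phi(\br) = \sum_i |r_i - r'_i| = \|\bd\|_1$ strictly decreases by $4$ at each step, so the process terminates at $\br'$ in $\|\br'-\br\|_1/4 = l$ steps.

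\textbf{The main obstacle.} The delicate point is not the arithmetic but the \emph{bookkeeping with the sorted rearrangements}: when I add $(+2,-2,0,\dots,0)$ to $\br$ the nondecreasing reordering can shuffle, so I must argue the partial-sum inequalities in the original sorted coordinates of $\br'$ rather than re-sorting carelessly. The clean way is the usual trick: $x \succeq y$ iff $\sum_i \phi(x_i) \ge \sum_i \phi(y_i)$ for all convex $\phi$ (Karamata), or equivalently iff $\sum_i (x_i - t)^+ \ge \sum_i (y_i - t)^+$ for all thresholds $t$; the quantity $\sum_i (r_i - t)^+$ changes in an obviously monotone way under a single up-by-2/down-by-2 transfer between a below-$j$ and an above-$i$ coordinate, which sidesteps all re-sorting issues. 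I would phrase the proof of $\br' \succeq \br^{(1)} \succeq \br$ through this threshold characterization, which makes each verification a one-line case check on the position of $t$ relative to the four relevant values.
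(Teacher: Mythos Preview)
The paper does not actually prove this lemma: it attributes the statement to Muirhead, calls it a Dalton transfer, and says it ``is not very difficult to establish.'' Your plan---peel off one $(+2,-2,0,\dots,0)$ increment at a time while the potential $\|\br'-\br\|_1$ drops by $4$---is precisely the classical argument the paper is gesturing at, so strategically you are on target.

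However, your specific choice of indices in the inductive step is wrong and can overshoot. Take (in nonincreasing order) $\br'=(4,0,0,-4)$ and $\br=(2,2,-2,-2)$; these have the same parity and $\br'\succeq\br$. Your rule gives $j=3$ (largest index with $r'_{[j]}>r_{[j]}$) and $i=2$ (smallest index with $r'_{[i]}<r_{[i]}$); adding $+2$ at $i$ and $-2$ at $j$ yields $\br^{(1)}=(2,4,-4,-2)$, whose sorted partial sums are $4,6,4,0$ against $4,4,4,0$ for $\br'$. Thus $\br^{(1)}\succeq\br'$, not $\br'\succeq\br^{(1)}$: the chain is no longer monotone. (Already for $m=2$, with $\br'=(4,-4)$ and $\br=(2,-2)$, your claimed inequality $i<j$ fails outright.) The threshold/Karamata reformulation you invoke is a fine device for \emph{checking} majorization once a correct step is chosen, but it cannot repair an overshooting step.

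The standard fix, which is what Muirhead does, is to pick the \emph{first} discrepancy rather than the extreme ones: let $p$ be the smallest index with $r'_{[p]}\neq r_{[p]}$ (necessarily $r'_{[p]}>r_{[p]}$, hence $\ge r_{[p]}+2$ by parity) and $q>p$ the smallest index with $r'_{[q]}<r_{[q]}$; then add $+2$ at position $p$ and $-2$ at position $q$ of $\br$. With this choice one verifies directly that every partial sum of $\br^{(1)}$ lies between the corresponding partial sums of $\br$ and $\br'$, so $\br'\succeq\br^{(1)}\succeq\br$, and $\|\br'-\br^{(1)}\|_1=\|\br'-\br\|_1-4$ gives termination. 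In the example above this produces $(4,0,-2,-2)$ and then $\br'$ in two steps.
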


Lemma \ref{lem:Dalton} should probably be credited to Muirhead (\cite{Muir1902}, pp.147), and is not very difficult to establish.
The inserted sequence in Lemma \ref{lem:Dalton} is called Dalton's transfer (\cite{IneMaj11}, pp.7).
Observe that for any $k$, $\br^{(k+1)} \succneqq \br^{(k)}$ means that $\br^{(k)}$ is obtained from
reducing 2 from a bigger component of $\br^{(k+1)}$ and adding 2 to a smaller component of $\br^{(k+1)}$.
(We call the above operation an ``adjustment''.)
In case $\br^{(k+1)}$ and $\br^{(k)}$ are both row-sum vectors,
suppose $M^{(k+1)}$ is a matrix in $A_{\br^{(k+1)},{\bf{c}}}(m,n)$.
Equivalent to the above ``adjustment'', a corresponding matrix
$M^{(k)} \in A_{\br^{(k)},{\bf{c}}}(m,n)$ is
obtained by swapping a pair of 1 and $-1$ in a same column of
$M^{(k+1)}$, such that the 1 is taken from a row of $M^{(k+1)}$ with bigger row-sum and
the $-1$ is taken from a row with smaller row-sum. Note that such a pair of
$\pm1$ always exists given that $\br^{(k+1)} \succneqq \br^{(k)}$.

Without loss of generality, the following lemma shows that
if $\br^{(k+1)}$ and $\br^{(k)}$ are
two successive vectors with $\br^{(k+1)} \succneqq \br^{(k)}$
and the only difference of them is
an adjustment at the first two entries,
then it always so holds that
$|A_{\br^{(k)},{\bf{c}}}(m,n)| \geq |A_{\br^{(k+1)},{\bf{c}}}(m,n)|$
for any fixed column-sum vector $\bc$.

%in the sense one majorizes the other, there is a reverse monotonicity in terms of the amount of
%column-good $\pm1$ matrices with corresponding row-sum vectors.

%Therefore the case of neighbors, i.e., one move of swap, is the only essential:

\begin{lem}\label{Dtran}
For row-sum vector $\br=(r_1,r_2,r_3,\cdots,r_m)$ with $r_1\geq r_2$, and any fixed column-sum vector ${\bf{c}}$,  we have $|A_{\br,{\bf{c}}}(m,n)|\geq|A_{{\bf{r'}},{\bf{c}}}(m,n)|$, where ${\bf{r'}}=(r_1+2,r_2-2,r_3,\cdots,r_m)$.
\end{lem}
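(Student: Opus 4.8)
The plan is to exhibit an injection from $A_{\br',\bc}(m,n)$ into $A_{\br,\bc}(m,n)$, where $\br' = (r_1+2, r_2-2, r_3, \dots, r_m)$ and $r_1 \ge r_2$. Since both sets consist of $\pm1$ matrices with the same column-sum vector $\bc$, the matrices differ only in how the first two rows are arranged within each column. Fix a matrix $M' \in A_{\br',\bc}(m,n)$ and look only at its top two rows: each column $j$ is one of the four types $\binom{+1}{+1}$, $\binom{-1}{-1}$, $\binom{+1}{-1}$, $\binom{-1}{+1}$. Let $a, b, c, d$ be the numbers of columns of these four types in $M'$. The key observation is that altering rows $1$ and $2$ of a column while preserving its column sum means: we may only swap a $\binom{+1}{-1}$ column into a $\binom{-1}{+1}$ column or vice versa (the $\binom{+1}{+1}$ and $\binom{-1}{-1}$ columns are rigid). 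Decreasing $r_1$ by $2$ and increasing $r_2$ by $2$ (to pass from $\br$-data back to $\br'$-data — or rather the reverse) corresponds exactly to converting one $\binom{+1}{-1}$ column into a $\binom{-1}{+1}$ column.

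Concretely, I would argue as follows. Among matrices with the given column-sum vector $\bc$, rows $3, \dots, m$ and the ``set of columns that carry a nonzero first-two-rows contribution'' are essentially fixed once we fix the combinatorial type; the only freedom is, on the subset $S$ of columns whose first two entries sum to zero, to choose which receive pattern $\binom{+1}{-1}$ and which receive $\binom{-1}{+1}$. If $|S| = s$ and $M'$ uses $p'$ columns of type $\binom{+1}{-1}$, then $r_1 = (\text{fixed part}) + p' - (s - p')$, so the number of valid matrices over all choices on $S$ that realize row-sum $r_1$ in row $1$ is $\binom{s}{p'}$ where $p' = (s + r_1 - \text{const})/2$; for $\br'$ the corresponding count is $\binom{s}{p'+1}$. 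It therefore suffices to show $\binom{s}{p'} \ge \binom{s}{p'+1}$, i.e. $p' \ge s - p' - 1$, i.e. $2p' \ge s - 1$. Translating back, $2p' - s = r_1 - \text{const}$ and $2(p'+1) - s = r_1' - \text{const} = r_1 + 2 - \text{const}$; combined with $r_1 \ge r_2$ and the constraint that the two row-sums share the fixed offset, one gets $2p' - s \ge -1$, which is exactly the needed inequality. Summing the binomial inequality $\binom{s}{p'} \ge \binom{s}{p'+1}$ over all admissible combinatorial types (each contributing a disjoint block of matrices on both sides) yields $|A_{\br,\bc}(m,n)| \ge |A_{\br',\bc}(m,n)|$.

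The main obstacle will be setting up the bookkeeping cleanly: I must partition $A_{\br,\bc}(m,n)$ and $A_{\br',\bc}(m,n)$ into matching ``blocks'' indexed by the data that is held fixed (the entries in rows $3,\dots,m$, the positions and patterns of the rigid columns, and the set $S$), verify that within each block the count is exactly a single binomial coefficient $\binom{s}{\cdot}$, and check that the center-of-mass inequality $2p' \ge s-1$ really does follow from $r_1 \ge r_2$ rather than needing extra hypotheses — here the parity condition $r_i \equiv n \pmod 2$ and the fact that $\bc$ is held fixed are what make the offsets line up. Once the block decomposition is in place, the inequality is the elementary unimodality statement $\binom{s}{p} \ge \binom{s}{p+1}$ for $p \ge (s-1)/2$, so no delicate estimation is needed. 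An alternative, perhaps cleaner, route is to define the injection directly: given $M' \in A_{\br',\bc}(m,n)$, since $r_1 + 2 > r_2 - 2$ forces row $1$ to dominate row $2$ strictly enough that some column $j$ has $a_{1j} = +1$, $a_{2j} = -1$ while the mirrored pattern count is strictly smaller, pick the leftmost such ``excess'' column by a canonical rule and flip it to $\binom{-1}{+1}$; injectivity follows because the flipped column can be recovered as the leftmost column realizing the reverse discrepancy. I would present the block-counting version as the main proof and remark on the injection as intuition.
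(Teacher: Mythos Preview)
Your block-counting argument is correct and complete once the bookkeeping is written out: fixing rows $3,\dots,m$ determines, for each column $j$, the required value of $a_{1j}+a_{2j}\in\{-2,0,2\}$ (hence the rigid columns and the set $S$), and within each block the counts for $\br$ and $\br'$ are $\binom{s}{p}$ and $\binom{s}{p+1}$ with $2p-s=(r_1-r_2)/2\ge 0$, so unimodality gives the inequality blockwise. This is a genuinely different route from the paper. The paper constructs a single explicit injection $\psi:A_{\br',\bc}(m,n)\to A_{\br,\bc}(m,n)$ by a reflection on the first two rows: for $M=(b_{ij})\in A_{\br',\bc}(m,n)$ one sets $s_\ell=\sum_{j\le\ell}(b_{1j}-b_{2j})$, takes $k$ to be the first index with $s_k=r_1-r_2+2$ (which exists since $s_n=r_1-r_2+4$ and the steps are $0,\pm2$), and swaps rows $1$ and $2$ on columns $k+1,\dots,n$. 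Your approach has the advantage of making the source of the inequality transparent (it is literally $\binom{s}{p}\ge\binom{s}{p+1}$), at the cost of the block decomposition; the paper's approach avoids any decomposition or summation and yields a uniform bijective-style map, but the injectivity check (recovering $k$ from the image as the first time the new partial sums hit $r_1-r_2+2$) is left implicit.

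One caution: the alternative injection you sketch at the end --- flip the leftmost $\binom{+1}{-1}$ column of $M'$ to $\binom{-1}{+1}$ --- is \emph{not} the paper's map, and your stated inverse (``recover it as the leftmost column realizing the reverse discrepancy'') does not work as written, since after the flip there may already be $\binom{-1}{+1}$ columns to its left. If you want a direct injection, the paper's suffix-swap with a first-passage rule is the clean way to do it; otherwise stick with your counting proof, which is solid.
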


\begin{proof}
%We'll use the technique of broken-line similar as the Catalan number, to create an injection $F()$ from $A_{{\bf{r'}},\bc}(m,n)$ to $A_{\br,{\bf{c}}}(m,n)$. %Obviously $A_{{\bf{r'}},{\bf{c}}}(m,n)\neq\varnothing$, then $\br_1\equiv \br_2\equiv\cdots\equiv \br_m$(mod 2).
We prove it by constructing an injection $\psi$ from $A_{{\bf{r'}},{\bf{c}}}(m,n)$ to $A_{{\bf{r}},{\bf{c}}}(m,n)$.

Take any $\pm1$ matrix $M=(b_{ij})_{1\leq i\leq m}^{1\leq j\leq n}\in A_{{\bf{r'}},{\bf{c}}}(m,n)$.
Since $\sum_{j=1}^n b_{1j}=r_1+2, \sum_{j=1}^n b_{2j}=r_2-2$, we have $\sum_{j=1}^n (b_{1j}-b_{2j}) = r_1-r_2+4 \geq 4$.
Define $s_l=\sum_{j=1}^l(b_{1j}-b_{2j})$. Thus the difference $d_l \triangleq s_l-s_{l-1}=\pm2$ or 0 and
$s_1 \leq 2$. Let $k>0$ be the minimum index such that $s_k=r_1-r_2+2$.
Then $\psi(M)=(a_{ij})_{1\leq i\leq m}^{1\leq j\leq n}$ is simply the matrix obtained by
exchanging the entries of the first two rows of matrix $M$ after the $k$th column, i.e.,
$$a_{ij}=\begin{cases}b_{2j},\quad i=1~and~j>k,\\b_{1j},\quad i=2~and~j>k,\\b_{ij},\quad otherwise.\end{cases}$$
Verification of $\psi(M)\in A_{\br,{\bf{c}}}(m,n)$ is straightforward, and it is easy to see that $\psi$ is an injection.

%So the first row-sum of $F(M)$ is
%$$\sum_{j=1}^n a_{1j}=\sum_{j=1}^k b_{1j}+\sum_{j=k+1}^n b_{2j}=\sum_{j=1}^k b_{1j}+\sum_{j=k+1}^n b_{1j}+(sa_k-s\theta_n)=r_1.$$
%The second row-sum is the sum of the rest entries in first two rows of $M$, as the swap does not affect the rest row-sum and all column sums, hence $F(M)\in A_{\br,{\bf{c}}}(m,n)$.
%On the other hand, trivially we've got $F(F(M))=M$ for any matrix $M\in A_{{\bf{r'}},\bc}(m,n)$, so $F$ is an injection. Thus %$|A_{\br,{\bf{c}}}(m,n)|\geq|A_{{\bf{r'}},{\bf{c}}}(m,n)|$.
\end{proof}
%Trivially on symmetry, the decreasing criterion also holds on column-sum vector majorization.

Consequently, based on Lemmas \ref{lem:Dalton} and \ref{Dtran}, the following fact  is true.
\begin{lem}\label{lem:monot}
For two arbitrary integer row-sum vectors $\br, {\bf{r'}}$ and
any column-sum vector $\bc$, if $\br' \succeq  {\bf{r}}$,
then $|A_{{\bf{r}}, \bc}(m,n)| \geq |A_{\br',{\bf{c}}}(m,n)|$.
\end{lem}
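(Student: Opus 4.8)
The plan is to deduce Lemma \ref{lem:monot} from Lemmas \ref{lem:Dalton} and \ref{Dtran} by a telescoping argument along a Dalton transfer chain. First I would invoke Lemma \ref{lem:Dalton}: since $\br' \succeq \br$ and (as remarked) we may assume all entries of $\br$ and $\br'$ are congruent to $n$ modulo $2$, there is a finite chain
$$\br'=\br^{(l)} \succeq \br^{(l-1)} \succeq \cdots \succeq \br^{(1)} \succeq \br^{(0)}=\br,$$
in which each consecutive difference $\br^{(k+1)}-\br^{(k)}$ is a permutation of $(+2,-2,0,\dots,0)$; that is, $\br^{(k)}$ is obtained from $\br^{(k+1)}$ by an ``adjustment'' that subtracts $2$ from some entry and adds $2$ to another entry that is no larger. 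It then suffices to prove, for each single adjustment step and each fixed column-sum vector $\bc$, that $|A_{\br^{(k)},\bc}(m,n)| \geq |A_{\br^{(k+1)},\bc}(m,n)|$, because chaining these $l$ inequalities gives $|A_{\br,\bc}(m,n)| \geq |A_{\br',\bc}(m,n)|$.

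Next I would reduce a single adjustment step to exactly the situation handled by Lemma \ref{Dtran}. The adjustment from $\br^{(k+1)}$ to $\br^{(k)}$ decreases one coordinate, say position $p$, by $2$ and increases another, say position $q$, by $2$, with the value at $p$ before the step at least as large as the value at $q$ before the step. Permuting the rows of a matrix is a bijection $A_{\br,\bc}(m,n) \to A_{\sigma\br,\bc}(m,n)$ that preserves $\bc$ (it permutes the rows but leaves every column sum unchanged), so $|A_{\br,\bc}(m,n)|$ depends only on the multiset of entries of $\br$ up to... more precisely, applying the transposition that moves coordinates $p,q$ into positions $1,2$ turns the step into the exact hypothesis of Lemma \ref{Dtran}: a pair $(r_1,r_2)$ with $r_1 \geq r_2$ being replaced by $(r_1+2, r_2-2)$, with the remaining coordinates untouched. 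Lemma \ref{Dtran} then yields $|A_{\br^{(k)},\bc}(m,n)| \geq |A_{\br^{(k+1)},\bc}(m,n)|$ for that step.

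Finally I would assemble the telescoped chain of inequalities:
$$|A_{\br,\bc}(m,n)| = |A_{\br^{(0)},\bc}(m,n)| \geq |A_{\br^{(1)},\bc}(m,n)| \geq \cdots \geq |A_{\br^{(l)},\bc}(m,n)| = |A_{\br',\bc}(m,n)|,$$
which is the claim. The only genuinely delicate point is the ``without loss of generality'' step: one must check that relabeling rows really is a $\bc$-preserving bijection between the relevant matrix families and that the adjustment in Lemma \ref{lem:Dalton} always pairs a larger coordinate (decremented) with a smaller-or-equal one (incremented), so that after the transposition the orientation $r_1 \geq r_2$ demanded by Lemma \ref{Dtran} indeed holds; everything else is a routine finite induction on the length $l$ of the transfer chain. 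I expect no serious obstacle beyond stating these bookkeeping details cleanly.
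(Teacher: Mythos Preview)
Your proposal is correct and matches the paper's approach exactly: the paper states Lemma~\ref{lem:monot} as an immediate consequence of Lemmas~\ref{lem:Dalton} and~\ref{Dtran}, having already noted the ``without loss of generality'' reduction (via row permutation) to the case where the adjustment occurs in the first two coordinates. Your write-up simply spells out the telescoping and the row-permutation bijection that the paper leaves implicit.
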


Now Theorem \ref{monot} is proved as a corollary of Lemma \ref{lem:monot},
as $A_{\br}(m,n)=\bigcup\limits_{\br:|c_i|\leq 1}A_{{\bf{r}}, \bc}(m,n)$
and $A_{\br'}(m,n)=\bigcup\limits_{\br:|c_i|\leq 1}A_{{\bf{r'}}, \bc}(m,n)$.

Note that Theorem \ref{monot} is about comparable row-sum vectors $\br$ and $\bf{r'}$.
What if they are incomparable? We consider the asymptotic behavior and find it even better.

\begin{theorem}\label{lim}
For fixed $m$ and any row-sum vectors $\br=(r_1,r_2,\cdots,r_m), \bf{r'}=(r'_1,r'_2,\cdots,r'_m)$
with $r_1 \equiv \cdots \equiv r_m \equiv r'_1 \equiv \cdots \equiv r'_m$ (mod $2$),
we have $\alpha_\br(m,n) \sim \alpha_{{\bf{r'}}}(m,n)$, i.e.,
$$\lim_{n\rightarrow+\infty}\frac{\alpha_\br'(m,n)}{\alpha_{{\bf{r}}}(m,n)}=1,$$
where as approaching infinity $n$ is required to have the same parity with $r_1$ to ensure that the denominator is not zero.
\end{theorem}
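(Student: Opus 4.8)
The statement compares $\alpha_{\br}(m,n)$ and $\alpha_{\br'}(m,n)$ for two arbitrary (possibly incomparable) row-sum vectors of the same parity. The plan is to reduce the incomparable case to the comparable case already handled by Theorem \ref{monot}, using a ``sandwich'' argument: for any fixed $\br$ and $\br'$ of the common parity, one can find a vector $\br''$ that majorizes both $\br$ and $\br'$ (for instance, a vector with all mass concentrated in as few coordinates as the parity constraint permits, such as $(n, -n+2\varepsilon, \ldots)$ adjusted for parity, or more simply a vector of the form $(\sigma - (m-1)p, p, p, \ldots, p)$ where $p \equiv n \pmod 2$ and $\sigma = \sum r_i = \sum r'_i$), and a vector $\br_\flat$ that is majorized by both (the ``flattest'' vector of the right sum and parity, with entries as equal as possible). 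Then Theorem \ref{monot} gives
$$\alpha_{\br''}(m,n) \leq \alpha_{\br}(m,n), \alpha_{\br'}(m,n) \leq \alpha_{\br_\flat}(m,n).$$
So it suffices to show $\alpha_{\br_\flat}(m,n) \sim \alpha_{\br''}(m,n)$, i.e. that the ratio of the largest to the smallest of these counts tends to $1$; squeezing then forces $\alpha_\br(m,n) \sim \alpha_{\br'}(m,n)$.

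**Reducing to adjacent vectors.** To prove $\alpha_{\br_\flat}(m,n)/\alpha_{\br''}(m,n) \to 1$, I would chain through a Dalton-transfer sequence as in Lemma \ref{lem:Dalton}, so it is enough to bound the ratio $\alpha_{\br^{(k)}}(m,n)/\alpha_{\br^{(k+1)}}(m,n)$ for two neighbors differing by a permutation of $(+2,-2,0,\ldots,0)$, and observe that the length $l$ of the chain is bounded by a constant depending only on $m$ and $|\br_\flat - \br''|_1$, which is $O(1)$ in $n$ since each entry of these extreme vectors is $O(n)$ — actually the chain length is $O(n)$, so more care is needed: I would instead argue that $\br''$ and $\br_\flat$ are each within Dalton-distance $O(1)$ of $\br$ after possibly splitting off pieces, OR directly estimate the two-sided ratio. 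The cleaner route: show that for a single adjustment at the first two coordinates, $\alpha_{\br'}(m,n) \geq (1 - o(1))\, \alpha_{\br}(m,n)$, by strengthening the injection $\psi$ of Lemma \ref{Dtran} to a near-bijection — i.e. bound the size of the complement of the image of $\psi$ by a lower-order term. Concretely, $\psi$ fails to be surjective only on matrices where the prefix sums $s_l = \sum_{j\le l}(b_{1j}-b_{2j})$ behave in a restricted way near the boundary; counting such matrices and comparing against $\alpha_\br(m,n)$ via the column decomposition should give a gap factor of the form $1 - O(1/\sqrt{n})$ or $1 - O(1/n)$.

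**Main obstacle.** The hard part will be controlling the chain length versus the per-step loss: a naive chain from the flattest to the most concentrated vector has $\Theta(n)$ steps, and a per-step factor of $1 - c/\sqrt n$ compounds to something like $e^{-c\sqrt n}$, which is useless. So I expect the real work is to show that $\alpha_\br(m,n)$, as a function of $\br$ ranging over vectors of fixed sum and parity, varies by only a bounded factor — or better, to get an asymptotic formula $\alpha_\br(m,n) \sim C \cdot \alpha(m,1)^n / n^{\lfloor (m-1)/2\rfloor + 1/2}$ with $C$ independent of $\br$ — via a local central limit theorem for the random-walk model (each column contributes an i.i.d.\ increment; $\alpha_\br(m,n)$ counts walks of $n$ steps landing at $\br$, and the LCLT says this is asymptotically independent of the target point in a bounded region). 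That probabilistic route bypasses the chaining entirely and is probably how the authors proceed, using the bijection philosophy of Theorem \ref{thm:bij3}. I would develop that: interpret $A_\br(m,n)$ as $n$-step walks with steps ranging over the $\binom{m}{\lfloor m/2\rfloor}$ (or so) allowed column types, apply a multidimensional LCLT, and read off that the leading asymptotics do not see $\br$.

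**Fallback.** If the LCLT machinery is to be avoided, I would at least prove the weaker two-sided bound $\alpha_{\br_\flat}(m,n)/\alpha_{\br''}(m,n) = 1 + o(1)$ by a smarter choice of $\br''$: take $\br''$ to differ from $\br_\flat$ in only $O(1)$ coordinates and by $O(1)$ total — which is possible precisely because we only need $\br''$ to majorize the \emph{two} given vectors $\br,\br'$, not everything, and $\br,\br'$ are themselves fixed (hence $O(1)$) so their ``join'' in the majorization lattice differs from the flat vector by $O(1)$. Then the Dalton chain between $\br_\flat$ and $\br''$ has $O(1)$ steps, each costing a factor $1 - o(1)$ by the strengthened Lemma \ref{Dtran}, and the product is still $1 - o(1)$. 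This is the step I would actually write out carefully.
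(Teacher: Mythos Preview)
Your high-level architecture matches the paper's: reduce to $m$ even (the paper does this via Observation~\ref{observe:rowsum}, which you omit but is a one-line patch), note that then $\sum r_i = \sum r'_i = 0$ so any two row-sum vectors share the common minorant $\mathbf{0}$, and hence it suffices to handle a single Dalton step $\br \to \br' = (r_1+2, r_2-2, r_3,\ldots,r_m)$ with $r_1 \ge r_2$. Since $\br, \br'$ are fixed, the chain has $O(1)$ steps, exactly as you observe in your fallback; your worry about $\Theta(n)$-length chains was a red herring.

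Where you diverge is the adjacent-step estimate. You propose either strengthening the injection $\psi$ of Lemma~\ref{Dtran} to a near-bijection, or invoking a local CLT. The paper does neither. Instead it decomposes both $\alpha_\br(m,n)$ and $\alpha_{\br'}(m,n)$ according to the column-sum pattern $\mathbf{c}_i$ of the lower $m-2$ rows: writing $h_i = |A_{\br^-,\mathbf{c}_i}(m-2,n)|$ with $\br^- = (r_3,\ldots,r_m)$, both quantities become sums $\sum_i (\text{multinomial in } i)\cdot h_i$, the two multinomial factors differing only in how the top two rows are completed. The key device is then the elementary weighted-ratio inequality of Lemma~\ref{eineq}: since $\{h_i\}$ is monotone (by Lemma~\ref{lem:monot} applied to columns) and the ratio of the two multinomials is monotone in $i$ as well, the $h_i$ weights can be stripped, leaving a ratio of bare multinomial sums that Vandermonde collapses to
\[
\frac{\binom{n}{(n+r_1)/2+1}\binom{n}{(n+r_2)/2-1}}{\binom{n}{(n+r_1)/2}\binom{n}{(n+r_2)/2}} \longrightarrow 1.
\]
This is shorter and more elementary than either of your routes. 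Your injection-strengthening idea is plausible but would require a separate reflection-type count of the non-image of $\psi$, which you have not supplied; the LCLT route is morally correct but imports machinery the paper deliberately avoids.
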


The proof of Theorem \ref{lim} involves an elementary inequality,
which may be shown by induction and for brevity we omit its proof.

\begin{lemma}\label{eineq}
If $a_1,a_2,\cdots,a_k$ are reals, $b_1, b_2, \cdots, b_k, c_1,c_2,\cdots,c_k$ are positive reals, such that $c_1\geq c_2\geq c_3\geq\cdots\geq c_n$ and
$\frac{a_1}{b_1}\geq\frac{a_2}{b_2}\geq\cdots\geq\frac{a_n}{b_n}$.
Then
$$\frac{\sum_{i=1}^k a_i c_i}{\sum_{i=1}^k b_i c_i}\geq\frac{\sum_{i=1}^k a_i}{\sum_{i=1}^k b_i}.$$
\end{lemma}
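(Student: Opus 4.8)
The plan is to prove Lemma \ref{eineq} by a one-line symmetrization of a double sum; this is arguably more transparent than the induction mentioned above. Throughout I read $k$ for $n$ in the hypotheses, which is plainly what is intended, so that $c_1 \geq c_2 \geq \cdots \geq c_k$ and $\frac{a_1}{b_1} \geq \frac{a_2}{b_2} \geq \cdots \geq \frac{a_k}{b_k}$. First I would note that since the $b_i$ and $c_i$ are all positive, $\sum_{i=1}^k b_i c_i > 0$ and $\sum_{i=1}^k b_i > 0$, so the asserted inequality is equivalent, after clearing these (positive) denominators, to
$$\left(\sum_{i=1}^k a_i c_i\right)\left(\sum_{j=1}^k b_j\right)\;\geq\;\left(\sum_{i=1}^k b_i c_i\right)\left(\sum_{j=1}^k a_j\right),$$
that is, to $S := \sum_{i=1}^{k}\sum_{j=1}^{k}(a_i b_j - a_j b_i)\,c_i \geq 0$.

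Next I would symmetrize $S$. Interchanging the names of the summation indices $i$ and $j$ shows $S = -\sum_{i,j}(a_i b_j - a_j b_i)\,c_j$, and averaging the two resulting expressions for $S$ gives
$$S\;=\;\frac12\sum_{i=1}^{k}\sum_{j=1}^{k}(a_i b_j - a_j b_i)(c_i - c_j).$$
The key observation is that, since $b_i b_j > 0$, one has $a_i b_j - a_j b_i = b_i b_j\bigl(\tfrac{a_i}{b_i} - \tfrac{a_j}{b_j}\bigr)$, so $a_i b_j - a_j b_i$ has the same sign as $\tfrac{a_i}{b_i} - \tfrac{a_j}{b_j}$. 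For $i < j$ the monotonicity of the ratios makes $a_i b_j - a_j b_i \geq 0$ while the monotonicity of the $c$'s makes $c_i - c_j \geq 0$; for $i > j$ both factors are $\leq 0$; and for $i = j$ the term vanishes. Hence every summand in the last display is nonnegative, so $S \geq 0$, which is exactly the claim.

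I do not expect a genuine obstacle here; the only points to be careful about are that the denominators one clears are genuinely positive (so the direction of the inequality is preserved), that the index relabelling producing the skew-symmetric form is carried out correctly, and that it is precisely the positivity of the $b_i$ that converts the given ordering of the ratios $a_i/b_i$ into an ordering of the cross-differences $a_i b_j - a_j b_i$. An essentially equivalent alternative, if one prefers to avoid the double sum, is to put $\lambda = \frac{\sum_{j} a_j}{\sum_{j} b_j}$ and $d_i = a_i - \lambda b_i$: then $\sum_{i} d_i = 0$, and $d_i/b_i = a_i/b_i - \lambda$ is decreasing in $i$, so $d_i \geq 0$ for $i$ up to some threshold index $t$ and $d_i \leq 0$ afterwards; since $c_i \geq c_t$ for $i \leq t$ and $c_i \leq c_t$ for $i > t$, the quantity $\sum_i d_i c_i = \sum_i d_i(c_i - c_t)$ is a sum of nonnegative terms, giving $\sum_i a_i c_i \geq \lambda \sum_i b_i c_i$. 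For the final write-up I would use the symmetrization version.
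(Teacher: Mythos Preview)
Your argument is correct. Both the symmetrization proof and the threshold variant with $\lambda=\frac{\sum a_j}{\sum b_j}$ are valid; the reading of $k$ for $n$ is the obvious typo fix, and you handle the positivity of the denominators and of the $b_i$ properly so that the inequality direction and the sign analysis go through.

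The paper itself does not give a proof: it merely remarks that the lemma ``may be shown by induction and for brevity we omit its proof.'' So your approach is genuinely different in method. The inductive route the paper alludes to presumably appends the $k$th term to a $(k-1)$-term inequality and checks that the ratio moves the right way. Your symmetrization (essentially a Chebyshev-sum rearrangement argument) is shorter and more transparent, since it exhibits the difference of the two sides as an explicitly nonnegative double sum $\tfrac12\sum_{i,j} b_i b_j\bigl(\tfrac{a_i}{b_i}-\tfrac{a_j}{b_j}\bigr)(c_i-c_j)$; this also makes the equality case visible. Either your main proof or the $\lambda$-threshold alternative would be a clean replacement for the paper's omitted argument.
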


\begin{proof}[Proof of Theorem \ref{lim}]
Due to Observation \ref{observe:rowsum}, we only discuss the case $m$ is even.
Since $m$ is even, every column-sum is zero, which in turn yields that the sum of all row-sums is zero.
Thus we may assume that $\br$ and $\br'$ are comparable.
Otherwise, we introduce a third row-sum vector $\bf{0}$ with $\br \succeq \bf{0}$ and $\br' \succeq \bf{0}$,
and utilizes the fact  $\lim_{n\rightarrow+\infty}\frac{\alpha_\br'(m,n)}{\alpha_{{\bf{r}}}(m,n)}
=\lim_{n\rightarrow+\infty}\frac{\alpha_\br'(m,n)}{\alpha_{\bf{0}}(m,n)}
\lim_{n\rightarrow+\infty}\frac{\alpha_{\bf{0}}(m,n)}{\alpha_{{\bf{r}}}(m,n)}$.
W.L.O.G., we may further assume $\br' \succeq \bf{r}$ and they are adjacent neighbors as in the statement of Lemma \ref{Dtran}. That is,
$\br=(r_1,r_2,r_3,\cdots,r_m)$ and ${\bf{r'}}=(r_1+2,r_2-2,r_3,\cdots,r_m)$ with $r_1\geq r_2$.
(Of course, again, $r_1,r_2,r_3,\cdots,r_m$ all have the same parity because each of them is the sum of
$n$ $\pm1$'s. )

Theorem \ref{monot} tells us $\frac{\alpha_\br'(m,n)}{\alpha_{{\bf{r}}}(m,n)} \leq 1$.
We are going to show that $\lim_{n\rightarrow+\infty} \frac{\alpha_\br'(m,n)}{\alpha_{{\bf{r}}}(m,n)} \geq 1$.

In order to construct a column-good matrix $M$ in $A_{\br}(m,n)$ (resp. $A_{\br'}(m,n)$) so as to estimate $\alpha_{{\bf{r}}}(m,n)$
(resp. $\alpha_{{\bf{r'}}}(m,n)$), we start from the lower $m-2$ rows. Let the lower $m-2$ rows of $M$ be matrix $M^{-}$.
Note that $M^{-}$ is a $\pm 1$ matrix with row-sum vector $r_3,\cdots,r_m$. While $M^{-}$ does not have to be column-good,
it has to be close to it. Specifically, the column-sum vector of $M^{-}$ must be a permutation of
$$
{\bf{c}_i} =(\underbrace{2,2,\cdots,2}_{i-\frac{r_1+r_2}{2}},
\underbrace{0,0,\cdots,0}_{n+\frac{r_1+r_2}{2}-2i},
\underbrace{-2,\cdots,-2}_{i}),
$$
where $i$ varies appropriately. Each component of $~{\bf{c}}_i$ may be viewed as the sum of the entries in a truncated column of a
matrix $M$ in $A_{\br}(m,n)$ (or, resp., in $A_{\br'}(m,n)$).

Now we construct $M \in A_{\br}(m,n)$. Once $M^-$ is fixed, in order to make the whole column 0-sum,
each pair of entries in the top two rows of $M$ that corresponds to a column with lower entries
summing up to $2$ or $-2$ in ${\bf{c}}_i$ is completely decided.
The other ${n+\frac{r_1+r_2}{2}-2i}$ pairs that correspond to columns with lower entries
summing up to $0$ in ${\bf{c}}_i$, however, have a total of
$\binom{{n+\frac{r_1+r_2}{2}-2i}}{\frac{n+r_1}{2}-i}$ options to decide the relative positions of
$\binom{+1}{-1}$ and $\binom{-1}{+1}$ pairs to meet the top two row-sums $r_1$ and $r_2$.

Below is an illustration of the top two rows of $M$ completed from $M^-$.
%Now we give a representation for $A_{{\bf{r'}}}(m,n)$ and $A_\br(m,n)$. Consider first two rows of a matrix $M\in A_{\bf{0}}(m,n)$, let $u$ be the number of pairs $\binom{+1}{+1}$ in the first two rows, then in matrix $M$,
\begin{center}%comment2
\begin{tabular}{|c|c|c|c|c|}
  \hline
   number of such pairs in $M$ &$i-\frac{r_1+r_2}{2}$&$i$&$\frac{n+r_1}{2}-i$&$\frac{n+r_2}{2}-i$ \\ \hline
   \multirow{2}{*}{pairs in top 2 rows}& $-1$ & $+1$ & $+1$ & $-1$\\
   {}&$-1$ & $+1$ & $-1$ & +1 \\ \hline%\hline
   {sum of the bottom $m-2$ entries in the same column}&$+2$ & $-2$ & \multicolumn{2}{|c|}0 \\ \hline
   number of such columns in $M-$ &$i-\frac{r_1+r_2}{2}$&$i$&\multicolumn{2}{|c|}{$n+\frac{r_1+r_2}{2}-2i$}\\
  \hline
\end{tabular}
\end{center}

Now let $h_i=|A_{{\br^{-}},{\bf{c}}_i}(m-2,n)|$, where ${\br^{-}}=(r_3,r_4,\cdots,r_m)$.
To summarize our analysis, we have ${\alpha_{{\bf{r}}}(m,n)}={\sum_{i}\binom{n}{i,i-\frac{r_1+r_2}{2},\frac{n+r_1}{2}-i,\frac{n+r_2}{2}-i}h_{i}}$.
Similarly, ${\alpha_{{\bf{r'}}}(m,n)}={\sum_{i}\binom{n}{i,i-\frac{r_1+r_2}{2},\frac{n+r_1}{2}-i+1,\frac{n+r_2}{2}-i-1}h_{i}}$.

If the roles of rows and columns are interchanged in Lemma \ref{lem:monot}, clearly, $\{h_i \}$ decreases with $i$.
In addition, $$\binom{n}{i,i-\frac{r_1+r_2}{2},\frac{n+r_1}{2}-i+1,\frac{n+r_2}{2}-1-i-1}
/\binom{n}{i,i-\frac{r_1+r_2}{2},\frac{n+r_1}{2}-i,\frac{n+r_2}{2}-i}=\frac{n+r_2-2i}{n+r_1+2-2i}$$
also decreases with $i$.
Hence, by Lemma \ref{eineq} may be omitted from both the numerator and denominator in the following estimations:
$$\begin{aligned}
\frac{\alpha_{{\bf{r'}}}(m,n)}{\alpha_\br(m,n)}
&=\frac{\sum_{i}\binom{n}{i,i-\frac{r_1+r_2}{2},\frac{n+r_1}{2}+1-i,\frac{n+r_2}{2}-1-i}h_{i}}
{\sum_{i}\binom{n}{i,i-\frac{r_1+r_2}{2},\frac{n+r_1}{2}-i,\frac{n+r_2}{2}-i}h_{i}}\\
&\geq\frac{\sum_{i}\binom{n}{i,i-\frac{r_1+r_2}{2},\frac{n+r_1}{2}+1-i,\frac{n+r_2}{2}-1-i}}
{\sum_{i}\binom{n}{i,i-\frac{r_1+r_2}{2},\frac{n+r_1}{2}-i,\frac{n+r_2}{2}-i}}\\
&=\frac{\binom{n}{\frac{n+r_1}{2}+1}\binom{n}{\frac{n+r_2}{2}-1}}{\binom{n}{\frac{n+r_1}{2}}\binom{n}{\frac{n+r_2}{2}}}.
\end{aligned}
$$

The last step above makes use of the Vandermonde Convolution (\cite[page44]{Com74}) in both the top and the bottom.

Finally observe that
$\lim\limits_{n\rightarrow\infty} \frac{\alpha_{{\bf{r'}}}(m,n)}{\alpha_\br(m,n)}
\geq \lim\limits_{n\rightarrow\infty} \frac{\binom{n}{\frac{n+r_1}{2}+1}\binom{n}{\frac{n+r_2}{2}-1}}{\binom{n}{\frac{n+r_1}{2}}\binom{n}{\frac{n+r_2}{2}}}
=\lim\limits_{n\rightarrow\infty}\frac{n-r_1}{n+r_1+2}\cdot\frac{n+r_2}{n-r_2+2}=1$.
\end{proof}

Now we are ready to prove Theorem \ref{porp}, which claims that whenever $m$ is fixed,
$\lim_{n\rightarrow\infty}\frac{\alpha(m,n+2)}{\alpha(m,n)}=\alpha(m,1)^2.$

%\begin{cor}For fixed integer $m$, $$\lim_{n\rightarrow\infty}\frac{\alpha(m,n+2)}{\alpha(m,n)}=\alpha(m,1)^2.$$\end{cor}
\begin{proof}[Proof of Theorem \ref{porp}]
For every row-sum vector $\br$, $$A_\br(m,n+2)=\bigcup\limits_{x,y\in A(m,1)}\{(x,y,M)|M\in A_{\br-x-y}(m,n)\},$$
shere $x,y$ are $m\times1$ column vectors.
By Theorem \ref{lim}, for any $\br,\mathbf{x},\mathbf{y}$, $\alpha_{\br-\mathbf{x}-\mathbf{y}}(m,n)$
are asymptotically equivalent.
Thus
$$\alpha_\br(m,n+2)\sim\bigcup\limits_{\mathbf{x},\mathbf{y}\in A(m,1)}|\{(\mathbf{x},\mathbf{y},M)|M\in A_{\br}(m,n)\}|=\alpha(m,1)^2 \alpha_{\br}(m,n)$$
as $n\rightarrow\infty$.
The conclusion follows from
$\alpha(m,n)=\sum\limits_{\br:|r_i|\leq 1}\alpha_\br(m,n)$ with a little careful analysis.
\end{proof}

Recall that by Observation \ref{0}, for any $m$ and any even number $2n$, it always holds that $\alpha(2m,2n)=\alpha(2m-1,2n)$.
\begin{cor}\label{oddeven} For any fixed $m$, letting odd number $2n-1$ approaching infinity, the following limit of ratio is true.
$$\lim_{n\rightarrow\infty}\frac{\alpha(2m,2n-1)}{\alpha(2m-1,2n-1)}=
\frac{|\{(r_1,\cdots,r_{2m}):r_i=\pm1,\sum r_i=0\}|}{|\{(r_1,\cdots,r_{2m-1}):r_i=\pm1\}|}
=\frac{\binom{2m}m}{2^{2m-1}}.$$
\end{cor}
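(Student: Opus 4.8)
The plan is to reduce the whole ratio to Theorem \ref{lim} (asymptotic equivalence of $\alpha_\br$ across row-sum vectors of the same parity) together with the exact bijection of Observation \ref{observe:rowsum}, so that the only thing left after passing to the limit is the size of the two index sets of admissible row-sum vectors. First I would identify those index sets precisely. Using $\alpha(m,n)=\sum_{\br:|r_i|\le 1}\alpha_\br(m,n)$: since the number of columns $2n-1$ is odd, every row-sum is a sum of an odd number of $\pm1$'s, hence odd, so $|r_i|\le 1$ forces $r_i=\pm1$, and therefore $\alpha(2m-1,2n-1)=\sum_{\br\in\{\pm1\}^{2m-1}}\alpha_\br(2m-1,2n-1)$, a sum over $2^{2m-1}$ vectors. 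For $2m$ rows, additionally each column-sum is a sum of an even number of $\pm1$'s, so goodness forces every column-sum to be $0$; summing over the $2n-1$ columns then forces $\sum_i r_i=0$, so $\alpha(2m,2n-1)=\sum_\br \alpha_\br(2m,2n-1)$ ranges over the $\binom{2m}{m}$ vectors $\br\in\{\pm1\}^{2m}$ with $\sum r_i=0$. This already explains the two combinatorial quantities in the statement.

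Next I would replace each of these finite sums by a fixed multiple of a single reference term. Fix $\br_0\in\{\pm1\}^{2m-1}$. All its entries are odd and $2n-1$ is odd, so Theorem \ref{lim} applies and gives $\alpha_\br(2m-1,2n-1)\sim\alpha_{\br_0}(2m-1,2n-1)$ for every $\br\in\{\pm1\}^{2m-1}$. Since the number of such $\br$ is the fixed constant $2^{2m-1}$ (independent of $n$), a sum of boundedly many asymptotically equal positive sequences is asymptotic to that number of copies of one of them, so $\alpha(2m-1,2n-1)\sim 2^{2m-1}\,\alpha_{\br_0}(2m-1,2n-1)$. The identical argument, applied with a fixed reference $\br_1\in\{\pm1\}^{2m}$ with $\sum(\br_1)_i=0$, yields $\alpha(2m,2n-1)\sim\binom{2m}{m}\,\alpha_{\br_1}(2m,2n-1)$.

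Finally I would link the two reference quantities by a single judicious choice. Take $\br_0=(\underbrace{1,\dots,1}_{m},\underbrace{-1,\dots,-1}_{m-1})$, so that $\sum(\br_0)_i=1$. Then $\br_0^{+}=(\br_0,-1)$ has $m$ ones and $m$ minus-ones and total sum $0$, i.e.\ it is a legitimate choice of $\br_1$. Observation \ref{observe:rowsum}, applied with the even number of rows equal to $2m$ and with $2n-1$ columns, gives a bijection $A_{\br_0}(2m-1,2n-1)\cong A_{\br_0^{+}}(2m,2n-1)$, hence $\alpha_{\br_0}(2m-1,2n-1)=\alpha_{\br_1}(2m,2n-1)$ exactly. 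Dividing the two asymptotics from the previous paragraph then gives $\alpha(2m,2n-1)/\alpha(2m-1,2n-1)\sim \binom{2m}{m}/2^{2m-1}$, so the limit is $\binom{2m}{m}/2^{2m-1}$.

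I do not expect a genuine obstacle here; the only points requiring care are bookkeeping. One must keep $m$ fixed so that the number of summands $2^{2m-1}$ and $\binom{2m}{m}$ does not grow with $n$, which is what makes ``a finite sum of asymptotically equal sequences is asymptotic to their count times one of them'' legitimate; and one must ensure the reference value $\alpha_{\br_0}(2m-1,2n-1)$ is nonzero for all large $n$ with the right parity, which is already part of the setup of Theorem \ref{lim} once the parity of the number of columns matches that of the entries of $\br_0$.
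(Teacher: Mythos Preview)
Your proof is correct and uses essentially the same ingredients as the paper's: the decomposition $\alpha(m,n)=\sum_{\br}\alpha_\br(m,n)$, Theorem \ref{lim} to make all terms asymptotically equal, and the bijection of Observation \ref{observe:rowsum} to pass between $2m-1$ and $2m$ rows. The only organizational difference is that the paper first lifts $A(2m-1,2n-1)$ bijectively to $\bigcup_\br A_\br(2m,2n-1)$ (over the $2^{2m-1}$ vectors $\br=(r_1,\dots,r_{2m-1},-\sum r_i)$ with $r_1,\dots,r_{2m-1}\in\{\pm1\}$) and then invokes Theorem \ref{lim} once on $2m$ rows for both numerator and denominator, whereas you apply Theorem \ref{lim} separately on $2m-1$ and $2m$ rows and then link a single pair of reference values via the bijection; the two orderings are equivalent.
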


\begin{proof}
Be aware that there is a natural bijection between $A(2m-1,2n-1)$ and $\bigcup_{\br} A_\br(2m,2n-1)$, where
$\br$ is taken over all row-sum vectors $\br=(r_1, \cdots, r_{2m-1}, r_{2m})$ such that
$r_1, \cdots, r_{2m-1} \in \{1, -1\}$ and $r_{2m}=-\sum_{i=1}^{2m-1} r_i$. (This bijection may be viewed as
a generalization of Corollary \ref{cor:bij}.)

On the other hand, $A(2m,2n-1)$ could be written as $A(2m,2n-1)=\bigcup_{\br} A_\br(2m,2n-1)$, where
$\br$ is taken over all row-sum vectors $\br=(r_1, \cdots, r_{2m-1}, r_{2m})$ such that
$r_1, \cdots, r_{2m} \in \{1, -1\}$ and $\sum_{i=1}^{2m-1} r_i=0$.

By Theorem \ref{lim}, the limit of fraction $\lim_{n\rightarrow\infty}\frac{\alpha(2m,2n-1)}{\alpha(2m-1,2n-1)}$
is decided by the ratio of the amounts of row-sum vectors to take for the top and for the bottom.
Therefore the conclusion follows.
\end{proof}

\subsection{Proof of Theorem \ref{appro}}

Recall that Theorem \ref{appro} claims that for fixed $m$, $\alpha(m,n)\sim\Theta\left(\frac{\binom m{m/2}^n}{n^{\frac{m-1}{2}}}\right)$ while
$n$ goes to infinity. In view of Observation \ref{0} and Corollary \ref{oddeven} and Observation \ref{0} hold, in the following
we only consider the case $2 | \gcd (m, n)$. We shall establish the magnitude of $\alpha(m,n)$ by giving lower and upper bounds of constance difference.

\begin{theorem}\label{infn} (lower bound.)
For any fixed even $m$, there exists a constant $C_1(m)$ that depends only on $m$, so that for any even $n$ big enough,
$\alpha(m,n) > C_1(m)\cdot\frac{\binom m{m/2}^n}{n^{\frac{m-1}{2}}}$.
\end{theorem}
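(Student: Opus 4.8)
The plan is to identify $\alpha_{\mathbf{0}}(m,n)$ as the largest among the quantities $\alpha_\br(m,n)$ using Theorem~\ref{monot}, and then to bound it below by combining the total count $\sum_\br \alpha_\br(m,n) = \binom{m}{m/2}^n$ with an elementary concentration estimate that confines almost all of this mass to only $O(n^{(m-1)/2})$ row-sum vectors. Since $m$ and $n$ are both even, every column sum and every row sum of a good matrix vanishes, so $A(m,n) = A_{\mathbf{0}}(m,n)$ and $\alpha(m,n) = \alpha_{\mathbf{0}}(m,n)$; moreover a $\pm1$ matrix is column-good precisely when each of its columns is one of the $\binom{m}{m/2}$ balanced columns, whence partitioning all such matrices by their row-sum vector gives
$$\binom{m}{m/2}^n \;=\; \sum_{\br} \alpha_\br(m,n),$$
the sum running over those integer vectors $\br$ that occur as a sum of $n$ balanced $\pm1$ vectors; each such $\br$ satisfies $\sum_i r_i = 0$.

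Next I would apply Theorem~\ref{monot}: the zero vector is majorized by every vector with coordinate sum $0$, so $\br \succeq \mathbf{0}$ for each $\br$ appearing above, and therefore $\alpha_{\mathbf{0}}(m,n) \geq \alpha_\br(m,n)$ for all of them, i.e.\ $\alpha_{\mathbf{0}}(m,n) = \max_\br \alpha_\br(m,n)$. To count the significant $\br$'s, choose the $n$ columns independently and uniformly among the $\binom{m}{m/2}$ balanced columns; then each coordinate $r_i$ of the resulting random row-sum vector is a sum of $n$ independent copies of a mean-zero $\pm1$ variable, so $\mathbb{E}\,r_i = 0$ and $\mathrm{Var}\,r_i = n$. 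Taking $K = \sqrt{2m}$, Chebyshev's inequality together with a union bound over the $m$ coordinates shows that at least half of the $\binom{m}{m/2}^n$ column-good matrices satisfy $\|\br\|_\infty < K\sqrt{n}$.

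Finally, a vector $\br$ with $\sum_i r_i = 0$ and $\|\br\|_\infty < K\sqrt{n}$ is determined by its first $m-1$ entries, so there are at most $(2K\sqrt{n} + 1)^{m-1}$ of them; hence
$$\tfrac12\,\binom{m}{m/2}^n \;\leq\; \sum_{\br\,:\,\|\br\|_\infty < K\sqrt{n}} \alpha_\br(m,n) \;\leq\; (2K\sqrt{n}+1)^{m-1}\,\alpha_{\mathbf{0}}(m,n),$$
so that $\alpha(m,n) = \alpha_{\mathbf{0}}(m,n) \geq C_1(m)\,\binom{m}{m/2}^n / n^{(m-1)/2}$ for all sufficiently large even $n$, with $C_1(m)=\tfrac12(3K)^{-(m-1)}$ depending only on $m$ (using $2K\sqrt n+1\le 3K\sqrt n$ once $K\sqrt n\geq 1$). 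As $\alpha(m,1)=\binom{m}{m/2}$ and $\lfloor(m-1)/2\rfloor+\tfrac12=(m-1)/2$ for even $m$, this is exactly the claimed inequality.

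The main obstacle — and the reason Theorem~\ref{monot} is indispensable here — is upgrading ``the total mass $\binom{m}{m/2}^n$ is spread over many $\br$'' to ``a single $\br$ carries a $\Theta(n^{-(m-1)/2})$ fraction of it'': the mass must be forced to concentrate at the vertex $\mathbf{0}$, which Theorem~\ref{monot} supplies, rather than being smeared over a ball of radius $\Theta(\sqrt n)$; and the exponent $(m-1)/2$ rather than $m/2$ is obtained precisely because the constraint $\sum_i r_i = 0$ confines the relevant $\br$ to a hyperplane. The reduction to even $n$, the identity for $\binom{m}{m/2}^n$, and the Chebyshev bound are all routine.
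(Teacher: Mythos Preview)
Your argument is correct and follows essentially the same route as the paper: identify $\alpha(m,n)=\alpha_{\mathbf 0}(m,n)$ for even $m,n$, use Theorem~\ref{monot} to recognize $\alpha_{\mathbf 0}(m,n)$ as the maximum of the $\alpha_\br(m,n)$, observe that the total mass over all column-good matrices is $\binom{m}{m/2}^n$, show that at least half of this mass lies over row-sum vectors in a box of side $O(\sqrt n)$ inside the hyperplane $\sum_i r_i=0$, and conclude by the pigeonhole-type inequality $\max \geq \text{(half the total)}/\text{(number of boxes)}$.

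The one genuine difference is in the concentration step: the paper invokes a CLT-type lemma (Lemma~\ref{cltc}) to obtain a constant $c=C(\tfrac{1}{2m})$ with $\Pr(|r_i|\geq c\sqrt n)<\tfrac{1}{2m}$, whereas you use Chebyshev's inequality directly with $K=\sqrt{2m}$. Your choice is more elementary and yields an explicit constant $C_1(m)=\tfrac12(3\sqrt{2m})^{-(m-1)}$, while the paper's constant is left implicit in the CLT black box; otherwise the two proofs are line-for-line parallel.
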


To prove Theorem \ref{infn}, we will need the following lemma in probabilistic theory.
\begin{lemma}\label{cltc}
Let $X_1,X_2,X_3,\cdots$ be \emph{i.i.d.} random variables with $Pr(X_i=1)=Pr(X_i=-1)=0.5$ for any $i$.
Then for any $\varepsilon > 0$, $\exists C(\varepsilon)>1$ such that for any $n \in \mathbb{N}_+$,
$Pr(|\sum_{j=1}^n X_j| \geq C(\varepsilon)\sqrt n)<\varepsilon$.
\end{lemma}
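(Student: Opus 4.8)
The statement is an easy uniform-in-$n$ tail bound for simple random walk, and the plan is to prove it directly from the Central Limit Theorem together with an elementary estimate for the small-$n$ regime. First I would record that $S_n \triangleq \sum_{j=1}^n X_j$ satisfies $\mathbb{E}[S_n] = 0$ and $\mathbb{E}[S_n^2] = n$, so $S_n/\sqrt n$ has mean $0$ and variance $1$ for every $n$. By the Central Limit Theorem, $S_n/\sqrt n$ converges in distribution to a standard normal $Z$, hence $\Pr(|S_n/\sqrt n| \geq t) \to \Pr(|Z| \geq t)$ for each continuity point $t$. Since $\Pr(|Z|\geq t) \to 0$ as $t\to\infty$, we may fix $t_0$ with $\Pr(|Z| \geq t_0) < \varepsilon/2$; then by the CLT there is an $N = N(\varepsilon)$ such that $\Pr(|S_n| \geq t_0\sqrt n) < \varepsilon$ for all $n \geq N$.

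Next I would handle the finitely many exceptional indices $n = 1, 2, \dots, N-1$ by brute force. For each such $n$, $|S_n| \leq n = n^{1/2}\cdot n^{1/2} \leq n^{1/2}\sqrt{N}$, so choosing the constant $C(\varepsilon) = \max\{t_0, \sqrt{N}\} + 1 > 1$ makes the event $\{|S_n| \geq C(\varepsilon)\sqrt n\}$ empty for every $n < N$ (its probability is $0 < \varepsilon$) while also ensuring $C(\varepsilon) \geq t_0$ so that the bound from the previous paragraph applies for all $n \geq N$. Combining the two regimes, $\Pr(|S_n| \geq C(\varepsilon)\sqrt n) < \varepsilon$ for every $n \in \mathbb{N}_+$, and $C(\varepsilon) > 1$ as required.

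An alternative route that avoids invoking the CLT as a black box is to use a single concrete inequality uniform in $n$: for instance Hoeffding's inequality gives $\Pr(|S_n| \geq \lambda\sqrt n) \leq 2e^{-\lambda^2/2}$ for all $n$ and all $\lambda > 0$, and then one simply picks $\lambda = C(\varepsilon)$ large enough that $2e^{-C(\varepsilon)^2/2} < \varepsilon$ and $C(\varepsilon) > 1$. Either way, the mild subtlety — and the only place any care is needed — is the uniformity in $n$: the CLT only controls large $n$, so one must separately dispose of the bounded range $n < N$, which is trivial because there the walk cannot exceed $n$ in absolute value. I would present the Hoeffding version if the paper is willing to cite a concentration inequality, and the CLT-plus-finite-cases version otherwise; in both cases the argument is short and the write-up is essentially the two paragraphs above.
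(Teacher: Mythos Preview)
Your proposal is correct and follows the same approach the paper indicates: the paper simply states that the lemma ``is a direct consequence of the central limit theorem'' and cites a reference, without writing out any details. Your write-up supplies exactly those details, and in particular your observation that the CLT alone only handles large $n$ and that the finitely many small $n$ must be treated separately (trivially, since $|S_n|\leq n$) is the one point of care that the paper leaves implicit; the Hoeffding alternative you mention is a clean way to get uniformity in $n$ in a single line, though the paper does not invoke it.
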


Lemma \ref{cltc} is a direct consequence of the central limit theorem (see, for instance, \cite{Fel68}) or \cite{DR10}.

\begin{proof}[Proof of Theorem \ref{infn}]
Recall that $\alpha(m,n)=A_{\mathbf{0}}(m,n)$ as $n$ is even. Let $\Omega=\Omega(m,n)$ denote
the collection of all column-good $\pm1$ matrices.
Set $\cS=\{(r_1,r_2,\cdots,r_m):\sum_{j=1}^m r_j=0; ~2|r_i; ~\forall i\in[m-1], |r_i|<c\sqrt n\}$
to be a collection of row-sum vectors with limited size for every entry except possibly the very last one.
We investigate $D=D_{\cS}=\bigcup_{\br\in \cS} A_\br(m,n)$.
Thus $D$ is a subset of $\Omega$ that contains $A_{\mathbf{0}}(m,n)$.
By Theorem \ref{monot}, $|A_{\mathbf{0}}(m,n)| \geq |A_{\br}(m,n)|$ for any $\br\in \cS$ (noting that
$\mathbf{0}$ and $\br$ are comparable). So,
$$
\alpha(m,n)=|A_{{0}}(m,n)| \geq \frac1{|\cS|}\sum_{\br\in \cS}|A_\br(m,n)|=\frac1{|\cS|}\left|\bigcup_{\br\in \cS}A_\br(m,n)\right|=\frac{|D|}{|\cS|}.
$$

Claim: $|D| > |\Omega|/2$. In fact, we prove that
the probability $P(D) > \frac{1}{2}$ in  the
classical probability space $(\Omega, 2^\Omega, P)$,
where every single matrix $M \in \Omega$
has equal probability $p(M) = \frac1{|\Omega|}=\binom m{m/2}^{-n}$
to show up and as usual $P(A)=\sum_{M \in A} Pr(M \in A)=\sum_{M \in A} p(M) = \frac{|A|}{|\Omega|}$ for every $A \in 2^\Omega$.
This model implies that if a matrix $M$ is to be constructed row by row, and entry by entry, then for each $a_{ij}$,
$Pr(a_{ij}=1)=Pr(a_{ij}=0)=0.5$. The entries $a_{i1}, a_{i2}, \cdots, a_{in}$ on the $i$th row $r_i (M)$ may be viewed
as \emph{i.i.d.} random variables $X_1, X_2, \cdots, X_n$.  Thus for $\varepsilon=\frac{1}{2m}$, applying Lemma \ref{cltc},
there exists a constant $c=C(\frac1{2m})$ depending only on $m$, such that for any $n$,
$Pr(|r_i(M)| \geq c \sqrt n)< \frac{1}{2m}$. Therefore,

$$\begin{aligned}
P(D)&=1-P(\Omega\setminus D)=1-Pr(M \in \Omega\setminus D) \\
&=1-Pr(\exists i\in[m],|r_i(M)|\geq c\sqrt n)\\
&\geq1-\sum_{i=1}^mP(|r_i(M)|\geq c\sqrt n)\\
&>1-\sum_{i=1}^m\frac1{2m}=\frac12.
\end{aligned}$$

Accordingly,
$$\begin{aligned}
\alpha(m,n)&\geq \frac{|D|}{|\cS|}
> \frac{|\Omega|/2}{\left|\{(r_1,r_2,\cdots,r_m):\sum_{i=1}^m r_i=0;  ~2|r_i; ~\forall i\in[m-1], |r_i|<c\sqrt n\}\right|}\\
&\geq\frac{\binom m{m/2}^n /2}{(1+c\sqrt n)^{m-1}}\\
&=\frac1{2 (2c)^{m-1}} \cdot\frac{\binom m{m/2}^n}{n^{\frac{m-1}{2}}} \ (\text{for $n$ big enough}).\end{aligned}$$
Here  $C_1(m)=\frac1{2 (2 C(\frac1{2m}))^{m-1}}$ is a constant determined by $m$.
%Thus let $C_1=\frac1{2c+1}$, Theorem \ref{infn} is proven.%with the order of lower bound of $\alpha(m,n)$ determined.
\end{proof}

Next, in the proof of the upper bound, again it suffices to consider only the case $m$ and $n$ are both even.
%Next we determine the order of upper bound. Analogously, we only consider the case $m\equiv n\equiv 0~(\textrm{mod }2)$.

\begin{theorem}\label{supn}
(upper bound.)
For any fixed even $m$, there exists a constant $C_2(m)$ that depends only on $m$, so that for any even $n$ big enough,
$\alpha(m,n) \leq C_2(m)\cdot \frac{\binom m{m/2}^n}{n^{\frac{m-1}{2}}}$.
\end{theorem}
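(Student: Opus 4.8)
The plan is to bound $\alpha(m,n)=|A_{\mathbf 0}(m,n)|$ from above by controlling, for each admissible row-sum vector $\br$ of the top $m-2$ rows, the number of ways to complete the matrix, and then summing over $\br$. Concretely, reuse the construction from the proof of Theorem~\ref{lim}: a matrix in $A_{\mathbf 0}(m,n)$ is built from a $\pm1$ matrix $M^-$ on its lower $m-2$ rows whose column-sum vector is a permutation of some $\bc_i=(2,\dots,2,0,\dots,0,-2,\dots,-2)$, after which the top two rows are forced on the $\pm2$-columns and have $\binom{n-2i}{\,n/2-i\,}$ choices on the $0$-columns (here I specialize the formula of Theorem~\ref{lim} to $\br=\mathbf 0$, so $r_1=\dots=r_m=0$). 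Thus
\begin{equation*}
\alpha(m,n)=\sum_{i}\binom{n}{i,\,i,\,n/2-i,\,n/2-i}\,h_i,
\qquad h_i=|A_{\br^-,\bc_i}(m-2,n)|,
\end{equation*}
where $\br^-$ ranges over the zero vector of length $m-2$ in this reduced case but more generally I will carry the sum over $\br^-$ as well. The idea is to iterate this reduction: $h_i$ is itself an enumeration of $(m-2)$-row near-column-good matrices, which can be peeled two rows at a time down to $1$ or $2$ rows, where Observation~\ref{observe:1} gives a central binomial coefficient.

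First I would set up the single-step estimate cleanly: show that $\sum_i \binom{n}{i,i,n/2-i,n/2-i}\le C\cdot 2^{2n}/n^{3/2}$ for an absolute constant $C$ (this is just the leading-order asymptotics of a product of two central-type binomials summed via Vandermonde, exactly the quantity $\binom{n}{n/2}\sum_i\binom{n/2}{i}^2$ up to reindexing, whose order is $4^n/n^{3/2}$), and simultaneously bound $h_i\le \max_{\br}|A_{\br}(m-2,n)|$, which by Theorem~\ref{monot} is attained at the most-balanced $\br$, i.e. $h_i\le \alpha(m-2,n)\cdot(\text{poly in }n)$ accounting for the finitely many column-sum patterns $\bc_i$ and the permutations thereof. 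Actually the cleaner route: since $|A_{\br^-,\bc_i}(m-2,n)|\le |A_{\bc_i,\br^-}(n,m-2)|$ summed appropriately, bound $\sum_i h_i$ directly by $\alpha(m-2,n)$ times a polynomial factor of degree at most $(m-3)/2+1/2$ absorbed into the final exponent — this is the bookkeeping step where the exponent $\lfloor (m-1)/2\rfloor$ in the denominator is assembled from a $3/2$ at the outermost shell and a $1$ for each of the subsequent $(m-2)/2-1$ shells plus the base case.

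Then I would run the induction on $m$ (even, $m\ge 4$), with base case $m=2$ given by Observation~\ref{observe:1}: $\alpha(2,n)=\binom{n}{n/2}=\Theta(2^n/n^{1/2})$. The inductive step combines $\alpha(m,n)\le\big(C\cdot 4^n/n^{3/2}\big)\cdot\big(\text{const}\cdot\alpha(m-2,n)\big)$ with the hypothesis $\alpha(m-2,n)\le C_2(m-2)\binom{m-2}{(m-2)/2}^n/n^{(m-3)/2}$. The only arithmetic to check is that $4\cdot\binom{m-2}{(m-2)/2}=\binom{m}{m/2}^{?}$ — and here lies the main obstacle: $4\binom{m-2}{(m-2)/2}$ is \emph{not} equal to $\binom{m}{m/2}$ (e.g. $m=4$: $4\cdot 2=8\ne 6$), so a naive two-row peel overcounts the exponential base. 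The fix is that the top-two-row completion is not free: of the $\binom{n-2i}{n/2-i}$ choices only those consistent with a genuinely column-good global matrix survive, and more importantly the count $h_i$ is for matrices with a \emph{prescribed} near-balanced column vector $\bc_i$, not arbitrary column-good matrices, so the product telescopes to the correct base $\binom{m}{m/2}$ rather than $4\cdot\binom{m-2}{(m-2)/2}$. I expect to handle this by instead inducting on the full quantity $\sum_{\br^-,\,\bc}|A_{\br^-,\bc}(m-2,n)|$ with $\bc$ restricted to the patterns $\bc_i$ and tracking the exact exponential growth via a direct multinomial identity
\begin{equation*}
\binom{m}{m/2}^{n}=\sum_{i}\binom{n}{i,i,n/2-i,n/2-i}\binom{m-2}{(m-2)/2}^{\,n-2i}\binom{m-2}{(m-2)/2+1}^{?}\cdots,
\end{equation*}
i.e. expanding $\binom{m}{m/2}^n=\big(\sum_k\binom{m-2}{k}(\text{top-two-row contribution})\big)^n$ by the multinomial theorem; matching this against the upper-bound sum gives the clean telescoping. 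Establishing this matching — i.e. that the combinatorial upper bound sum is term-by-term dominated by the multinomial expansion of $\binom{m}{m/2}^n$ times a bounded polynomial factor — is the crux, and once it is in hand the polynomial factors multiply to $n^{\lfloor(m-1)/2\rfloor}$ and $C_2(m)$ is read off as a product of the constants accumulated at each shell together with the base constant from Observation~\ref{observe:1}.
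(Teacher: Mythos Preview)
Your proposal has a genuine gap at precisely the point you yourself flag. You correctly observe that peeling two rows and bounding naively gives exponential base $4\cdot\binom{m-2}{(m-2)/2}$ rather than $\binom{m}{m/2}$, and that these disagree. Your proposed remedy---matching the combinatorial sum term by term against a multinomial expansion of $\binom{m}{m/2}^n$---is not carried out: the identity you write contains question marks, and the quantity $h_i=|A_{\mathbf 0_{m-2},\bc_i}(m-2,n)|$ counts $(m-2)\times n$ matrices with row sums all zero and column sums in $\{-2,0,+2\}$. This is \emph{not} $\alpha(m-2,n)$ (which requires column sums $0$), nor any polynomial multiple of it, so there is no inductive hypothesis to invoke. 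The sentence ``bound $\sum_i h_i$ directly by $\alpha(m-2,n)$ times a polynomial factor'' is unjustified, and your own remark that ``establishing this matching \dots\ is the crux'' means the argument stops exactly where the difficulty lies.

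The paper's proof avoids induction on $m$ altogether and instead uses an averaging (or ``spreading'') argument. The set $\Omega$ of \emph{all} column-good $m\times n$ matrices has cardinality exactly $\binom{m}{m/2}^n$ and is partitioned as $\Omega=\bigcup_\br A_\br(m,n)$. Lemma~\ref{proportion} shows that every $\br$ in a set $\cR$ of size at least $n^{(m-1)/2}$ (those with each of the first $m-1$ coordinates bounded by $1+\sqrt n$) satisfies $|A_\br(m,n)|\ge C_3(m)\,|A_{\mathbf 0}(m,n)|=C_3(m)\,\alpha(m,n)$; its proof uses Theorem~\ref{monot} to reduce to the single extremal vector $\pmb\sigma=(s,-s,0,\dots,0)$ with $s=m\sqrt n$, and then the very two-row decomposition you describe, combined with Lemma~\ref{eineq} and Vandermonde, to bound the ratio below by $(1-s/n)^s\to e^{-m^2}$. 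Summing over $\br\in\cR$ and using $\sum_{\br\in\cR}|A_\br(m,n)|\le|\Omega|$ yields $\alpha(m,n)\le|\Omega|/(C_3(m)\,|\cR|)$ in one line---no telescoping, no induction, and the correct base $\binom{m}{m/2}$ enters directly as $|\Omega|^{1/n}$.
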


To prove Theorem \ref{supn}, we will employ argument similar to what is used in the proof of Theorem \ref{monot}.
First, let $\cR=\{(r_1,r_2,\cdots,r_m):\sum_{j=1}^mr_j=0; ~2|r_i; ~\forall i\in[m-1],|r_i|\leq 1+\sqrt n\}$
be a collection of row-sum vectors with limited size for every entry except possibly the $m$th one.
Thus,
$$|\cR|=\left|\{x:|x|\leq1+\sqrt n,2|x\}\right|^{m-1}\geq(\sqrt n)^{m-1}.$$

%First we prove that
Our goal is to find an upper bound for $\alpha(m,n)=|A_{\bf{0}}(m,n)|$.
The following lemma will be needed. Be reminded that for any  $\br \in \cR$, $1 \geq \frac{|A_\br(m,n)|}{|A_{\bf{0}}(m,n)|}$.
\begin{lemma}\label{proportion} Fix even integer $m$.
There exists a positive constant $c=C_3(m)$ depending only on $m$,
such that for all sufficiently large even number $n$ and for any $\br \in \cR$,
$$\frac{|A_\br(m,n)|}{|A_{\bf{0}}(m,n)|} > c >0.$$
%for sufficiently large even number $n$. %, by representing $A_r(m,n)$ and $A_0(m,n$ using $\{h_t\}$.
\end{lemma}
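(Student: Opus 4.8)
The role of Lemma \ref{proportion} is to power Theorem \ref{supn}: once we know $|A_\br(m,n)|>c\,|A_{\mathbf{0}}(m,n)|$ uniformly for $\br\in\cR$, we substitute this into the identity $\sum_\br|A_\br(m,n)|=\binom m{m/2}^n$ (the count of all column-good matrices sorted by row-sum vector, valid for even $m$), keep only the at least $(\sqrt{n})^{m-1}$ terms with $\br\in\cR$, and obtain $|A_{\mathbf{0}}(m,n)|\le c^{-1}(\sqrt{n})^{-(m-1)}\binom m{m/2}^n$, which is Theorem \ref{supn}. The companion bound $|A_\br(m,n)|\le|A_{\mathbf{0}}(m,n)|$ is free from Theorem \ref{monot} since $\br\succeq\mathbf{0}$, so the entire task is the matching lower bound. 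The plan is to connect $\br$ to $\mathbf{0}$ by a Dalton chain and to control, with an explicit rate, how far below $1$ the ratio $|A_{\br^{(k)}}(m,n)|/|A_{\br^{(k+1)}}(m,n)|$ can drop at one ``adjustment'', then multiply these estimates together.

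First I would upgrade Lemma \ref{lem:Dalton} to a \emph{coordinate-monotone} chain
$$\br=\br^{(0)}\succeq\br^{(1)}\succeq\cdots\succeq\br^{(\ell)}=\mathbf{0},$$
produced by repeatedly subtracting $2$ from a currently positive entry and adding $2$ to a currently negative one, until the flat vector is reached (which takes $\|\br\|_1/4$ steps, since each step lowers $\|\cdot\|_1$ by exactly $4$). All vectors in sight have entries congruent to $n$, hence to $0$, modulo $2$, so this is a genuine sequence of Dalton transfers, and the ``rich-to-poor'' rule keeps each consecutive pair comparable in exactly the form Lemma \ref{Dtran} needs. Two quantitative facts come for free: every entry of every $\br^{(k)}$ lies between $0$ and the corresponding entry of $\br$, so $\max_i|\br^{(k)}_i|\le\max_i|r_i|\le(m-1)(1+\sqrt{n})=:R$; and $\ell=\|\br\|_1/4\le\tfrac12(m-1)(1+\sqrt{n})$.

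Telescoping,
$$\frac{|A_\br(m,n)|}{|A_{\mathbf{0}}(m,n)|}=\prod_{k=0}^{\ell-1}\frac{|A_{\br^{(k)}}(m,n)|}{|A_{\br^{(k+1)}}(m,n)|}.$$
For one step, after permuting rows I may assume the adjustment is in the first two coordinates, so $\br^{(k)}=(r_1+2,r_2-2,r_3,\dots,r_m)$ and $\br^{(k+1)}=(r_1,r_2,r_3,\dots,r_m)$ with $r_1\ge r_2$ and $|r_1|,|r_2|\le R$. The key observation is that the estimate carried out inside the proof of Theorem \ref{lim} — the use of Lemma \ref{eineq} followed by the Vandermonde convolution \cite{Com74} — is a chain of honest inequalities valid at every fixed $n$, not merely the asymptotic relation displayed in that theorem, and it gives
$$\frac{|A_{\br^{(k)}}(m,n)|}{|A_{\br^{(k+1)}}(m,n)|}\ge\frac{\binom{n}{\frac{n+r_1}{2}+1}\binom{n}{\frac{n+r_2}{2}-1}}{\binom{n}{\frac{n+r_1}{2}}\binom{n}{\frac{n+r_2}{2}}}=\frac{n-r_1}{n+r_1+2}\cdot\frac{n+r_2}{n-r_2+2}.$$
Since $|r_1|,|r_2|\le R$ and $R/n\to 0$, for all large $n$ each of the two factors is at least $(n-R)/(n+R+2)$, so each factor of the telescoped product is at least $\big((n-R)/(n+R+2)\big)^2\in(0,1)$; raising to the power $\ell\le\tfrac12(m-1)(1+\sqrt{n})$ yields
$$\frac{|A_\br(m,n)|}{|A_{\mathbf{0}}(m,n)|}\ge\Big(\frac{n-R}{n+R+2}\Big)^{(m-1)(1+\sqrt{n})}.$$
The right-hand side no longer mentions $\br$; with $R=(m-1)(1+\sqrt{n})$ it has the shape $\big(1-\Theta(n^{-1/2})\big)^{\Theta(n^{1/2})}$ and tends to $e^{-2(m-1)^2}>0$ as $n\to\infty$. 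Hence for all sufficiently large even $n$ it exceeds, uniformly in $\br\in\cR$, the constant $c=C_3(m):=\tfrac12 e^{-2(m-1)^2}>0$, which proves the lemma.

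The main obstacle is the second step: producing a single Dalton chain that simultaneously (i) has length $O(\sqrt{n})$, (ii) keeps every intermediate sup-norm $O(\sqrt{n})$, and (iii) has each consecutive pair comparable in precisely the form needed to invoke the per-step estimate. The coordinate-monotone ``Robin Hood'' recipe achieves all three at once, but verifying that it never leaves the hypotheses of Lemma \ref{Dtran}, and double-checking that the bound borrowed from the proof of Theorem \ref{lim} is genuinely a finite-$n$ inequality rather than only the limiting statement $\alpha_{\br'}\sim\alpha_\br$, are the delicate points; everything after that is the elementary limit above.
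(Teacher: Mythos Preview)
Your argument is correct. The per-step inequality you borrow from the proof of Theorem \ref{lim} is indeed a genuine finite-$n$ estimate (the use of Lemma \ref{eineq} followed by Vandermonde is exact, not asymptotic), and your ``Robin Hood'' chain does keep every intermediate vector coordinate-wise between $\mathbf{0}$ and $\br$, so the hypotheses $r_1\ge r_2$ and $|r_1|,|r_2|\le R$ are available at every step. The telescoped bound $\big(\tfrac{n-R}{n+R+2}\big)^{(m-1)(1+\sqrt n)}\to e^{-2(m-1)^2}$ is then a clean computation.

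The paper proceeds differently and more directly. Rather than walking from $\br$ to $\mathbf{0}$ in $O(\sqrt n)$ Dalton steps, it observes that the single vector $\pmb{\sigma}=(s,-s,0,\dots,0)$ with $s=m\sqrt n$ majorizes every $\br\in\cR$ at once, so Theorem \ref{monot} gives $|A_\br(m,n)|\ge|A_{\pmb{\sigma}}(m,n)|$ in one stroke. It then compares $|A_{\pmb{\sigma}}(m,n)|$ to $|A_{\mathbf{0}}(m,n)|$ by the same Lemma \ref{eineq}/Vandermonde device, but here both vectors share $\br^-=(0,\dots,0)$, so the ratio collapses to $\binom{n}{(n+s)/2}^2/\binom{n}{n/2}^2\ge(1-s/n)^s\to e^{-m^2}$. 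What the paper's route buys is economy: one majorization and one ratio computation, with no need to track chain length or intermediate sup-norms. What your route buys is that it recycles Theorem \ref{lim}'s estimate verbatim and avoids introducing the auxiliary extremal vector $\pmb{\sigma}$; it also makes transparent exactly where the $O(\sqrt n)$ scale in $\cR$ is used (chain length times per-step deficit). Either way the constant is of the form $e^{-\Theta(m^2)}$.
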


\begin{proof}
For any fixed $\br=(r_1,r_2,\cdots,r_m) \in \cR$, to construct a column-good matrix $M$ in $A_{\br}(m,n)$ so as to estimate $|A_{\br}(m,n)|$,
we start from the lower $m-2$ rows. Let the lower $m-2$ rows of $M$ be matrix $M^{-}$.
Note that $M^{-}$ is a $\pm 1$ matrix with row-sum vector $r_3,\cdots,r_m$.
While $M^{-}$ does not have to be column-good,
it has to be close to it. Specifically, the column-sum vector of $M^{-}$ must be a permutation of
$$
{\bf{c}_i} =(\underbrace{2,2,\cdots,2}_{n/2-i},
\underbrace{0,0,\cdots,0}_{2i},
\underbrace{-2,\cdots,-2}_{n/2-i}),
$$
where $i$ varies appropriately.
Each component of $~{\bf{c}}_i$ may be viewed as the sum of the entries in a truncated column of a
matrix $M$ in $A_{\br}(m,n)$.

Once $M^-$ is fixed, in order to make the whole column 0-sum,
each pair of entries in the top two rows of $M$ that corresponds to a column with lower entries
summing up to $2$ or $-2$ in ${\bf{c}}_i$ is completely decided.
The other ${n+\frac{r_1+r_2}{2}-2i}$ pairs that correspond to columns with lower entries
summing up to $0$ in ${\bf{c}}_i$ have a total of
$\binom{{n+\frac{r_1+r_2}{2}-2i}}{\frac{n+r_1}{2}-i}$ options to decide the relative positions of
$\binom{+1}{-1}$ and $\binom{-1}{+1}$ pairs.

Now let $h_i=|A_{{\br^{-}},{\bf{c}}_i}(m-2,n)|$, where ${\br^{-}}=(r_3,r_4,\cdots,r_m)$.
We have
\begin{align}\label{alphar}
 {|A_{{\bf{r}}}(m,n)|}={\sum_{i}\binom{n}{i,i-\frac{r_1+r_2}{2},\frac{n+r_1}{2}-i,\frac{n+r_2}{2}-i}h_{i}}
\end{align}

Now we find a lower bound for $|A_{{\bf{r}}}(m,n)|$.
For $\br=(r_1,r_2,\cdots,r_m) \in \cR$,
$s_r=\sum_{r_i>0}r_i=-\sum_{r_i<0}r_i$ is half of the Manhattan distance \cite{RB79} of $\br$ from the origin.
When $n\geq m^2$, $s_r \leq(1+\sqrt n)(m-1) < m \sqrt n$.
Set $s = m \sqrt n$.
By Theorem \ref{monot},$|A_{\pmb{\sigma}}(m,n)|\leq A_\br(m,n)$,
where $\pmb{\sigma}\triangleq(s,-s,0,0,\cdots,0) \succeq (r_1,r_2,\cdots,r_m)$,
Hence,
$$\frac{|A_\br(m,n)|}{|A_{\bf{0}}(m,n)|} \geq \frac{|A_{\pmb{\sigma}}(m,n)|}{|A_{\bf{0}}(m,n)|}
=\frac{\sum_{i}\binom{n}{i,i,\frac {n+s}2-i,\frac {n-s}2-i} h_i}
{\sum_{i} \binom{n}{i,i,\frac n2-i,\frac n2-i}h_{i}}.
$$
(Note that (\ref{alphar}) may be applied to $|A_{\pmb{\sigma}}(m,n)|$ although $\pmb{\sigma}$ is not necessarily in $\cR$.
In the proof of Theorem \ref{monot}, similar argument is used regarding all possible row-sum vectors $\br$,
instead of being limited to $\cR$.)

By a symmetric version of Lemma \ref{lem:monot},  $\{h_i \}$ decreases with $i$.  In addition,
$$\protect\displaystyle \frac{\binom{n}{i,i,\frac {n+s}2-i,\frac {n-s}2-i}}{\binom{n}{i,i,\frac n2-i,\frac n2-i}}
%\binom{n}{i+\frac s2,i-\frac s2,\frac n2-i,\frac n2-i}/ \binom{n}{i,i,\frac n2-i,\frac n2-i}
=\frac{1}{\prod_{j=1}^{s} \left( 1+\frac{\frac{s}{2}}{\frac{n}{2}-i-s+j}\right) }
$$
decreases with $i$ as well.  Via Lemma \ref{eineq},
$$\begin{aligned}\frac{A_{\pmb{\sigma}}(m,n)}{A_{\bf{0}}(m,n)}&=
\frac{\sum_{i} {\binom{n}{i,i,\frac {n+s}2-i,\frac {n-s}2-i}} h_i}
{\sum_{i} {\binom{n}{i,i,\frac n2-i,\frac n2-i}}h_{i}}
\geq \frac{\sum_{i} {\binom{n}{i,i,\frac {n+s}2-i,\frac {n-s}2-i}}}
{\sum_{i} {\binom{n}{i,i,\frac n2-i,\frac n2-i}}}\\
&=\frac{\binom{n}{\frac {n+s}2}^2}{\binom{n}{\frac n2}^2} \ (\text{by Vandermonde Convolution})\\
&=\left(\prod_{j=1}^{s/2}\frac{\frac{n-s}2+j}{\frac n2+j}\right)^2
\geq\left(\prod_{j=1}^{s/2}\frac{\frac{n-s}2}{\frac n2}\right)^2=\left(1-\frac sn\right)^{s}
\end{aligned}$$

As $\lim\limits_{n\rightarrow\infty}\left(1-\frac sn\right)^{s}
= \lim\limits_{n\rightarrow\infty} \left(1-\frac{m}{\sqrt n}\right)^{m\sqrt n}=e^{-m^2}$,
it is implied that for sufficiently large even number $n$,
$$\frac{|A_\br(m,n)|}{|A_{\bf{0}}(m,n)|} \geq \frac{|A_{\pmb{\sigma}}(m,n)|}{|A_{\bf{0}}(m,n)|}
>(1-o(1))e^{-m^2}\triangleq C_3(m).$$
\end{proof}

\begin{proof}[Proof of Theorem \ref{supn}]
We still let $\Omega=\Omega(m,n)$ denote
the collection of all column-good $\pm1$ matrices.
By Lemma \ref{proportion},
$$|\Omega| \geq \sum_{\br\in\cR}|A_\br(m,n)|>\sum_{\br\in\cR}C_3(m){|A_{\bf{0}}(m,n)|}=|\cR|\cdot C_3(m){|A_{\bf{0}}(m,n)|}.$$
Consequently,
$$\alpha(m,n)=|A_{\bf{0}}(m,n)|<\frac{|\Omega|}{|\cR|\cdot C_3(m)}\leq \frac{\binom m{m/2}^n}{(\sqrt n)^{m-1}\cdot C_3(m)}=C_2(m)\cdot\frac{\binom m{m/2}^n}{n^{\frac{m-1}{2}}},$$
were $C_2(m)=\frac1{C_3(m)}=(1+o(1))e^{m^2}$.
\end{proof}

Thus, Theorem \ref{appro} is established as a corollary of Theorems \ref{infn} and \ref{supn}.

\section{Discussions}\label{conj}

%\subsection{Dense arrangements}

A more general case of  combinatorial rectangles with desired discrepancy is as follows.
Some but not necessarily all the squares in an $m$ by $n$ grid is filled with 1 or $-1$.
The other squares are left empty, or filled with 0 if we like to look at them that way.
The requirement is still that the absolute value of the sum of every row and of every column
must not exceed 1.
Here we are interested with the ``dense'' case. That is, assume that the total number of occupied squares is $c mn$,
where $c \in (0,1]$ is a fixed constant.  We study the situation
when $mn\rightarrow\infty$.

Obviously, a ``good'' arrangement is still ``good'' after exchanging some of its rows and
respectively, some of columns. Thus we may reorder the rows and columns so that the number of
filled squares (i.e. squares occupied by 1 or $-1$) are in decreasing order.
W.L.O.G., assume that the number of filled rows (i.e. rows with at least one filled square)
is at least as many as the number of filled columns.
Or to say, we investigate $\alpha(n_1, n_2, \cdots, n_l)$,
where $n_i$ represents the number of filled squares in row $i$
with $\sum_{i=1}^{l} n_i = cmn$ and $n_1 \geq n_2 \geq \cdots n_l$.
Thus $\alpha(n_1) = \binom{n_1}{\frac{n_1}{2}}$ or $\binom{n_1+1}{\frac{n_1 +1}{2}}$ depending
on the parity of $n_1$ (or as in Observation \ref{observe:1}
simply $\binom{2\lfloor\frac{n_1+1}{2}\rfloor}{\lfloor\frac{n_1+1}{2}\rfloor}$).
But what next? Unlike the fact $\alpha(2,n)=\alpha(1,n)$ as seen in Observation \ref{observe:1},
the formula is complicated even for the 2-row case ($l=2$).
$$
\alpha(n_1, n_2)=  \sum_{i=0}^{\min(n_2, n-n_1)} \binom{n_1-n_2+2i}{i}
\sum_{\br=(r_1, -r_1): |r_1| \leq n_2-i} \alpha_{\br}(2, n_2-i) \alpha_{{\mathbf{-r_1}}}(1, n_1-n_2+i) \alpha_{{\bf{r_1}}}(1,i).
$$
The above summation index $i$ represents the number of filled squares of row 2 that are not on the columns of
filled squares of row 1. Our rearrangement assumption forces the overlapped columns of filled squares of
the two rows must be column 1 through column $n_2-i$.
But the other $i+(n_1-n_2+i)$ columns have options to get ordered.  It would be interesting to investigate further.

%\subsection*{Acknowledgements} The authors were partially supported by NSF of China grant \#11771246.
%\subsection*{Acknowledgements} The authors wish to thank an anonymous referee for many helpful comments and suggestions.

%\begin{appendices} \end{appendices}

\bibliographystyle{plain}
\bibliography{sy}

\end{document}